\documentclass[a4paper,11pt,reqno]{article}
\usepackage{amsmath,amssymb,hyperref,amsthm,enumerate,color,stmaryrd,pgf,tikz,comment,multirow}
\usetikzlibrary{positioning, arrows.meta, bending}
\usepackage[normalem]{ulem}
\usepackage{marginnote}
\usepackage{authblk}
\usepackage[left=2.6cm,right=2.6cm,top=3cm,bottom=3cm,bindingoffset=0cm]{geometry}
\title{Nowhere-zero $3$-flows in Cayley graphs on solvable groups of twice square-free order}
\author[1,2]{Milad Ahanjideh\thanks{This work was supported in part by the Slovenian Research and Innovation Agency (ARIS), research projects N1-0391 and J1-70035.}}
\author[1,2]{Istv\'an Kov\'acs\thanks{This work was supported in part by the Slovenian Research and Innovation Agency (ARIS), research program P1-0285 and reserach projects 
N1-0391, J1-50000, N1-0428, N1-0429, J1-70035, J1-70047, J1-70046 and 
N1-0481.}} 
\affil[1]{FAMNIT, University of Primorska, Galgolja\v{s}ka 8, SI-6000 Koper, Slovenia}
\affil[2]{IAM, University of Primorska, Muzejski trg 2, SI-6000 Koper, Slovenia}
\newtheorem{thm}{Theorem}[section]
\newtheorem{lem}[thm]{Lemma}
\newtheorem{prop}[thm]{Proposition}

\newtheorem{hypo}[thm]{Hypothesis}
\newtheorem{conj}[thm]{Conjecture}
\newtheorem{claim}[thm]{Claim}

\theoremstyle{definition}
\newtheorem{exm}[thm]{Example}
\theoremstyle{remark}
\newtheorem{rem}[thm]{Remark}
\def\aut{\mathrm{Aut}}
\def\cay{\mathrm{Cay}}
\def\edge{\mathrm{E}}
\def\G{\Gamma}
\def\H{\mathcal{H}}
\def\Null{\mathrm{Null}}

\def\vertex{\mathrm{V}}
\def\Y{\mathcal{Y}}
\def\Z{\mathbb{Z}}
\newcommand{\sg}[1]{\langle {#1}\rangle}
\begin{document}
\maketitle
%---------------------------------------------------------------------------------------------------%
\begin{abstract}
We verify Tutte's $3$-flow conjecture in the class of Cayley graphs on solvable groups of 
order $2n$, where $n$ is square-free.  The proof relies on a new necessary and sufficient condition for a simple $5$-valent graph to admit a nowhere-zero 
$3$-flow in terms of a pseudoforest decomposition.  \\ [+1ex]
\noindent\emph{Keywords:}  nowhere-zero $3$-flow, pseudoforest, Cayley graph, solvable group. \\ [+1ex]
\noindent \emph{Math. Subj. Class.:} 05C21, 05C25.
\end{abstract}
%---------------------------------------------------------------------------------------------------%
\section{Introduction}\label{sec:intro}

Let $\G$ be a finite simple graph. 
An \emph{orientation} of $\G$ is a digraph $D$ whose vertex set is equal to the vertex set  
$\vertex(\G)$ and its arcs are obtained from $\G$ by endowing each edge of $\G$ with one of the two possible directions. For an edge $e$ of $\G$, let $D(e)$ denote the associated arc of $D$, and for a vertex $v \in \vertex(\G)$, denote by 
$D^+(v)$ the set of edges $\G$ that the associated arcs in $D$ have tail $v$ and by $D^-(v)$ the set of those edges that the associated arcs in $D$ have head $v$.  Let $A$ be an abelian group with the operation written additively and with identity element $0$. An \emph{$A$-flow} in 
$\G$ is a pair  $(D,\varphi)$, 
where $D$ is an orientation of $\G$ and $\varphi$ is an $A$-valued function on the edge 
set $\edge(\G)$, such that for every vertex $v \in  \vertex(\G)$,
\[
\sum_{e\in D^{+}(v)}\varphi(e)=\sum_{e\in D^{-}(v)}\varphi(e).
\]
In addition, if $\varphi(e) \ne 0$ for every $e \in \edge(\G)$, then $(D,\varphi)$ is called a \emph{nowhere-zero $A$-flow}.  A $\Z$-flow $(D,\varphi)$ is called a $k$-flow if 
$|f(e)|< k$ for every $e \in \edge(\G)$, here $\Z$ stands for the additive group 
of integers and $k$ is a fixed positive integer. 
The concept of nowhere-zero flows was introduced by Tutte~\cite{T49,T54} in connection with his work on the Four Color Conjecture. He showed that the Four Color Conjecture holds 
if and only if all bridgeless planar graphs admit a nowhere-zero $4$-flow.  
Tutte proposed three conjectures on $k$-flows in graphs which are still open. 
In this paper we are interested in his $3$-flow conjecture, which reads as follows.

\begin{conj}\label{conj}
{\rm (Tutte)} Every $4$-edge-connected graph admits a nowhere-zero $3$-flow. 
\end{conj}

This conjecture has been studied extensively. Jaeger~\cite{J} showed that every $4$-edge-connected graph admits a nowhere-zero $4$-flow and he also conjectured the existence of 
a positive integer $k$, for which every $k$-edge-connected graph admits a 
nowhere-zero $3$-flow.  Thomassen~\cite{T} showed that Jaeger's conjecture holds for 
$k=8$, which was improved by Lov\'asz et al.\,\cite{LTWZ}, who proved that 
every $6$-edge-connected graph admits a nowhere-zero $3$-flow. 

In this paper, we focus on Conjecture~\ref{conj} in the class of Cayley graphs.   
This class has attracted considerable attention~\cite{AI,FL,LL,NS,PSS,ZZ,ZhZ,ZsZ}. 
Let $G$ be a finite group with identity element $1$ and $X \subseteq G$ be a subset 
such that $1 \notin X$ and $X=X^{-1}:=\{x^{-1} \mid x \in X\}$. 
Then the \emph{Cayley graph} $\cay(G,X)$ is defined to have vertex set $G$ and for any $g, h \in G$, there is an edge joining $g$ and $h$ if and only if $g^{-1}h \in X$. The set $X$ is 
referred to as the \emph{connection set}.   
It is well-known that every connected $k$-valent Cayley graph is $k$-edge-connected~\cite{M}, and therefore, in the class of Cayley graphs, Conjecture~\ref{conj} is equivalent to saying that every connected Cayley graph on a group $G$ of valency at least $4$ admits a nowhere-zero $3$-flow. This statement is known to be true if $G$ is 
\begin{enumerate}[(i)]
\item an abelian~\cite{PSS} or nilpotent~\cite{NS} or dihedral group~\cite{FL}; or  
\item a generalized dihedral or generalized quaternion~\cite{LL}~or 
generalized dicyclic group~\cite{AI}; or 
\item a supersolvable group with a non-cyclic Sylow $2$-subgroup or a  
group with a derived subgroup of square-free order~\cite{ZsZ}; or  
\item a group of order $p^2q$~\cite{ZZ} or $8p$~\cite{ZhZ}, where $p$ and $q$ are distinct primes.    
\end{enumerate}
Note that all groups listed in (i)--(iv) are solvable. In this paper, we consider another infinite family of solvable groups.  Our main result is the following theorem.

\begin{thm}\label{main}
Let $G$ be a solvable group of order $2n$, where $n$ is square-free. 
Then every connected Cayley graph of valency at least $4$ on $G$ 
admits a nowhere-zero $3$-flow.
\end{thm}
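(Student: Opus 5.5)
We would argue by induction on $|G|$ and reduce to small valencies. Let $\G=\cay(G,X)$ be connected with $|X|\ge 4$. The first step is the standard decomposition argument. If $X$ contains a subset $Y=Y^{-1}$ such that both $\sg{Y}$-cosets give nowhere-zero $3$-flows and the remaining edges do too, the flows can be added. Concretely, we partition $X$ into inverse-closed pieces $X_1,\dots,X_k$, each of size $4$ or $5$, or of even size at least $4$. Then $\G$ is the edge-disjoint union of the spanning subgraphs $\cay(G,X_i)$, and it suffices that each of these admits a nowhere-zero $3$-flow.

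Each $\cay(G,X_i)$ is a disjoint union of copies of $\cay(\sg{X_i},X_i)$. The subgroup $\sg{X_i}$ is again solvable of order $2m$ or $m$ with $m$ square-free. Even valency is handled by an Eulerian orientation with all values $1$. Odd-order subgroups contain no involutions, so any inverse-closed set there has even size. Hence the only genuinely hard pieces are connected Cayley graphs of valency $5$, or more generally odd valency, on groups $H$ of order $2m$ with $m$ square-free. Moreover $|X|$ odd forces $X$ to contain an involution.

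So the core is: every connected $5$-valent Cayley graph $\cay(H,S)$ with $H$ solvable of order $2m$, $m$ square-free, has a nowhere-zero $3$-flow. Here the abstract announces the tool: a necessary and sufficient pseudoforest-decomposition condition for simple $5$-valent graphs. We expect it to say roughly that a $5$-valent graph has a nowhere-zero $3$-flow iff its edge set splits into a spanning subgraph with all degrees $\equiv 1$ in a suitable sense, i.e.\ a $\{1,4\}$-factor, and a part decomposing into pseudoforests/cycles with prescribed orientation properties. We will assume this criterion.

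Write $S=\{a,a^{-1},b,b^{-1},t\}$ with $t$ an involution, or with three involutions. The group structure is specific. $H$ has a normal Hall subgroup $N$ of odd order $m$: a Sylow $2$-subgroup of order $2$ gives a normal $2$-complement by Burnside. Then $H=N\rtimes\sg{t}$, and $N$ is metacyclic, $N\cong \Z_r\rtimes\Z_s$, since groups of square-free order are metacyclic. We then build the required pseudoforest decomposition explicitly from the cosets of $\sg{a}$, $\sg{b}$, and the perfect matching given by $t$. The $t$-edges together with $\sg{a}$-cycles form a $3$-valent spanning subgraph that is a union of cycles plus a matching. We take the pseudoforest parts to be unions of cycles of $\sg{a}$ and $\sg{b}$ cosets, suitably oriented.

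The main obstacle is verifying the pseudoforest condition uniformly over all such $H$ and all generating sets. Small subcases will occur where $\sg{a}$ or $\sg{b}$ is small, or where $t$ normalizes one of them. These likely need separate handling, probably via a quotient by a normal subgroup of $N$ contained in $\sg{a}$ and lifting a flow from the quotient when the quotient graph has valency at least $4$. Cases (i)–(iii) of the known results (abelian, dihedral and generalized dihedral, derived subgroup square-free) can dispose of many configurations, e.g.\ when $H'$ has square-free order. That leaves essentially the non-supersolvable-like structure, which we would attack by a direct case analysis.
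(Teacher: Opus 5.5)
There is a genuine gap, and it is in the structural step. You take the Sylow $2$-subgroup to have order $2$ and write the group as $N\rtimes\sg{t}$ with $|N|$ odd. But $n$ is only assumed square-free, not odd. For even $n$ the Sylow $2$-subgroup has order $4$; examples are $A_4$, $A_4\times\Z_p$ and $(\Z_2^2\times\Z_p)\rtimes\Z_{3k}$.

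Moreover, when $n$ is odd, $|G|=2n$ is itself square-free, so $G'$ has square-free order. Then Proposition~\ref{derived}, which you list among the known results, finishes the proof at once. Every configuration you propose to attack by case analysis is therefore already covered. Your plan does no new work and never reaches the only nontrivial case, $|G|=4m$ with $m$ odd.

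That case needs a structural reduction you do not have:
\begin{enumerate}[(1)]
\item If $4\nmid|F(G)|$, then $F(G)$ has square-free order, so it is cyclic and, by Proposition~\ref{Fit}, self-centralizing. Hence $G/F(G)$ embeds in the abelian group $\aut(F(G))$, and $G'\le F(G)$ again has square-free order.
\item So $G$ has a normal Sylow $2$-subgroup $S$ of order $4$ containing every involution. By Proposition~\ref{c-involution} the involution in $X$ may be taken non-central, which forces $S\cong\Z_2^2$ and $|G:C_G(S)|=3$.
\item Quotients by Sylow subgroups of $F(G)$, the induction hypothesis and Lemma~\ref{multi} then reduce the $5$-valent case to the explicit graphs $\G_1,\dots,\G_4$ of Lemma~\ref{reduction}.
\item These still need concrete constructions: a closed-ladder decomposition via Lemma~\ref{new-suffi} for $\G_1$, and explicit vertex partitions for Lemma~\ref{new-condition} for the others.
\end{enumerate}

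Several other steps would also need repair.
\begin{itemize}
\item Your splitting of $X$ into inverse-closed pieces of size $4$, $5$ or even size cannot handle $|X|=7$. The clean fix is that a connected Cayley graph of valency $d\ge 6$ is $6$-edge-connected, so it has a nowhere-zero $3$-flow by Lov\'asz--Thomassen--Wu--Zhang.
\item The pseudoforest criterion is not a known result you may assume, and your guessed form (an edge splitting into a $\{1,4\}$-factor plus pseudoforests) is not the usable one. What one proves is that a constant nonzero $\Z_3$-flow on a $5$-valent graph is the same as an orientation with all out-degrees in $\{1,4\}$. This translates into a vertex partition $U\cup W$ with $\G[U]$ and $\G[W]$ pseudoforests, together with transversals $U'$, $W'$ joined by a perfect matching.
\item Lifting flows from quotients must account for the fact that quotients are often Cayley multigraphs, for which the statement is false (Example~\ref{A4}). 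This is why one must track the multiplicity of $Py$ in $X/P$ and invoke Lemma~\ref{multi}.
\end{itemize}
Finally, even in the case you do treat, ``build the required decomposition explicitly'' is where all the work lies, and no construction is actually given.
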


\begin{rem}\label{rem:1}
It is worth mentioning that the result on the groups in (i)--(iii) was also shown to be true 
for the broader family of Cayley multigraphs (for the definition, see Section~\ref{sec:known}). 
However, this is not true for the group $G$ considered in Theorem~\ref{main}. An 
example of a $5$-valent Cayley multigraph on the alternating group $A_4$, which 
does not admit a nowhere-zero $3$-flow, is provided in Example~\ref{A4}. 
\end{rem}

The paper is organized as follows. In Section~\ref{sec:known}, we collect the concepts and results needed throughout the paper. Previous works follow an approach based on 
detecting certain closed ladders as subgraphs. This approach was proposed by N{\'a}n{\'a}siov{\'a} and \v{S}koviera~\cite{NS} and was later developed into the concept of a generalized closed ladder by Zhang and Zhou~\cite{ZsZ}. Using this method, we derive a new sufficient condition for a graph of valency $5$ to admit a nowhere-zero $3$-flow (see Lemma~\ref{new-suffi}).  

In Section~\ref{sec:pforests}, we introduce a new approach based on pseudoforests. 
Roughly speaking, we give a necessary and sufficient condition for a $5$-valent graph
 $\G$ to admit a nowhere-zero $3$-flow in terms of a partition of the vertex set 
 $\vertex(\G)$ into two subsets such that the subgraphs of $\G$ induced by 
these subsets are pseudoforests and there exists a matching of $\G$ between 
these subsets satisfying certain condition (see Lemma~\ref{new-condition}).   
After all preparations are completed, the proof of Theorem~\ref{main} is presented in Section~\ref{sec:proof}.   
%---------------------------------------------------------------------------------------------------%
\section{Preliminaries}\label{sec:known}

All groups and graphs in this paper are finite and unless otherwise stated, all graphs are simple. For a graph $\G$, the vertex set of $\G$ is denoted by $\vertex(\G)$ and 
the edge set of $\G$ by $\edge(\G)$.  If $\{u,v\}$ is an edge of $\G$, 
then we will write $u \sim v$. 
For subgraphs $\G_1, \G_2 \subseteq \G$ with $\vertex(\G)=\vertex(\G_1) \cup \vertex(\G_2)$ and $\edge(\G)=\edge(\G_1) \cup \edge(\G_2)$, we write $\G=\G_1 \cup \G_2$. 
For a subset $U \subseteq \vertex(\G)$, the subgraph of $\G$ induced by $U$ is denoted by $\G[U]$. If $W \subseteq \vertex(\G)$ is another subset disjoint with $U$, then $\G[U,W]$ denotes the bipartite graph having bipartition parts $U$ and $W$ and the edges $\{u,w\} \in \edge(\G)$ with $u \in U$ and $w \in W$. For a subset $E \subseteq \edge(\G)$, the subgraph of $\G$ obtained from $\G$ by deleting all edges in $E$ from $\edge(\G)$ is denoted by $\G - E$. 
If each vertex of $\G$ has even valency, then $\G$ is called \emph{even}; and a subgraph 
$\G' \subseteq \G$ is called a \emph{parity subgraph} if $\G-\edge(\G')$ is an 
even graph. 

\begin{prop}[{\cite[Lemma~2.4]{ZZ}}]\label{parity}
Let $\G$ be a graph and $k$ be a positive integer greater than $2$. If $\G$ has a parity 
subgraph admitting a nowhere-zero $k$-flow, then $\G$ admits a nowhere-zero 
$k$-flow. 
\end{prop}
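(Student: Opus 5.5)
The plan is to use the standard correspondence between flows and modular orientations, together with the additivity of flows over edge-disjoint subgraphs.

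Let $\G'$ be the parity subgraph and $(D',\varphi')$ a nowhere-zero $k$-flow on it. Since $\G-\edge(\G')$ is even, each of its components has an Eulerian circuit, so its edge set decomposes into edge-disjoint cycles $C_1,\dots,C_m$. Orient each $C_i$ cyclically and give every edge of $C_i$ the value $1$. This is a $\Z$-flow on $\G-\edge(\G')$: at each vertex of $C_i$, one arc of $C_i$ enters and one leaves. So we obtain a nowhere-zero $2$-flow $(D'',\varphi'')$ on $\G-\edge(\G')$, and hence a nowhere-zero $k$-flow, since $k>2$.

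Next, let $D$ be the orientation of $\G$ that agrees with $D'$ on $\edge(\G')$ and with $D''$ on the remaining edges, and define $\varphi=\varphi'\cup\varphi''$. The edge sets $\edge(\G')$ and $\edge(\G)\setminus\edge(\G')$ are disjoint, so the conservation condition at each vertex $v$ of $\G$ splits into the condition for $(D',\varphi')$ at $v$ plus the condition for $(D'',\varphi'')$ at $v$. If $v$ does not lie in one of the two subgraphs, the corresponding sums are empty and contribute nothing. Both parts balance, so their sum balances.

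Finally, every edge of $\G$ carries a nonzero value with absolute value less than $k$. On $\edge(\G')$ this holds because $\varphi'$ is a nowhere-zero $k$-flow; on the other edges the value is $1<k$. Therefore $(D,\varphi)$ is a nowhere-zero $k$-flow on $\G$.

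None of these steps is a real obstacle. The only points needing care are the convention that vertices absent from $\G'$ receive empty sums, and the observation that the hypothesis $k>2$ is exactly what allows the value $1$ on the even part.
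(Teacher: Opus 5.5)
Your proof is correct. The paper gives no argument of its own for this proposition (it is quoted from the literature), and superimposing the given nowhere-zero $k$-flow on $\G'$ with the all-ones flow along a cycle decomposition of the edge-disjoint even graph $\G-\edge(\G')$ is the standard proof. One inaccuracy is in your closing remark: since $|1|<2$, the value $1$ is already admissible for $k=2$, so your construction does not use the hypothesis $k>2$, and the argument (and the statement) holds verbatim for $k=2$ as well.
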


Let $G$ be a group with identity element $1$. A multiset $X \subseteq G$ is called a \emph{connection multiset} if $1 \notin X$, $X=X^{-1}=\{x^{-1} \mid x \in X\}$, and for every $x \in X$, the multiplicity of $x$ and $x^{-1}$, respectively, are equal. 
Then the \emph{Cayley multigraph} $\cay(G,X)$ is defined to have vertex set $G$ and for any $g, h \in G$, there are exactly $\ell$ edges joining $g$ and $h$ if and only if $g^{-1}h \in X$ with multiplicity $\ell$ in $X$. $A$-flows in Cayley multigraphs (more generally in finite loopless graphs with parallel edges allowed) are defined in the same way as in 
simple graphs. 

Let $\G=\cay(G,X)$ be a Cayley graph or multigraph on a group $G$. For $x \in X$, we refer to the edges of the form $\{g,gx\}$, where $g$ runs over $G$, as the \emph{$x$-edges}. 
Note that the set of $x$-edges of $\G$ coincides with the set of its $x^{-1}$-edges.     
Suppose that $N$ is a normal subgroup of $G$ with $X \cap N=\emptyset$. 
Then the multiset $X/N \subseteq G/N$, defined as $X/N=\{ Nx \mid x \in X\}$, is a 
either an inverse closed simple subset or a 
connection multiset of the quotient group $G/N$. The Cayley graph or multigraph $\cay(G/N,X/N)$ is 
called the \emph{quotient} of $\G$ with respect to $N$, denoted by $\G/N$. 

\begin{prop}[{\rm \cite[Proposition~4.1]{NS}}]\label{quo}
Let $G$ be a group and $N$ be a normal subgroup of $G$. 
Let $X$ be a connection multiset of $G$ such that $N \cap X=\emptyset$. 
If $\cay(G/N,X/N)$ admits a nowhere-zero $k$-flow, then so does $\cay(G,X)$.
\end{prop}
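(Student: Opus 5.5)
The plan is to lift a flow from the quotient to $\G=\cay(G,X)$ by a pullback along the quotient map $\pi\colon G\to G/N$, $g\mapsto Ng$. Let $(D',\varphi')$ be a nowhere-zero $k$-flow in $\G/N=\cay(G/N,X/N)$.

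\textbf{Matching edges of $\G$ with edges of $\G/N$.} Since $X\cap N=\emptyset$, every edge $\{g,gx\}$ of $\G$ maps to a pair $\{Ng,Ngx\}$ of distinct vertices, so no edge collapses to a loop. To handle multiplicities, fix for each $x\in X$, counted with multiplicity, a copy $x$ in $X/N$. The $x$-edges and the $x^{-1}$-edges coincide, so I choose the labels compatibly: pair the copies of $x$ with the copies of $x^{-1}$ in $X$, and correspondingly in $X/N$. Involutions $x=x^{-1}$ need care in this pairing.

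The edges of $\G/N$ at $Ng$ are in bijection with the labelled elements of $X/N$. Under this bijection, the edge $\{g,gx\}$ of $\G$ corresponds to the edge of $\G/N$ joining $Ng$ and $Ngx$ with label $x$. This correspondence is well defined, because the edge joining $Ng$ and $Ngx$ with label $x$ is the same as the edge joining $Ngx$ and $Ngxx^{-1}$ with label $x^{-1}$. It is also independent of the coset representative, since the edge set of $\G/N$ is defined by right multiplication on cosets. This labelled correspondence gives a map $\hat\pi\colon\edge(\G)\to\edge(\G/N)$.

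\textbf{Defining and checking the flow.} Orient each edge $e=\{g,gx\}$ of $\G$ so that its tail maps to the tail of $D'(\hat\pi(e))$, and set $\varphi(e)=\varphi'(\hat\pi(e))$. The values are then nonzero with $|\varphi(e)|<k$.

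For conservation at a vertex $g$, note that the edges of $\G$ at $g$ are exactly $\{g,gx\}$ for $x\in X$, counted with multiplicity. The map $\hat\pi$ restricts to a bijection from the edges at $g$ onto the edges at $Ng$ in $\G/N$, and it preserves whether $g$ is the head or the tail. So the net flow at $g$ equals the net flow of $\varphi'$ at $Ng$, which is zero.

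\textbf{Main obstacle.} The only delicate step is making $\hat\pi$ a genuine local bijection that respects multiplicities. This is precisely where the multiset convention on $X/N$ (multiplicity of $Nx$ equal to the number of $x\in X$ mapping to it) is used. I would handle it by the explicit labelling above, treating involutions $x=x^{-1}$ separately so that $x$-edges are not double counted. Each such edge $\{g,gx\}$ is a single edge counted once at $g$ and once at $gx$, matching a single edge of $\G/N$.
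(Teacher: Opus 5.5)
Your argument is correct and is the standard one: the projection $g\mapsto Ng$ induces a locally bijective (covering) map $\cay(G,X)\to\cay(G/N,X/N)$, and you pull the flow back along it edge by edge. This is the lifting argument behind the cited Proposition~4.1 of N{\'a}n{\'a}siov{\'a} and \v{S}koviera; the paper itself gives no proof. One point is worth stating explicitly: normality of $N$ is what makes your labelled model agree with the multigraph as defined. Indeed, $Ngx=Nh$ holds if and only if $Nx=Ng^{-1}h$, so the number of darts at $Ng$ pointing to $Nh$ equals the multiplicity of $Ng^{-1}h$ in $X/N$.
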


Let $n > 1$ be an integer. The \emph{circular ladder} $CL_n$ is the Cayley graph 
$\cay(\Z_n,\{(1,0),(n-1,0),(0,1)\})$, where $\Z_n$ denotes the 
additive group of integers modulo $n$.  The $(0,1)$-edges of $CL_n$ are called the \emph{rungs} and the remaining edges are called the \emph{rail edges}.  The \emph{M\"obius ladder} $M_n$ is the Cayley graph 
$\cay(\Z_{2n},\{1,2n-1,n\})$. The $n$-edges of $M_n$ are called the \emph{rungs} 
and the remaining edges are called the \emph{rail edges}. 
By a \emph{closed ladder} we mean a circular or M\"obius ladder.  
The method using closed ladders in the investigation of Tutte's conjecture in the 
class of Cayley graphs was initiated in~\cite{NS}, where the following results were proved. 

\begin{prop}[{\cite[Lemma~3.2]{NS}}]\label{CLn}
Let $\G$ be a closed ladder on $n$ vertices.   
\begin{enumerate}[(i)]
\item $\G$ admits a nowhere-zero $3$-flow if and only if $n$ is even. 
\item If $n$ is odd and $e$ is any rung of $\G$, then $\G$ admits a $\Z_3$-flow 
$(D,\varphi)$ such that $\varphi$ takes $0$ exactly on $e$. 
%with $\Null(\varphi)=\{e\}$.
\end{enumerate}
\end{prop}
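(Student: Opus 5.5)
We use the classical characterization for cubic graphs: a cubic graph admits a nowhere-zero $3$-flow if and only if it is bipartite. Since every closed ladder is $3$-valent, part (i) reduces to deciding when the ladder is bipartite. We then obtain (ii) by deleting a rung and suppressing the two resulting vertices of degree $2$.

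\textbf{Step 1 (cubic criterion).} First we pass to $\Z_3$: by Tutte's equivalence, a graph has a nowhere-zero $3$-flow if and only if it has a nowhere-zero $\Z_3$-flow. In a cubic graph, reverse every arc carrying the value $2$ so that all values become $1$. Conservation mod $3$ at a vertex $v$ then reads $|D^+(v)|-|D^-(v)|\equiv 0 \pmod 3$. With $|D^+(v)|+|D^-(v)|=3$, this forces every vertex to be a source or a sink, and every arc then joins a source to a sink, so the graph is bipartite. Conversely, if the graph is bipartite, orient every edge from one part to the other and put value $1$ on every edge; this is a nowhere-zero $\Z_3$-flow.

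\textbf{Step 2 (bipartiteness of the ladders).} Work directly with the Cayley descriptions of $CL_n$ and $M_n$ given above. For $CL_n$, the rails are two $n$-cycles joined by the rungs. The ladder is bipartite exactly when the rail cycles are even, that is, when $n$ is even, and in that case the $2$-colouring is $(i,j)\mapsto i+j \bmod 2$. For $M_n=\cay(\Z_{2n},\{\pm1,n\})$, the only candidate $2$-colouring is the parity of the element of $\Z_{2n}$. The rung $n$-edges respect this colouring exactly according to the parity of $n$, and the parity must be checked against the indexing convention used for "$n$" in the statement. Every cycle of a closed ladder is a combination of rail cycles and $4$-cycles or rung-crossing cycles, so bipartiteness is decided by this single parity. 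Combining with Step 1 gives (i). I expect this bookkeeping — making sure the parameter in the statement matches the parity condition for both ladder types simultaneously — to be the only delicate point.

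\textbf{Step 3 (part (ii)).} Let $n$ be odd and let $e=\{u,v\}$ be a rung. Delete $e$ and suppress $u$ and $v$, which now have degree $2$; each is replaced by one edge joining its two rail neighbours. The result is a cubic closed ladder $\G'$ with one rung fewer, in which the two rail segments through $u$ and $v$ are each shortened by one. This flips the parity of the relevant odd cycle through the deleted rung, so $\G'$ is bipartite. By Step 1, $\G'$ has a nowhere-zero $\Z_3$-flow with all values $1$. Subdivide back: give both edges at $u$ the orientation and value of the edge that replaced them, and likewise at $v$. Then give $e$ the value $0$. Conservation holds at $u$ and $v$ because the flow just passes through them, and it holds at every other vertex as it did in $\G'$. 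The resulting $\Z_3$-flow on $\G$ vanishes exactly on $e$. The case where $\G'$ would be degenerate (very small $n$) is checked by hand.
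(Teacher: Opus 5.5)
The paper gives no proof here; it quotes the result from [NS]. Your route is the standard one: cubic graphs have a nowhere-zero $\Z_3$-flow iff they are bipartite, then delete a rung and suppress its ends. For circular ladders it is complete. $CL_n$ is bipartite iff $n$ is even, and deleting a rung of $CL_n$ and suppressing gives $CL_{n-1}$, which is bipartite when $n$ is odd. This is also the only case the paper uses, namely $\Sigma_i\cong CL_t$ with $t$ odd in Lemma~\ref{new-suffi}.

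The gap is the Möbius case, which you postpone as ``bookkeeping'' without ever committing to a parity. It is not bookkeeping. With $M_n=\cay(\Z_{2n},\{1,2n-1,n\})$ ($n$ rungs, $2n$ vertices), the parity colouring shows that $M_n$ is bipartite iff $n$ is \emph{odd}. This is the opposite of $CL_n$, so no single parity condition on the number of rungs or vertices covers both families.

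As a result, both parts of the statement, read literally, are false for Möbius ladders:
\begin{enumerate}[(a)]
\item $M_3\cong K_{3,3}$ has a nowhere-zero $3$-flow although $3$ is odd.
\item $M_2\cong K_4$ and $M_4$ (the Wagner graph) have no nowhere-zero $3$-flow, although $n$ is even.
\item $M_3$ has no $\Z_3$-flow vanishing exactly on a rung. Such a flow would pass straight through the two ends of the rung, so it would induce a nowhere-zero $\Z_3$-flow on the suppressed graph $M_2\cong K_4$. That graph is cubic and not bipartite, contradicting your Step~1.
\end{enumerate}

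The same fact breaks your Step~3 for $M_n$. Deleting a rung of $M_n$ and suppressing yields $M_{n-1}$. The parity switch therefore gives a bipartite graph only when $M_n$ was non-bipartite, i.e.\ when $n$ is even, not when $n$ is odd. Likewise ``on $n$ vertices'' cannot be taken literally: every closed ladder has an even number of vertices, yet $CL_3$ has no nowhere-zero $3$-flow.

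What your Steps~1--3 actually prove, and what should be stated, is the following.
\begin{enumerate}[(i)]
\item A closed ladder admits a nowhere-zero $3$-flow iff it is bipartite, i.e.\ it is $CL_n$ with $n$ even or $M_n$ with $n$ odd.
\item Every non-bipartite closed ladder ($CL_n$ with $n$ odd, or $M_n$ with $n$ even) has, for each rung $e$, a $\Z_3$-flow whose zero set is exactly $\{e\}$.
\end{enumerate}
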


\begin{prop}[{\cite[Theorem~3.3]{NS}}]\label{c-involution}
Let $\cay(G,X)$ be a Cayley multigraph of valency at least $4$ such that 
$X$ contains a central involution of $G$. Then $\cay(G,X)$ admits a nowhere-zero 
$3$-flow. 
\end{prop}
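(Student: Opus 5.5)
We would reduce to finding a suitable parity subgraph and then apply Proposition~\ref{parity} together with Proposition~\ref{CLn}(i). Let $\G=\cay(G,X)$, let $d=|X|\ge 4$ (counted with multiplicity), and let $z\in X$ be the given central involution. The $z$-edges $\{g,gz\}$ form a perfect matching of $\G$; they will play the role of rungs. The case split is on the parity of $d$.

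\emph{Case $d$ even.} Every vertex has even valency, so $\G$ is an even graph. Its edge set decomposes into cycles, and a $2$-cycle in the multigraph sense is a pair of parallel edges. Orienting each cycle cyclically and assigning the value $1$ to all of its edges gives a nowhere-zero $2$-flow, which is in particular a nowhere-zero $3$-flow. (Equivalently, the empty subgraph is a parity subgraph.)

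\emph{Case $d$ odd, so $d\ge 5$.} We look for a spanning cubic subgraph $\G'$ that contains all $z$-edges and whose components are closed ladders on an even number of vertices. The complement $\G-\edge(\G')$ is then $(d-3)$-regular with $d-3$ even, so $\G'$ is a parity subgraph. By Proposition~\ref{CLn}(i), $\G'$ has a nowhere-zero $3$-flow, and Proposition~\ref{parity} then finishes the proof. We choose $\G'$ as follows.

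\begin{enumerate}[(a)]
\item \emph{Some $x\in X$ has $x\ne x^{-1}$.} Take one copy each of the $x$-, $x^{-1}$- and $z$-edges. Since $z$ is central, each left coset of $\sg{x,z}$ carries the rails $g,gx,gx^2,\dots$ and the rungs $gx^i\sim gx^iz$.
\begin{itemize}
\item If $z\notin\sg{x}$, the component is the circular ladder $CL_{m}$ with $m=|x|$, on $2|x|$ vertices.
\item If $z\in\sg{x}$, then $|x|$ is even, $z=x^{|x|/2}$, and the component is the M\"obius ladder $M_{|x|/2}$ on $|x|$ vertices.
\end{itemize}
In both cases the number of vertices is even.
\item \emph{All elements of $X$ are involutions.} Since $d\ge5$, we can pick two further entries $x,y$ of $X$ other than the chosen copy of $z$.
\begin{itemize}
\item If $x\ne y$ as group elements, the $x$- and $y$-edges form alternating cycles on the cosets of the dihedral group $\sg{x,y}$ of order $2m$, where $m=|xy|$. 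If $z\notin\sg{x,y}$, adding the $z$-rungs gives a circular ladder on $4m$ vertices. If $z\in\sg{x,y}$, then $z$ is a central element of this dihedral group, so $m$ is even and $z=(xy)^{m/2}$, which gives a M\"obius ladder on $2m$ vertices.
\item If $x=y$, or if $x$ or $y$ equals $z$, we are using repeated copies. The components are then doubled $4$-cycles or $2$-vertex multigraphs. These are the degenerate ladder $CL_2$, or a digon with a triple edge, which carries the flow $1,1,-2$. Each has an even number of vertices and admits a nowhere-zero $3$-flow directly.
\end{itemize}
\end{enumerate}

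We expect the main obstacle to be bookkeeping rather than a real difficulty. One must check that in each case the components are genuinely closed ladders with an even number of vertices. In particular, when $z$ lies in the subgroup generated by the rail elements, one must see that the rungs join antipodal vertices of the rail cycle, giving a M\"obius rather than a circular ladder. One must also treat carefully the degenerate multigraph situations created by multiplicities in the connection multiset.
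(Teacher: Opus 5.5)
There is a genuine gap in the odd-valency case, at the step ``the number of vertices is even, so by Proposition~\ref{CLn}(i) each component has a nowhere-zero $3$-flow''. Every closed ladder has an even number of vertices, so under your reading Proposition~\ref{CLn}(i) would say that every closed ladder has a nowhere-zero $3$-flow. That is false. A closed ladder is cubic, and a cubic graph admits a nowhere-zero $3$-flow if and only if it is bipartite (the fact used in Example~\ref{A4}). The parameter in Proposition~\ref{CLn} is the number of rungs; compare its use for $CL_t$ with $t$ odd, a graph on $2t$ vertices, in Lemma~\ref{new-suffi}. The correct criterion is that $CL_m$ is bipartite if and only if $m$ is even, and $M_m$ is bipartite if and only if $m$ is odd.

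Hence your parity subgraph has no nowhere-zero $3$-flow in three situations. First, when $|x|$ is odd in (a): you get $CL_{|x|}$, e.g.\ the triangular prism $CL_3$. Second, when $z=x^{|x|/2}$ with $4\mid |x|$: you get $M_{|x|/2}$ with an even number of rungs, e.g.\ $M_2\cong K_4$. Third, in every instance of the subcase $z\in\sg{x,y}$ of (b): there $m$ is even, so $M_m$ is never bipartite. A better choice of elements does not rescue the argument in general. For $G=\Z_{12}$, $X=\{3,9,6,4,8\}$ and $z=6$, the only choices available in (a) are $x\in\{3,9\}$, giving copies of $K_4$, and $x\in\{4,8\}$, giving triangular prisms. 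So none of the parity subgraphs you allow carries a nowhere-zero $3$-flow, although $\cay(G,X)$ itself does.

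So the non-bipartite ladders are not a matter of bookkeeping; they are where the real content of the proposition lies, and your proposal has no mechanism for them. An additional idea is needed. For example, take on each odd ladder the $\Z_3$-flow of Proposition~\ref{CLn}(ii), which vanishes on a single rung. Then repair these rungs with flows on further subgraphs built from the remaining elements of $X$, while preserving the parity condition, in the spirit of the proof of Lemma~\ref{new-suffi}.

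What you do have is correct: the even-valency case, and the subcases that produce bipartite ladders. These are $CL_{|x|}$ with $|x|$ even, $M_{|x|/2}$ with $|x|/2$ odd, $CL_{2m}$ when $z\notin\sg{x,y}$, and the degenerate multigraph components.
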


\begin{prop}[{\cite[Theorem~4.3]{NS}}]\label{nilpotent}
Every Cayley graph of valency at least $4$ on a nilpotent group admits a nowhere-zero 
$3$-flow.
\end{prop}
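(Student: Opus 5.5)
The plan is to prove the stronger statement for Cayley \emph{multigraphs}: if $G$ is nilpotent and $X$ is a connection multiset of $G$ with $|X|\ge 4$, then $\cay(G,X)$ admits a nowhere-zero $3$-flow. We argue by induction on $|G|$. Strengthening to multigraphs is what makes the induction close, because quotients of simple Cayley graphs are in general only multigraphs. We do not need to assume connectedness, since flows can be built separately on each component.

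\textbf{Even valency.} If $|X|$ is even, every vertex of $\Gamma=\cay(G,X)$ has even degree. Each component of $\Gamma$ is then Eulerian. Orient every component along an Euler tour and put $\varphi\equiv 1$. At every vertex the in-degree equals the out-degree, so this is a nowhere-zero $2$-flow, hence also a nowhere-zero $3$-flow.

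This case covers all groups of odd order. Indeed, when $|G|$ is odd, $X$ contains no involution. Since $X=X^{-1}$ with equal multiplicities of $x$ and $x^{-1}$, the elements of $X$ pair off as $x\ne x^{-1}$, so $|X|$ is even. This also serves as the base of the induction, including $|G|=1$.

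\textbf{Odd valency.} Now suppose $|X|$ is odd. Then $|G|$ is even. Since $G$ is nilpotent, $G=P\times Q$ with $P$ its nontrivial Sylow $2$-subgroup. The centre of a nontrivial $2$-group is nontrivial, and $Z(P)\le Z(G)$, so $G$ contains a central involution $z$. Let $N=\langle z\rangle$, which is normal in $G$. There are two cases.
\begin{enumerate}[(a)]
\item If $z\in X$, then Proposition~\ref{c-involution} applies directly, since the valency is at least $4$.
\item If $z\notin X$, then $X\cap N=\emptyset$, as $1\notin X$. The quotient $\G/N=\cay(G/N,X/N)$ is therefore a loopless Cayley multigraph on $G/N$. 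Its connection multiset $X/N$ has the same size $|X|\ge 4$, where parallel edges arise when both $x$ and $xz$ lie in $X$. The group $G/N$ is nilpotent and $|G/N|<|G|$. By induction, $\G/N$ admits a nowhere-zero $3$-flow, and Proposition~\ref{quo} lifts it to $\cay(G,X)$.
\end{enumerate}
Specialising to simple Cayley graphs gives Proposition~\ref{nilpotent}.

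The delicate step is the bookkeeping in the quotient. We must check that $X/N$ is a genuine connection multiset of $G/N$, meaning $1\notin X/N$ and the multiplicities of $Nx$ and $Nx^{-1}$ agree. This follows from $X\cap N=\emptyset$ and from $z$ being central, since $(Nx)^{-1}=Nx^{-1}$. We must also check that the valency is preserved when counted with multiplicity, which is exactly why the induction hypothesis has to be stated for multigraphs. If a central involution did not exist, the argument would instead need ladder decompositions (Proposition~\ref{CLn}) together with Proposition~\ref{parity}. Nilpotency removes this difficulty through the centre of the Sylow $2$-subgroup.
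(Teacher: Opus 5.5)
Your proof is correct. The paper does not prove this statement; it imports it directly from \cite[Theorem~4.3]{NS}, so the only comparison available is with that source. Your argument is a valid derivation from the two other results the paper quotes from \cite{NS}:
\begin{itemize}
\item an induction on $|G|$ over Cayley multigraphs;
\item an Euler-tour flow when the valency is even;
\item for odd valency, a central involution $z$ that either lies in $X$, so Proposition~\ref{c-involution} applies, or is factored out, with the flow lifted back by Proposition~\ref{quo}.
\end{itemize}
As far as I can tell, this is essentially the argument behind the cited theorem.

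Two small remarks. First, equal multiplicities of $Nx$ and $Nx^{-1}$ in $X/N$ need only that $N$ is normal, not that $z$ is central. Inversion maps $Nx$ onto $x^{-1}N=Nx^{-1}$ and preserves multiplicities in $X$. Centrality is what you use to guarantee that such an $N$ exists.

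Second, you only use that $G=P\times Q$ with $P$ the Sylow $2$-subgroup. This gives $Z(P)\le Z(G)$, and the property passes to $G/\langle z\rangle\cong (P/\langle z\rangle)\times Q$. So your proof actually covers every group whose Sylow $2$-subgroup is a direct factor, not only nilpotent groups.
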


The concept of a generalized closed ladder was introduced in~\cite{ZsZ} and it was used to 
prove the following result. 

\begin{prop}[{\cite[Theorem~1.3]{ZsZ}}]\label{derived}
Every Cayley graph of valency at least $4$ on a group whose derived subgroup 
is of square-free order admits a nowhere-zero $3$-flow.
\end{prop}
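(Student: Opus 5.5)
The plan is a minimal counterexample argument. I would prove the stronger statement for Cayley \emph{multigraphs} $\cay(G,X)$ with $G'$ of square-free order, since quotients by normal subgroups produce multigraphs and the induction has to stay inside one class.

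\textbf{Step 1: reduce to valency $5$.} If the valency is even, $\cay(G,X)$ is an even graph. It therefore has a nowhere-zero $\Z_2$-flow, and hence a nowhere-zero $3$-flow. So assume the valency is odd and at least $5$; then $X$ contains an involution $t$. I would select an inverse-closed sub-multiset $Y\subseteq X$ with $|Y|=5$ and $t\in Y$. Then $X\setminus Y$ is inverse-closed of even size, so $\cay(G,X\setminus Y)$ is even. Hence $\cay(G,Y)$ is a parity subgraph, and by Proposition~\ref{parity} it suffices to find a nowhere-zero $3$-flow in $\cay(G,Y)$. This graph is a disjoint union of copies of $\cay(\sg{Y},Y)$. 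Subgroups inherit the hypothesis, since $\sg{Y}'\le G'$ has square-free order. So I may assume $X=Y$ is $5$-valent and generates $G$. If $t$ is central, Proposition~\ref{c-involution} finishes the argument.

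\textbf{Step 2: quotient by a small normal subgroup.} Groups whose derived subgroup has square-free order are metacyclic-like: $G'$ is a product of cyclic groups of prime order and $G/G'$ is abelian. Take a minimal normal subgroup $N\le G'$; it is cyclic of prime order $p$. If $N\cap X=\emptyset$, then $\cay(G/N,X/N)$ is a $5$-valent Cayley multigraph on a group whose derived subgroup $G'/N$ has square-free order. By minimality it has a nowhere-zero $3$-flow, and Proposition~\ref{quo} lifts this flow to $\cay(G,X)$. If $G'=1$, the group is abelian and the involution is central, which is already handled.

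\textbf{Step 3: the generalized ladder case (main obstacle).} The remaining case is that every such $N$ meets $X$, so $X\supseteq\{a,a^{-1}\}$ with $\sg{a}=N\trianglelefteq G$ of prime order $p$. I expect this to be the hard part.

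The $a$-edges form a union of $p$-cycles, namely the cosets of $N$. The other three elements of $X$ (the involution $t$ together with either a pair $b^{\pm1}$ or two further involutions) permute these cosets. Since $t$ normalizes $N$, the $a$-edges and $t$-edges together form a disjoint union of closed ladders, possibly generalized with $t$ acting on $N$ by inversion. I would then do a case analysis on the action of $t$ and of the remaining generators on $N$.

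When the ladders have even order, Proposition~\ref{CLn}(i) gives flows on them, which I would combine with a cycle-type flow on the remaining edges. When they have odd order (for instance $p$ odd and $t\in N$-cosets paired trivially), I would use Proposition~\ref{CLn}(ii): put a $\Z_3$-flow on each ladder vanishing on exactly one rung. I would then repair these zero rungs using a circuit through the remaining $b$-edges or involution edges, arranged consistently along the cosets of $\sg{a,t}$ in $G$.

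The delicate point is choosing the zero rungs coherently, so that the correcting circuits cover each zero exactly once and create no new zeros. I would attempt this by pairing cosets via the quotient $G/\sg{a,t}$, using the abelianness of $G/G'$ to make the pairing well defined.
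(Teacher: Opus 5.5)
Your Steps~1 and~2 are fine. The even-valency case, the reduction via Proposition~\ref{parity} to a $5$-element generating multiset, the central-involution case, and the quotient by a prime-order normal subgroup of $G'$ missing $X$ are the same kind of reductions used in Section~\ref{sec:proof}. Working in the multigraph class is also legitimate, since the statement is true there (cf.\ Remark~\ref{rem:1}). However, the paper does not prove this proposition. It imports it from Zhang and Zhou, whose argument rests on generalized closed ladders, and that argument is needed exactly for your Step~3. There you offer intentions rather than a proof, so there is a genuine gap at the point where all the content of the theorem lies.

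\textbf{Where Step~3 breaks down.} A normal subgroup of order $2$ is central, so $p$ is odd in Step~3. Hence $\Sigma=\cay(G,\{a,a^{-1},t\})$ is a disjoint union of $|G:\sg{a,t}|$ copies of $CL_p$ with $p$ odd, whether $t$ centralizes or inverts $a$. Your ``even order'' branch therefore never occurs, and no ladder has a nowhere-zero $3$-flow.

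Everything then hinges on an explicit correcting subgraph $\Lambda$ as in Lemma~\ref{new-suffi}, with three properties:
\begin{itemize}
\item $\Sigma\cup\Lambda$ must be a parity subgraph. This forces the $b$-edges of $\Lambda$ to form whole $\sg{b}$-cosets, not arbitrary circuits.
\item $\Lambda$ may meet the ladders only in rungs.
\item Its $\Z_3$-flow must vanish only on shared rungs, and never on all shared rungs of one ladder.
\end{itemize}
Whether and how this can be achieved depends on how $b$ acts on $N$, on $t$ and on the set of ladders (for instance, the order of $b$ modulo $\sg{a,t}$). You do not analyse any of this, nor the subcase where the two remaining elements of $X$ are involutions.

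The coherence device you suggest also fails as stated. Even when $\sg{a,t}\ge G'$, so that $G/\sg{a,t}$ is an abelian group, the number of ladders can be odd. An example is $G=D_p\times\Z_q$ with $q$ odd, $a,t\in D_p$ and $b$ central of order $q$; here the ladders cannot be paired off. What works in that example is different: take $\Lambda$ to be two copies of $CL_q$ built on four $\sg{b}$-cosets, so that each ladder shares two rungs with $\Lambda$, and place the single zeros from Proposition~\ref{CLn}(ii) in different ladders. A construction of this kind that works for every possible action of $b$ is exactly what is missing.
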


Regarding terminology and notation in group theory, we follow the book~\cite{Hbook}. In particular, if $x, y \in G$, then $|x|$ denotes 
the \emph{order} of $x$, and $x^y$ is written for the \emph{conjugate} $y^{-1}xy$. 
The \emph{semidirect product} of $G$ with another group $H$ is denoted by $G \rtimes H$. 
The \emph{center}, \emph{derived subgroup} and \emph{Fitting subgroup} of $G$ are denoted by $Z(G), G^\prime$ and $F(G)$, respectively.  
For a subgroup $H \le G$, the \emph{centralizer} of 
$H$ in $G$ is denoted by $C_G(H)$. We shall use the following facts.

\begin{prop}[{\cite[Theorem~1.4.5]{Hbook}}]\label{N/C}
If $N$ is a normal subgroup of $G$, then $G/C_G(N)$ is isomorphic to a subgroup of 
$\aut(N)$.
\end{prop}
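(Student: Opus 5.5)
The natural route is to realize $G/C_G(N)$ as the image of a homomorphism into $\aut(N)$ and apply the first isomorphism theorem. For each $g \in G$, define $\gamma_g \colon N \to N$ by $\gamma_g(x)=x^g=g^{-1}xg$.

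First, I would check that each $\gamma_g$ is an automorphism of $N$.
\begin{enumerate}[(i)]
\item $\gamma_g$ maps $N$ into $N$, because $N$ is normal in $G$.
\item $\gamma_g$ is a homomorphism, since $(xy)^g=x^gy^g$.
\item $\gamma_g$ is bijective, since $\gamma_{g^{-1}}$ is its two-sided inverse.
\end{enumerate}

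Next, I would consider the map $\Phi\colon G\to \aut(N)$ given by $g\mapsto \gamma_g$. With the convention $x^g=g^{-1}xg$ we have $x^{gh}=(x^g)^h$, so
\[
\gamma_{gh}=\gamma_h\circ\gamma_g .
\]
Thus $\Phi$ is an anti-homomorphism when automorphisms are composed as functions. There are two standard fixes, and I would use either one:
\begin{itemize}
\item[(a)] compose maps from left to right, writing automorphisms as right operators, which is the convention in the cited book; or
\item[(b)] use $g\mapsto\gamma_{g^{-1}}$ instead, which is a genuine homomorphism and has the same kernel.
\end{itemize}

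Then I would compute the kernel. We have $\gamma_g=\mathrm{id}_N$ exactly when $g^{-1}xg=x$ for all $x\in N$, that is, when $g$ centralizes $N$. Hence the kernel is $C_G(N)$. In particular, this shows that $C_G(N)$ is normal in $G$, so the quotient $G/C_G(N)$ makes sense.

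Finally, the first isomorphism theorem gives $G/C_G(N)\cong \Phi(G)\le \aut(N)$, which is the claim.

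There is no real obstacle here. The only point needing care is the order-of-composition convention for conjugation, which is handled by (a) or (b).
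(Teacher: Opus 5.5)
Your proof is correct: you realize $G/C_G(N)$ as the image of the conjugation map $G\to\aut(N)$, whose kernel is $C_G(N)$, and apply the first isomorphism theorem, and your fix $g\mapsto\gamma_{g^{-1}}$ for the composition order is valid. This is the standard argument for the result quoted from the textbook; the paper gives no proof of its own and simply cites it.
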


\begin{prop}[{\cite[Theorem~3.4.2 (b)]{Hbook}}]\label{Fit}
%[{\cite[Theorem~6.1.3]{Gbook}}]
If $G$ is a solvable group, then $C_G(F(G)) \le F(G)$. 
\end{prop}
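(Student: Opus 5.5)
The plan is to argue by contradiction, using only that $F(G)$ is the largest normal nilpotent subgroup of $G$ and that a nontrivial normal subgroup of a solvable group contains a nontrivial abelian normal subgroup. Write $F=F(G)$ and $C=C_G(F)$.

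First I will record the normality facts. Since $F$ is characteristic in $G$, every automorphism of $G$ (in particular every inner one) maps $C$ onto $C_G(F)=C$, so $C \trianglelefteq G$. Hence $Z:=C\cap F$ is normal in $G$, and since $C$ centralizes $F$ we get $Z \le Z(F)$. Conversely $Z(F)\le C\cap F$, so $Z=Z(F)$; this identification is convenient but not actually needed below.

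Now suppose that $C\not\le F$, so that $C/Z$ is a nontrivial normal subgroup of the solvable group $G/Z$. The last nontrivial term of the derived series of $C/Z$ is characteristic in $C/Z$, hence normal in $G/Z$, and it is abelian. So there is a subgroup $A$ with $Z<A\le C$, $A\trianglelefteq G$ and $A/Z$ abelian. The key step is to show that $A$ is nilpotent. Because $A\le C$, the subgroup $A$ centralizes $F\supseteq Z$, so $Z\le Z(A)$. Because $A/Z$ is abelian, $A'\le Z\le Z(A)$. Therefore $[A,A,A]=1$, and $A$ is nilpotent of class at most $2$.

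Since $A$ is a normal nilpotent subgroup of $G$, it is contained in $F$. Combined with $A\le C$, this gives $A\le C\cap F=Z$, contradicting $Z<A$. Hence $C_G(F(G))\le F(G)$. There is no real obstacle here. The only point requiring care is to pass to the quotient by $Z$, rather than looking for an abelian normal subgroup inside $C$ itself. That choice is what lets the centralizing property make $A$ a central extension of an abelian group, and hence nilpotent.
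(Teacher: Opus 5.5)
Your proof is correct; the paper gives no proof of its own and only cites a textbook, and your argument is the standard one found there. You pass to $G/Z(F(G))$, take a nontrivial abelian normal subgroup inside $C_G(F(G))/Z(F(G))$, note that its preimage is a central extension of an abelian group and hence a normal nilpotent subgroup, and conclude that it lies in $F(G)\cap C_G(F(G))=Z(F(G))$. The only difference from the usual textbook version is cosmetic: you take the last nontrivial term of the derived series where the textbook takes a minimal normal subgroup of $G/Z(F(G))$ contained in $C_G(F(G))/Z(F(G))$.
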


We conclude the section with a lemma that provides a sufficient condition for a $5$-valent simple graph to admit a nowhere-zero $\Z_3$-flow. Tutte showed that the existence of $\Z_k$-flow in a graph $\G$ is equivalent to the existence of $k$-flow (see, e.\,g.~\cite[Theorem~21.3]{BMbook}). 
In this paper, we work with $\Z_3$-flows rather than $3$-flows. 
The proof of the lemma illustrates the use of 
circular ladders in the study of $3$-flows in graphs. Before stating the lemma, we 
introduce some additional notation. 

Let $(D,\varphi)$ be an $A$-flow  
in a graph $\G$, where $A$ is an abelian group with identity element $0$. 
The \emph{reverse orientation} $D^t$ of $\G$ obtained from $\G$ by 
reversing each arc of $D$. 
The function $-\varphi : \edge(\G) \to A$ is defined as 
$(-\varphi)(e)=-\varphi(e)$ for every $e\in \edge(\G)$. Clearly, 
both pairs $(D^t,\varphi)$ and $(D,-\varphi)$ are $A$-flows. 
The subset $\Null(\varphi) \subseteq \edge(\G)$ is defined as 
\[
\Null(\varphi)=\{ e \in \edge(\G) \mid \varphi(e)=0\}.
\] 
Finally, for integers $a < b$, throughout the paper we use the symbol $[a,b]$ to denote 
the set $\{a,a+1,\ldots,b\}$.

\begin{lem}\label{new-suffi}
Let $\G$ be a simple graph of order $n$ and valency $5$ and suppose that it contains  subgraphs $\Sigma$ and $\Lambda$ satisfying the following conditions.
\begin{enumerate}[(i)]
% (i)
\item $\Sigma$ is a spanning subgraph of $\G$ with components 
$\Sigma_1,\ldots,\Sigma_m$, and  there exists an odd integer $t > 1$ such that 
$\Sigma_i \cong CL_t$ for each $i \in [1,m]$ (hence $n=2tm$).
% (ii)
\item $\Sigma \cup \Lambda$ is a parity subgraph of $\G$.  
% (iii)
\item For each $i \in [1,m]$, 
$|\edge(\Lambda) \cap \edge(\Sigma_i)| \in \{1,2\}$ and 
every edge in $\edge(\Lambda) \cap \edge(\Sigma_i)$ is a rung of $\Sigma_i$.
% (iv)
\item $\Lambda$ admits a $\Z_3$-flow $(D,\varphi)$ such that 
\begin{enumerate}[(a)]
\item $\Null(\varphi) \subseteq \edge(\Lambda) \cap \edge(\Sigma)$. 
\item For each $i \in [1,m]$, $\edge(\Lambda) \cap \edge(\Sigma_i) \not\subset \Null(\varphi)$.
\end{enumerate}
\end{enumerate}
Then $\G$ admits a nowhere-zero $\Z_3$-flow. 
\end{lem}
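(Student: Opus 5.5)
By Proposition~\ref{parity}, it suffices to show that the parity subgraph $\Sigma\cup\Lambda$ admits a nowhere-zero $\Z_3$-flow. The plan is to build this flow as a sum of $\Z_3$-flows: the flow $(D,\varphi)$ on $\Lambda$ from hypothesis (iv), plus a suitable flow on each ladder $\Sigma_i$. First, fix one orientation $D_0$ of $\Sigma\cup\Lambda$. Every $\Z_3$-flow on a subgraph can be rewritten with respect to $D_0$ by negating its values on reversed edges, and extended by $0$ to the remaining edges. Sums of flows written this way are again flows. So it is enough to find, for each $i$, a $\Z_3$-flow $\psi_i$ on $\Sigma_i$ such that $\varphi+\sum_i\psi_i$ vanishes nowhere on $\edge(\Sigma\cup\Lambda)$.

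Since the components $\Sigma_i$ are pairwise edge-disjoint, the choices of the $\psi_i$ are independent. Edges of $\Lambda$ outside $\Sigma$ are unaffected by the $\psi_i$, and they are nonzero by (iv)(a). So the work is local to a fixed $i$. Let $R_i=\edge(\Lambda)\cap\edge(\Sigma_i)$; by (iii) this is a set of one or two rungs. By (iv)(b), some rung $e\in R_i$ has $\varphi(e)\neq 0$. Since $t$ is odd, Proposition~\ref{CLn}(ii) provides a $\Z_3$-flow $\psi$ on $\Sigma_i\cong CL_t$ that is zero exactly on $e$. Adjusting the orientation to $D_0$ does not change $\Null(\psi)$, and we may replace $\psi$ by $-\psi$ freely.

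On each edge $f$ of $\Sigma_i$, the combined value is $\varphi(f)\pm\psi(f)$. There are three kinds of edges:
\begin{itemize}
\item On $e$ itself, the value is $\varphi(e)\neq 0$.
\item On edges of $\Sigma_i$ not in $\Lambda$, the value is $\pm\psi(f)\neq 0$.
\item The remaining case is when $R_i=\{e,e'\}$ with $e'\neq e$. On $e'$, the value is $\varphi(e')+\epsilon\psi(e')$, where $\epsilon=\pm1$ is the sign we choose for $\psi$ and $\psi(e')\neq 0$.
\end{itemize}
If $\varphi(e')=0$, either sign works. If $\varphi(e')\neq 0$, then $\varphi(e')+\psi(e')$ and $\varphi(e')-\psi(e')$ cannot both vanish in $\Z_3$, because $2\psi(e')\neq 0$. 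Choosing $\epsilon$ accordingly makes the value on $e'$ nonzero.

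Summing over all $i$ gives a nowhere-zero $\Z_3$-flow on $\Sigma\cup\Lambda$, and Proposition~\ref{parity} then yields one on $\G$. The only delicate point is this sign choice on a second common rung, which uses that $\Z_3$ has odd order. Everything else is bookkeeping with orientations, so no step should be a real obstacle.
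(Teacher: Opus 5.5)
Your proof is correct and follows essentially the same route as the paper. Both reduce to $\Sigma\cup\Lambda$ via Proposition~\ref{parity} and add to $\varphi$, on each $\Sigma_i$, a flow from Proposition~\ref{CLn}(ii) that vanishes only on a common rung where $\varphi\neq 0$, then choose the sign so that a second common rung, if present, stays nonzero. The only difference is bookkeeping: you measure everything against one fixed orientation $D_0$, whereas the paper reverses each $D_i$ and orients the null edge so that $D_i$ agrees with $D$ on the common rungs.
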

\begin{proof}
Our goal is to construct a nowhere-zero $\Z_3$-flow $(D',\varphi')$ in 
$\Sigma \cup \Lambda$. 
The lemma will then follow from (ii) and Proposition~\ref{parity}. 

To achieve this goal, we define first a $3$-flow $(D_i,\varphi_i)$ of the component 
$\Sigma_i$ for every $i \in [1,m]$. 
According to case (b) in (iv), there is a common edge 
$e_i$ of $\Sigma_i$ and $\Lambda$ for which $\varphi(e_i) \ne 0$. 
In view of (iii), we may assume without loss of generality that 
the remaining common edges of 
$\Sigma$ and $\Lambda$, if there is any, can be listed as 
$e'_1,\ldots,e'_l$, where $l \in [1,m]$ and $e'_i$ lies in the component $\Sigma_i$ for every 
$i \in [1,l]$. 

Fix $i \in [1,m]$. Since $\Sigma_i \cong CL_t$ and $t$ is odd, Proposition~\ref{CLn} guarantees the existence of a $3$-flow $(D_i,\varphi_i)$ in $\Sigma_i$ with 
$\Null(\varphi)=\{e_i\}$. If $i \le l$ and $D_i(e'_i) \ne D(e'_i)$, then replace 
$D_i$ with $D_i^t$.  Then, as $\varphi_i(e_i)=0$, we may choose the orientation of the edge 
$e_i$ in $D_i$ so that $D_i(e_i)=D(e_i)$. If $i \le l$ and $\varphi(e'_i)+\varphi_i(e'_i)=0$, then 
replace  $\varphi_i$ with $-\varphi_i$. As $\varphi_i(e'_i) \ne 0$, $\varphi(e'_i)-\varphi_i(e'_i) \ne 0$. As a result, we have constructed $\Z_3$-flows $(D_i,\varphi_i)$ in $\Sigma_i$, $i \in [1,m]$, which have the following properties:
\begin{align}
\Null(\varphi_i)=\{e_i\} &~~(i \in [1,m]), &  \label{eq:null} \\  
D_i(e_i)=D(e_i),~D_j(e'_j)=D(e'_j) & ~~(i \in [1,m], j \in [1,l]), \label{eq:Di} \\ 
\varphi_i(e'_i)+\varphi(e'_i) \ne 0 &~~(i \in [1,l]). \label{eq:varphii}
\end{align}

Observe that the orientations $D$ and $D_i$ coincide on the common edges of $\Lambda$ and $\Sigma_i$ for every $i \in [1,m]$. Consequently, there is a uniquely defined orientation $D'$ of $\Sigma \cup \Lambda$ whose restriction to   
$\Lambda$ coincides with $D$, and for every $i \in [1,m]$, the restriction of $D'$ 
to $\Sigma_i$ coincides with $D_i$. 
Now, define the function $\varphi' : \edge(\Sigma \cup \Lambda) \to \Z_3$ as 
\[
\varphi'(e)=\begin{cases}
\varphi(e) & \text{if}\quad e \in E(\Lambda) \setminus E(\Sigma), \\
\varphi_i(e) & \text{if}\quad e \in E(\Sigma_i) \setminus E(\Lambda), \\
\varphi(e)+\varphi_i(e) & \text{if}\quad e\in E(\Lambda)\cap E(\Sigma_i) . 
\end{cases}
\]    
It follows from the assumption in case (iv) of the lemma and the properties in \eqref{eq:null}--\eqref{eq:varphii} that $(D',\varphi')$ is  a nowhere-zero $\Z_3$-flow.	
\end{proof}
%---------------------------------------------------------------------------------------------------%
\section{Nowhere-zero 3-flows and a pseudoforest decomposition}\label{sec:pforests} 

Let $\G$ be a graph and $D$ be an orientation of $\G$. For a vertex $v \in \vertex(\G)$, 
the \emph{out-degree} $d^+_D(v)$ of $v$ in $D$ is equal to 
the cardinality $|D^+(v)|$ and 
the \emph{in-degree} $d^-_D(v)$ is equal to the cardinality 
$|D^-(v)|$.  In the case where every vertex has out-degree $0$ or $1$, we say that $D$ is a \emph{$(0,1)$-orientation}. In the next lemma we describe the graphs that admit at least one $(0,1)$-orientation. For this purpose, we need to introduce two more definitions. 
A graph is called \emph{unicyclic} if it contains exactly one cycle and it is called a  
\emph{pseudoforest} if each of its components is a tree or unicyclic. 

\begin{lem}\label{pforest}\
\begin{enumerate}[(i)] 
\item Let $D$ be a $(0,1)$-orientation of a graph $\G$. Then $\G$ is a pseudoforest. 
\item Let $\G$ be a pseudoforest and fix a subset $V' \subseteq  \vertex(\G)$. 
Then there exists a $(0,1)$-orientation $D$ of $\G$ such that 
\[
V'=\{ v \in \vertex(\G) \mid d^+_D(v)=0\}
\]
if and only if $V'$ has exactly one vertex from each tree component of $\G$ and no vertex  
from the unicyclic components.
\end{enumerate}
\end{lem}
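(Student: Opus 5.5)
The proof is a counting argument applied component by component. In a $(0,1)$-orientation every edge has exactly one tail, and each vertex is the tail of at most one edge. So for a connected subgraph (a component) $C$ with vertex set $V_C$ and edge set $E_C$, we have $|E_C| = \sum_{v\in V_C} d^+_D(v) \le |V_C|$. Moreover, the number of vertices of $C$ with out-degree $0$ equals $|V_C|-|E_C|$.

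For part (i), let $D$ be a $(0,1)$-orientation and $C$ a component. Then $|E_C|\le |V_C|$, and since $C$ is connected, $|E_C|\ge |V_C|-1$. If $|E_C|=|V_C|-1$, then $C$ is a tree. If $|E_C|=|V_C|$, then $C$ is connected with cyclomatic number $1$, so it contains exactly one cycle, i.e.\ it is unicyclic. Hence $\G$ is a pseudoforest.

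For the "only if" direction of part (ii), the same count gives the claim directly. In a tree component, exactly $|V_C|-|E_C|=1$ vertex has out-degree $0$. In a unicyclic component, $|V_C|-|E_C|=0$ vertices have out-degree $0$. So $V'$ contains exactly one vertex from each tree component and none from the unicyclic components.

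For the "if" direction, I construct $D$ component by component.
\begin{itemize}
\item \emph{Tree component $T$ with chosen vertex $r\in V'$.} Root $T$ at $r$ and orient every edge toward $r$, i.e.\ from child to parent. Every vertex other than $r$ then has out-degree exactly $1$ (its edge to its parent), and $r$ has out-degree $0$.
\item \emph{Unicyclic component $U$ with unique cycle $C$.} Orient $C$ cyclically, so each cycle vertex has out-degree $1$ along $C$. Deleting the edges of $C$ from $U$ leaves a forest in which each tree contains exactly one cycle vertex; root each such tree at its cycle vertex and orient its edges toward that root. Then every vertex of $U$ has out-degree exactly $1$: a cycle vertex's only outgoing edge is its cycle edge, since its tree edges point into it.
\end{itemize}
The resulting $D$ is a $(0,1)$-orientation whose set of out-degree-$0$ vertices is exactly $V'$.

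The only step that needs care is the unicyclic construction in the "if" direction. One must check that removing the cycle edges leaves trees each meeting $C$ in exactly one vertex. This holds because a tree meeting $C$ in two vertices would contain a path between them avoiding the edges of $C$, creating a second cycle in $U$.
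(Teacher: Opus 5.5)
Your proposal is correct and follows essentially the same route as the paper: the per-component count $|V_C|-1\le |E_C|=\sum_{v\in V_C} d^+_D(v)\le |V_C|$ handles (i) and the necessity in (ii), and the construction for sufficiency is the same (orient each tree toward its chosen vertex; orient the unique cycle cyclically and the remaining edges toward the cycle). Your extra check that deleting the cycle edges leaves trees each meeting the cycle in exactly one vertex is a detail the paper leaves implicit.
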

\begin{proof}
(i): Let $U=\{ v \in \vertex(\G) \mid d^+_D(v)=0\}$ and $\G'$ be a component of $\G$. 
Then the following inequality holds for $\G'$:  
\[
|\vertex(\G')|-1 \le |\edge(\G')|=\sum_{v \in \vertex(\G')} d^+_D(v)=|\vertex(\G')|-
|U \cap \vertex(\G')|.
\]
Thus $|U \cap \vertex(\G')| \le 1$. Furthermore, if $|U \cap \vertex(\G')|=1$, then 
$|\edge(\G')|=|\vertex(\G')|-1$, whence $\G'$ is a tree; whereas if $|U \cap \vertex(\G')|=0$, then  $|\edge(\G')|=|\vertex(\G')|$, whence $\G'$ is unicyclic. 
\medskip

(ii): The necessity of the statement can be red off from the proof of (i). For the sufficiency 
assume that $V'$ has exactly one vertex from each tree component and none from the 
unicyclic components.  A required $(0,1)$-orientation can be constructed as follows. 
If $\G'$ is a tree component of $\G$, then orient each edge of $\G'$ toward its unique 
vertex in $V'$; whereas if $\G'$ is a unicyclic component, then orient the edges in 
the cycle of $\G'$ cyclically, and then 
the remaining edges toward the cycle. 
\end{proof}

In what follows, we say that a set $U$ of vertices of a pseudoforest $\G$ is a \emph{tranversal} of $\G$ if $U$ has exactly one vertex from each tree component of $\G$ and no vertex from the unicyclic components. 
\medskip

We are now ready to state the main result of this section. This will serve as a key  
tool in the proof of Theorem~\ref{main}.

\begin{lem}\label{new-condition}
Let $\G$ be a graph of valency $5$. Then $\G$ admits a nowhere-zero $\Z_3$-flow if and only if 
$\vertex(\G)$ can be partitioned into non-empty subsets $U$ and $W$ such that the following conditions hold.
\begin{enumerate}[(i)]
\item The induced subgraphs $\G[U]$ and $\G[W]$ are pseudoforests.
\item Either all components of $\G[U]$ and $\G[W]$ are unicyclic, or there 
exist a transversal $U'$ of $\G[U]$ and a transversal $W'$ of $\G[W]$ such that 
$\G[U',W']$ has a perfect matching.
\end{enumerate}
\end{lem}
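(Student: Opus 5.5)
The plan is to translate nowhere-zero $\Z_3$-flows into orientations with prescribed out-degrees and then read off the partition. First I would record the standard reduction. If $(D,\varphi)$ is a nowhere-zero $\Z_3$-flow, then reversing every arc with $\varphi(e)=2$ and replacing its value by $1$ gives a flow with value $1$ everywhere. Such a flow is exactly an orientation $D$ with $d^+_D(v)\equiv d^-_D(v)\pmod 3$ for every vertex $v$. Conversely, any such orientation together with the constant function $1$ is a nowhere-zero $\Z_3$-flow. Since $d^+_D(v)+d^-_D(v)=5$, the difference $d^+_D(v)-d^-_D(v)$ is odd and lies in $[-5,5]$, so it is divisible by $3$ only when it equals $\pm 3$. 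Hence $\G$ admits a nowhere-zero $\Z_3$-flow if and only if it has an orientation in which every vertex has out-degree $1$ or out-degree $4$, the latter meaning in-degree $1$.

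\emph{Necessity.} Fix such an orientation $D$. Let $U$ be the set of vertices of out-degree $1$ and $W$ the set of vertices of in-degree $1$. Both sets are non-empty, since $|\edge(\G)|=5|\vertex(\G)|/2$ differs from both $|\vertex(\G)|$ and $4|\vertex(\G)|$.
\begin{itemize}
\item The restriction of $D$ to $\G[U]$ is a $(0,1)$-orientation, so $\G[U]$ is a pseudoforest by Lemma~\ref{pforest}(i).
\item The reverse of the restriction of $D$ to $\G[W]$ is a $(0,1)$-orientation, so $\G[W]$ is a pseudoforest as well.
\item Let $U'$ be the set of vertices of $U$ with out-degree $0$ inside $\G[U]$, and $W'$ the set of vertices of $W$ with in-degree $0$ inside $\G[W]$. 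By Lemma~\ref{pforest}(ii), $U'$ and $W'$ are transversals of $\G[U]$ and $\G[W]$.
\end{itemize}
Now look at the arcs from $U$ to $W$. Each such arc uses the unique out-arc of its tail, so the tail lies in $U'$. It also uses the unique in-arc of its head, so the head lies in $W'$. Conversely, every vertex of $U'$ has its out-arc going to $W$, and every vertex of $W'$ has its in-arc coming from $U$. Hence the arcs from $U$ to $W$ form a perfect matching of $\G[U',W']$. If all components of both pseudoforests are unicyclic, then $U'=W'=\emptyset$ and we are in the first alternative of (ii).

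\emph{Sufficiency.} Reverse the construction. By Lemma~\ref{pforest}(ii), choose a $(0,1)$-orientation of $\G[U]$ whose vertices of out-degree $0$ are exactly $U'$. Likewise choose a $(0,1)$-orientation of $\G[W]$ whose vertices of out-degree $0$ are exactly $W'$, and reverse it. In the all-unicyclic case take $U'=W'=\emptyset$ with the empty matching. Orient the matching edges from $U'$ to $W'$, and orient every other edge of $\G[U,W]$ from $W$ to $U$. Then every vertex of $U$ has out-degree exactly $1$ and every vertex of $W$ has in-degree exactly $1$, so by the reduction we obtain a nowhere-zero $\Z_3$-flow.

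The only delicate step is checking that the arcs from $U$ to $W$ form precisely a perfect matching between the two transversals, which means checking both endpoint conditions. This follows from the out-degree and in-degree bookkeeping above.
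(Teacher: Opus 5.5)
Your proof is correct and follows essentially the same route as the paper. You normalize to a constant-valued flow so that every vertex has out-degree $1$ or $4$, and take $U$ and $W$ accordingly. You then apply Lemma~\ref{pforest} to the restriction of $D$ to $\G[U]$ and to the reversed restriction on $\G[W]$, and observe that the $U$-to-$W$ arcs form a perfect matching between the resulting transversals. The converse uses the same explicit orientation as the paper.

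The only differences are cosmetic. You justify the constant-value normalization and the $\pm 3$ degree condition explicitly. You also describe the matching directly through the arcs, where the paper builds a bijection $f\colon U'\to W'$.
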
    
\begin{proof}
Part ``$\Rightarrow$'': Assume that $(D,\varphi)$ is a nowhere-zero $\Z_3$-flow of $\G$. 
We may assume without loss of generality that 
$\varphi$ assigns the same non-zero value to every edge of $\G$. 
Choose an arbitrary vertex $v \in \vertex(\G)$ and set 
$k=d^+_D(v)$. As $\varphi$ is a constant function, we have that $k \equiv 5-k \pmod 3$, 
from which $k=1$ or $k=4$. Define the subsets $U, W \subseteq \vertex(\G)$ as
\[
U=\{ v \in \vertex(\G) \mid d^+_D(v)=1 \},~
W=\{ v \in \vertex(\G) \mid d^+_D(v)=4 \}.
\]

Assume that $W=\emptyset$. Then it follows from 
Lemma~\ref{pforest} that $\G$ is a pseudoforest whose components are unicyclic. 
Thus, if $\G'$ is such a component, then $|\edge(\G')|=|\vertex(\G')|$. 
However, as $\G'$ is regular of valency $5$, we also have 
$2|\edge(\G')|=5|\vertex(\G')|$. This is impossible, hence $W \ne \emptyset$. 
Applying the same argument to the reverse orientation $D^t$ shows that 
$U \ne \emptyset$ as well. 
We show that the conditions in (i) and (ii) hold for $\G[U]$ and $\G[W]$. 

Denote by $D_1$ the restriction of $D$ to $\G[U]$ and 
by $D_2$ the restriction of $D^t$ to $\G[W]$. 
Then, for $v \in U$, $d^+_{D_1}(v) \le d^+_D(v)=1$; and for $v \in W$, 
\[
d^+_{D_2}(v) \le d^+_{D^t}(v)=5-d^+_D(v)=1.
\]
By Lemma~\ref{pforest}(i), both $\G[U]$ and 
$\G[W]$ are pseudoforests, so the condition in (i) holds.

Define the subsets $U' \subseteq U$ and $W' \subseteq W$ as  
\[
U'=\{v \in U \mid d^+_{D_1}(v)=0 \},~
W'=\{v \in W \mid d^+_{D_2}(v)=0 \}.
\]
It follows from Lemma~\ref{pforest}(ii) that $U'$ is a transversal of 
$\G[U]$ and $W'$ is a transversal of $\G[W]$.

Fix an arbitrary vertex $u \in U'$. Then there is a unique neighbor 
$w$ of $u$ such that $(u,w)$ is an arc of $D$ and $w \in W$. 
It follows that $w \in W'$ and this allows us to define the mapping $f : U' \to W'$ by 
letting $f(u)=w$. To have the condition in (ii), it is sufficient to show that $f$ is bijective. 
If $w' \in W'$, then there is a unique neighbor $u'$ of $w'$ such 
that $(u',w')$ is an arc of $D$ and $u' \in U$. 
It follows that $u' \in U'$, hence $f(u')=w'$, and so $f$ is surjective.  
If $u, u' \in U'$ were distinct vertices such that $f(u)=f(u')$, then we would have 
$d^+_D(f(u)) < 4$. This is impossible, hence $f$ is also injective. 
\medskip

Part ``$\Leftarrow$'': Assume that $\vertex(\G)$ is partitioned into non-empty subsets $U$ and $W$ such that the conditions in (i) and (ii) hold for $\G[U]$ and $\G[W]$. In particular, denote by $M$ a  perfect matching in $\G[U',W']$. By Lemma~\ref{pforest}, there exist a $(0,1)$-orientation $D_U$ of $\G[U]$ and a $(0,1)$-orientation $D_W$ of $\G[W]$ such that 
\[
U'=\{v \in U \mid d_{D_U}^+(v)=0\},~W'=\{v \in W \mid d_{D_W}^+(v)=0\}.
\]
Then, define the orientation $D$ of $\G$ by orienting the edges of $\G[U]$ as in $D_U$, the edges of $\G[W]$ as in $(D_W)^t$, the edges in $M$ from $U'$ to $W'$, and finally, the edges 
in $\G[U,W]$ outside $M$ from $W$ to $U$. Then for every $v \in \vertex(\G)$, 
\[
d^+_D(v)=
\begin{cases} 1 & \text{if}~v \in U, \\ 
4 & \text{if}~v \in W.
\end{cases}
\]
Clearly, the pair $(D,\varphi)$ defines a nowhere-zero $\Z_3$-flow in 
$\G$, where $\varphi$ assigns a constant non-zero value to every edge of $\G$. 
\end{proof}  
%---------------------------------------------------------------------------------------------------%
\section{Proof of Theorem~\ref{main}}\label{sec:proof}

Throughout this section, we assume that $n$ is a square-free integer and $G$ is 
a solvable group of order $2n$. Our goal is to show that every connected Cayley graph on $G$ of valency at least $4$ admits a nowhere-zero $3$-flow.  
We proceed by induction on $n$. If $n$ is a prime, then $G$ is either an abelian or a dihedral group and the assertion was proved in~\cite{PSS,FL}. 
The following assumption on $n$ will be used throughout this section. 

\begin{hypo}\label{n}
The number $n$ is composite and %has the property that 
for every solvable group $A$ of order $2n'$, where $n'$ is a proper divisor of $n$, 
every connected Cayley graph on $A$ of valency at least $4$ admits a nowhere-zero 
$3$-flow.
\end{hypo}

Suppose that $\G=\cay(G,X)$ is a connected Cayley graph of valency $d \geq 4$. 
If $d=4$, then $\G$ is Eulerian and hence admits a 
nowhere-zero $3$-flow in a straightforward manner. 
If $d > 5$, 
then $\G$ is $6$-edge-connected~\cite{M} and 
therefore, it admits a nowhere-zero $3$-flow~\cite{LTWZ}. 
We prove through a sequence of lemmas that 
$\G$ admits a nowhere-zero $\Z_3$-flow also in the case where $d=5$. 

We begin with two observations concerning nowhere-zero $\Z_3$-flows in 
Cayley multigraphs. The first one is included for curiosity, as it demonstrates that Theorem~\ref{main} cannot be extended to multigraphs (see the remark after 
Theorem~\ref{main}). 
The second one is an auxiliary lemma that will be needed later. 

\begin{exm}\label{A4}
Let $\G=\cay(A_4,\{a,b,b,b^{-1},b^{-1}\})$ be the multigraph, where $A_4$ is the alternating 
group of degree $4$, $A_4=\sg{a,b}$, $|a|=2$ and $|b|=3$. 
We claim that there does not exist a nowhere-zero $\Z_3$-flow in $\G$.
\begin{proof}
Toward a contradiction suppose that $(D,\varphi)$ is a nowhere-zero $\Z_3$-flow in $\G$. 
We may assume without loss of generality that parallel edges have the same orientation in $D$.  

Define the spanning subgraph $\G'$ of $\G$ and parallel a $\Z_3$-flow $(D',\varphi')$ in $\G'$ 
as follows. %Let $\vertex(\G')=G$, and 
\begin{itemize}
\item If $e$ is an $a$-edge of $\G$, then add $e$ to $\G'$ and set $D'(e)=D(e)$,  
$\varphi'(e)=\varphi(e)$; 
\item if $e, e'$ are parallel $b$-edges of $\G$ and   
$\varphi(e) \ne \varphi(e')$, then delete both $e$ and $e'$ from $\G$; 
\item if $e, e'$ are parallel $b$-edges of $\G$ and $\varphi(e)=\varphi(e')$, then add 
only $e$ to $\G'$ and set $D'(e)=D(e)$, $\varphi'(e)=-\varphi(e)$. 
\end{itemize}
It is not difficult to see that $\G'$ is a simple graph, which is obtained from the 
Cayley graph $\Delta=\cay(A_4,\{a,b,b^{-1}\})$ by deleting at most one edge from each 
of the four triangles of $\Delta$. Clearly, $(D',\varphi')$ is a nowhere-zero $3$-flow in 
$\G'$.  Now, smoothing the vertices of degree $2$ in $\G'$ yields a $3$-valent non-bipartite graph, which also admits a nowhere-zero $3$-flow. However, this contradicts the well-known fact that a $3$-valent graph admits a nowhere-zero $3$-flow if and only if it is bipartite (see ~\cite[Proposition~6.4.2]{Dbook}).
\end{proof}
\end{exm}

\begin{lem}\label{multi}
Let $A$ be any group and $Y$ be connection multiset of $A$ of cardinality $5$ (computed 
with multiplicity) such that $Y$ contains an element of order larger than $2$ and of multiplicity $1$. Then $\cay(A,Y)$ admits a nowhere-zero $\Z_3$-flow. 
\end{lem}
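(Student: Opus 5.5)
The plan is to work directly with orientations, in the spirit of Lemma~\ref{new-condition}. Since $Y=Y^{-1}$ and multiplicities of $x$ and $x^{-1}$ agree, the element $y\in Y$ of order $|y|>2$ and multiplicity $1$ comes with $y^{-1}$, also of multiplicity $1$. So $Y=\{y,y^{-1}\}\cup Z$, where $Z$ is an inverse-closed multiset of cardinality $3$. Because $|Z|$ is odd, $Z$ contains an involution $a$. Up to relabelling, the possibilities are
\begin{itemize}
\item $Z=\{a,a,a\}$;
\item $Z=\{a,a,c\}$ or $Z=\{a,c,c'\}$ with $c,c'$ further involutions;
\item $Z=\{a,b,b^{-1}\}$ with $|b|>2$ (possibly $b=y^{\pm1}$ once multiplicities are taken into account).
\end{itemize}

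As in the proof of Lemma~\ref{new-condition}, a nowhere-zero $\Z_3$-flow with constant value exists as soon as there is an orientation $D$ of $\G=\cay(A,Y)$ with $d^+_D(v)\in\{1,4\}$ for every vertex $v$. The $y$-edges form a $2$-factor whose components are the cycles $g\sg{y}$, each of length $|y|\ge 3$. Let $H=\cay(A,Z)$, a cubic multigraph, and let $h(v)$ denote the out-degree of $v$ in the chosen orientation of $H$. The $y$-out-degree $s(v)\in\{0,1,2\}$ must then satisfy the following rule:
\begin{itemize}
\item $s(v)=1$ if $h(v)\in\{0,3\}$;
\item $s(v)=0$ if $h(v)=1$;
\item $s(v)=2$ if $h(v)=2$.
\end{itemize}
An orientation of a cycle realises a prescribed pattern of out-degrees in $\{0,1,2\}$ if and only if, going around the cycle, the vertices with $s=0$ and $s=2$ alternate, with the vertices of out-degree $1$ anywhere in between. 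So the lemma reduces to the following task: orient $H$ so that on every $y$-cycle, the vertices with $h(v)\in\{1,2\}$ alternate between $h=1$ and $h=2$.

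I would carry out this reduction case by case.
\begin{itemize}
\item For $Z=\{a,a,a\}$: orient each triple of parallel $a$-edges from $g$ to $ga$, choosing one endpoint of each $a$-pair. Then $h\in\{0,3\}$ everywhere and the $y$-cycles are oriented cyclically.
\item For $Z=\{a,b,b^{-1}\}$: orient each $b$-edge as $g\to gb$, so the $b$-edges contribute exactly $1$ to $h$. Then $h(v)$ is $1$ or $2$ according to whether $v$ is the head or the tail of its $a$-edge, and the requirement becomes: colour the $a$-matching (choose the tail of each $a$-edge) so that tails and heads alternate along every $y$-cycle.
\item The involution-only cases $\{a,a,c\}$ and $\{a,c,c'\}$ are handled similarly. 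Orient the doubled $a$-edges consistently so that they contribute $0$ or $2$, and reduce again to a choice of tails on a perfect matching, interleaved along the $y$-cycles.
\end{itemize}

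The main obstacle is the parity problem in the alternation requirement: a $y$-cycle of odd length cannot have every vertex in $\{h=1,h=2\}$ alternating. I would first try to avoid this by creating vertices with $h\in\{0,3\}$, which are ignored in the alternation. In the case $\{a,b,b^{-1}\}$ this means reversing the $b$-edges along a suitable set of $b$-cycles, or along alternating paths built from $a$- and $b$-edges, so that each odd $y$-cycle receives an odd number of such vertices. This amounts to a parity/matching argument on the auxiliary graph whose vertices are the $y$-cycles and whose edges come from the $a$- and $b$-edges, much like choosing a $T$-join. If this local repair becomes messy, the fallback is Proposition~\ref{parity}: exhibit a parity subgraph consisting of the $y$-cycles together with the $a$-edges and part of the $b$-edges, and apply Proposition~\ref{CLn} and the gluing technique of Lemma~\ref{new-suffi} to absorb the odd cycles.
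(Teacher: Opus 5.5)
Your proposal has a genuine gap, and it sits in the case split. First, the case $b=y^{\pm1}$ is excluded by the hypothesis: it gives $Y=\{y,y^{-1},y,y^{-1},a\}$, in which no element of order $>2$ has multiplicity $1$. It has to be excluded, because Example~\ref{A4} ($Y=\{a,b,b,b^{-1},b^{-1}\}$ on $A_4$) is exactly this case and has no nowhere-zero $\Z_3$-flow. So no repair scheme for $Z=\{a,b,b^{-1}\}$ can work uniformly.

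Second, for $b\neq y^{\pm1}$, and for $Z=\{a,c,c'\}$ with three distinct involutions, $Y$ is an ordinary set and $\cay(A,Y)$ is a simple $5$-valent Cayley graph on an arbitrary group. Note that in the distinct-involution case there are no doubled $a$-edges to orient. Proving the claim for these graphs amounts to Tutte's conjecture for $5$-valent Cayley graphs with one or three involutions in the connection set, which is open. All four graphs $\G_1,\dots,\G_4$ of Lemma~\ref{reduction} have exactly this shape, and the paper needs Subsections~\ref{sec:4.1} and~\ref{sec:4.2} for them. Your $T$-join style parity repair and the fallback via Lemma~\ref{new-suffi} are not carried out, and cannot be in this generality.

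The lemma must be read for a genuine multiset, i.e.\ some element of $Y$ is repeated. The paper's proof assumes this tacitly, and it is applied in Lemma~\ref{reduction} only when the quotient is a multigraph. Under this reading, a pair $b,b^{-1}$ in $Z$ could create a repeat only if $b=y^{\pm1}$, which is excluded. Hence $Z=\{z,z,z'\}$ with $|z|=|z'|=2$.

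Within this reading, your bullet $\{a,a,a\}$ is correct and coincides with the paper's case $z=z'$. For $Z=\{a,a,c\}$ with $c\neq a$, however, ``handled similarly'' leaves you facing the same alternation problem along odd $y$-cycles that you yourself call the main obstacle, and you give no way around it.

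The missing idea is to drop the constant-value normalization. The graph $\cay(A,\{y,y^{-1},a,c\})$ is $4$-regular, hence has a nowhere-zero $\Z_3$-flow $(D,\varphi)$ obtained by orienting along Euler circuits. Give both copies of each $a$-edge $e$ the orientation $D(e)$ and the value $-\varphi(e)$. Conservation is preserved because $2\cdot(-\varphi(e))=\varphi(e)$ in $\Z_3$. Equivalently, in your out-degree language: orient Euler circuits of the $4$-regular graph and replace each $a$-arc $u\to v$ by two arcs $v\to u$; then every out-degree is $1$ or $4$. These two cases make up the paper's entire proof.
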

\begin{proof}
It is easy to see that $Y$ can be written of the form 
$Y=\{y,y^{-1},z,z,z'\}$, $|y| \ne 2$ and $|z|=|z'|=2$. 
Let $\Sigma=\cay(A,Y)$. 

Assume first that $z=z'$. 
The $y$-edges in $\Sigma$ induce $|y|$-cycles and the 
$z$-edges induce  parallel edges of multiplicity $3$. 
A nowhere-zero $\Z_3$-flow in $\Sigma$ can be constructed by 
orienting the $|z|$-cycles cyclically, the parallel edges identically, and then 
assigning the same non-zero value to every edge. 

Now, assume that $z \ne z'$. Note that there exists a nowhere-zero 
$\Z_3$-flow $(D,\varphi)$ in the subgraph $\cay(A,\{y,y^{-1},z,z'\})$. This 
extends to a nowhere-zero $\Z_3$-flow $(D',\varphi')$ in $\Sigma$ as follows. 
If $e$ is a $y$- or a $z'$-edge of $\Sigma$, then set $D'(e)=D(e)$ and 
$\varphi'(e)=\varphi(e)$. If $e, e'$ are parallel $z$-edges of $\Sigma$, then set 
$D'(e)=D'(e')=D(e)$ and $\varphi'(e)=\varphi'(e')=-\varphi(e)$. 
\end{proof}

We begin our analysis of the connected $5$-valent Cayley graphs on the group 
$G$ defined at the beginning of the section. In the following lemma, we reduce the question of 
whether such a graph admits a nowhere-zero $\Z_3$-flow to four specific graphs. 

\begin{lem}\label{reduction}
Let $\G=\cay(G,X)$ be a $5$-valent connected Cayley graph. 
Then $\G$ admits a nowhere-zero $\Z_3$-flow, unless one of the following holds. 
\begin{enumerate}[(i)]
\item $G \cong (\Z_2^2 \times \Z_p) \rtimes \Z_{3k}$ and $\G$ is equal to 
\[
\G_1=\cay(G,\{x,a,a^{-1},y,y^{-1}\})~\text{or}~\G_2=\cay(G,\{x,ay,(ay)^{-1},y,y^{-1}\}),
\] 
where $p$ is a prime, $p > 3$, $|a|=p$, $|x|=2$, $|y|=3k$, $k$ is odd and not divisible by $p$    
and $xy \ne yx$. %, and $C_G(\sg{a,x})=\sg{a,x}$. 
\item $G \cong  A_4 \times \Z_p$ and $\G$ is equal to 
\[
\G_3=\cay(G,\{x,a,a^{-1},y,y^{-1}\})~\text{or}~
\G_4=\cay(G,\{x,ay,(ay)^{-1},y,y^{-1}\}),
\] 
where $p$ is a prime, $p > 3$, $|a|=p$, $|x|=2$ and $|y|=3p$. 
\end{enumerate}
\end{lem}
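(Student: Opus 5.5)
We prove Lemma~\ref{reduction} by combining the induction hypothesis (Hypothesis~\ref{n}) with the quotient and central-involution results. The aim is to narrow down both $G$ and $X$ until only the four listed graphs remain.

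\emph{Shape of $X$.} Since $\G$ is $5$-valent, $X$ contains an odd number of involutions, so $X=\{x,u,u^{-1},v,v^{-1}\}$ with $|x|=2$, or $X$ consists of three or five involutions. We argue as follows.
\begin{itemize}
\item If $X$ contains a central involution, Proposition~\ref{c-involution} gives the flow. In particular, if $G$ has a central involution, then every involution of $G$ is conjugate to it only when it is central. More useful is this: when $|G|=2n$ with $n$ odd square-free, the Sylow $2$-subgroup is $\Z_2$. By Burnside's transfer theorem, $G=N\rtimes\sg{x}$ with $N$ the normal subgroup of odd order $n$, and all involutions of $G$ lie outside $N$.
\item If $n$ is even, then $|G|=4m$ with $m$ odd. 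This is the case producing $\Z_2^2$ and $A_4$.
\end{itemize}

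\emph{Quotient reduction.} Take a minimal normal subgroup $K$ of $G$ with $K\cap X=\emptyset$; since $G$ is solvable, $K$ is elementary abelian. Consider $\G/K$. If $X/K$ is a simple $5$-valent connection set generating $G/K$, then Hypothesis~\ref{n} applied to $G/K$ (order $2n'$ with $n'$ a proper divisor of $n$, still solvable) gives a flow in $\G/K$. Proposition~\ref{quo} then lifts it to $\G$. If $X/K$ is a multiset, Lemma~\ref{multi} applies whenever some element of order greater than $2$ keeps multiplicity $1$.

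So the surviving cases are those where, for every such $K$, the quotient collapses badly. This happens in two ways:
\begin{itemize}
\item $X/K$ loses valency to $4$ or less via $1$-collisions, which we rule out since $K\cap X=\emptyset$;
\item the non-involution pairs $\{u^{\pm1}\}$ and $\{v^{\pm1}\}$ merge modulo $K$, or become involutions, so that Lemma~\ref{multi} fails.
\end{itemize}
Exploiting this for every minimal normal subgroup, and also using Proposition~\ref{derived} (done if $|G'|$ is square-free) and Proposition~\ref{nilpotent} (done if $G$ is nilpotent), we force the following structure.
\begin{itemize}
\item $G'$ is not of square-free order, so $\Z_2^2\le G'$. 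Hence $n$ is even, the Sylow $2$-subgroup is $\Z_2^2$, and $\Z_2^2\trianglelefteq G$.
\item Propositions~\ref{N/C} and~\ref{Fit} show that $G/C_G(\Z_2^2)\le \aut(\Z_2^2)\cong S_3$ acts with an element of order $3$.
\item The odd part of $F(G)$ is cyclic of square-free order. The requirement that all quotients by prime-order normal subgroups $\sg{a}$ cause $u$ or $v$ to merge forces one pair to generate $\sg{a}$ (respectively $\sg{ay}$) with $|a|=p$. The other pair $y$ then generates a complement of order $3k$ acting nontrivially on $\Z_2^2$.
\end{itemize}

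Distinguishing whether $\sg{y}$ centralizes $\sg{a}$ fully (the $A_4\times\Z_p$ case, $|y|=3p$) or not (the semidirect case) yields exactly $\G_1,\dots,\G_4$. The restrictions $p>3$, $k$ odd and $p\nmid k$ come from excluding small primes and the coincidences that would make a quotient simple again.

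\emph{Main obstacle.} The main obstacle is this group-theoretic case analysis: showing that whenever every minimal normal $K$ produces a bad quotient (merged pairs, or no element of order greater than $2$ with multiplicity $1$), the group and connection set must be as listed. I would first handle $n$ odd, showing that there some quotient by a normal subgroup of prime order always works. This uses that $x$ acts by inversion, and that merging both pairs in every quotient forces $G$ to be generalized dihedral, which is covered by~\cite{LL}. I would then treat $n$ even via the normal Klein four-group and the quotient $G/\Z_2^2$, which has odd square-free order times $3$.
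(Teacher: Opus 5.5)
Your proposal has a genuine gap. The sentence ``Exploiting this for every minimal normal subgroup \dots\ we force the following structure'' is the entire content of the lemma, and it is asserted rather than proved; you flag it yourself as the main obstacle. Several concrete ideas are needed at that point.

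\emph{Normality of the Klein four-group.} $4\mid|G'|$ does not by itself give a normal $\Z_2^2$. The paper goes through the Fitting subgroup. If $4\nmid|F(G)|$, then $F(G)$ is cyclic, so $C_G(F(G))=F(G)$ by Proposition~\ref{Fit}. Then $G/F(G)$ embeds in the abelian group $\aut(F(G))$, and $G'\le F(G)$ has square-free order, contradicting Proposition~\ref{derived}. Hence the Sylow $2$-subgroup $S$ is normal of order $4$ and contains every involution. Propositions~\ref{c-involution} and~\ref{N/C} then give $S\cong\Z_2^2$ and $|G:C_G(S)|=3$.

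\emph{The quotient step.} You must pick $y\in X\setminus C_G(S)$ and show $|y|$ is odd; otherwise $y$ fixes the involution $y^{|y|/2}\in S$ and so centralizes $S$. This is what makes the quotient argument precise. Take the Sylow $p$-subgroup $P$ of $F(G)$, $p$ odd, with $X\cap P=\emptyset$. The coset $Py$ cannot equal $Py^{-1}$ or an involution coset, since either would put $y$ in $F(G)\le C_G(S)$. So if $\G/P$ is to escape Hypothesis~\ref{n} and Lemma~\ref{multi}, we need $ay\in X$ for some $1\ne a\in P$, that is, $X=\{x,y^{\pm1},(ay)^{\pm1}\}$. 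This also eliminates the three-involution case, which you never address.

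\emph{A single odd prime in $F(G)$.} You need $F(G)=P\times S$. If another odd prime $q$ divides $|F(G)|$, then either $X$ meets both $P$ and $Q$, so $X\subseteq F(G)$ and $G$ is nilpotent (Proposition~\ref{nilpotent}). Or the previous claim, applied to $P$ and then to $Q$, forces $y\in F(G)$.

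\emph{The final split.} $3\mid|y|$ gives $p>3$. If $p\mid k$, then $ay=ya$, so $y^3\in C_G(F(G))=F(G)$; this gives $k=p$ and $G\cong A_4\times\Z_p$. Your ``whether $\sg{y}$ centralizes $\sg{a}$'' is the right dichotomy, but this deduction is missing.

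Two further problems. First, your minimal-normal-subgroup strategy cannot handle $F(G)=S$, i.e.\ $G\cong A_4$. There the only minimal normal subgroup is $S$, which meets $X$, so there is nothing to factor out; the paper disposes of this case by citing~\cite{ZZ}.

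Second, your plan for $n$ odd is both unnecessary and unsound. When $n$ is odd, $|G|$ is square-free, so $|G'|$ is square-free and Proposition~\ref{derived} applies at once. Your claim that $x$ acts by inversion on the odd part is false in general (e.g.\ $G\cong\Z_5\times S_3$), so nothing forces $G$ to be generalized dihedral.
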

\begin{proof}
It follows from Proposition~\ref{derived} that $n$ is divisible by $2$. 
For the sake of an easier notation, we write $F=F(G)$.

Assume for the moment $|F|$ is not 
divisible by $4$. Then $F$ is a cyclic group and it follows from 
Proposition~\ref{Fit} that $F=C_G(F)$. By proposition~\ref{N/C}, 
$G/F$ is isomorphic to a subgroup of $\aut(F)$, and as $F$ is cyclic, 
$\aut(F)$ is abelian. However, then $G^\prime \le F$, hence  
$\G$ admits a nowhere-zero $3$-flow due to Proposition~\ref{derived}, a contradiction.

Thus $4$ divides $|F|$, and so $G$ has a normal Sylow $2$-subgroup of order $4$. 
Let this subgroup be denoted by $S$. Note that all involutions of $G$ are contained in $S$, 
in particular, $S \cap X \ne\emptyset$. Fix $x \in S \cap X$.

It follows from Proposition~\ref{c-involution} and the assumption that there does not exist 
a nowhere-zero $3$-flow in $\G$ that $C_G(S) < G$. 
By Proposition~\ref{N/C}, $G/C_G(S)$ is isomorphic to a subgroup of $\aut(S)$. 
Using also that $|G/C_G(S)|$ is odd, we obtain that $S \cong \Z_2^2$ and 
$|G/C_G(S)|=3$. As $\sg{X}=G$, $X \setminus C_G(S) \ne \emptyset$. Fix 
$y \in X \setminus C_G(S)$. 

We claim that $|y|$ is odd. Indeed, if $|y|$ is even, then $y$ centralizes an element in 
$S$. This and the fact that $|y^2|$ is odd yield that $y$ centralizes the whole group $S$, contradicting that $y \notin C_G(S)$.

Now, if $S=F$, then $G \cong A_4$. However, all connected Cayley graphs on the group $A_4$ of valency $5$ admit a nowhere-zero $3$-flow by~\cite{ZZ}, a contradiction. Thus, $S < F$. 
Let $P$ be a Sylow $p$-subgroup of $F$ for a prime $p > 2$. 
 
\begin{claim}
If $X \cap P=\emptyset$, then $X=\{x,y,y^{-1},ay,(ay)^{-1}\}$ for some $a \in P, a\ne 1$, and  
$xy \ne yx$.
\end{claim}
\begin{proof}[Proof of Claim] 
Let $\G'=\cay(G/P,X/P)$. 
Assume for the moment that $Py$ has multiplicity $1$ in the set $X/P$. 
Then $\G'$ admits a nowhere-zero $3$-flow. 
Indeed, if $\G'$ is a simple graph, then this follows directly from the assumption in 
Hypothesis~\ref{n}; while if $\G'$ is a multigraph, then this follows from Lemma~\ref{multi} 
because $Py$ is of order larger than $2$ and of multiplicity $1$. 
Now, by Proposition~\ref{quo}, $\G$ admits a nowhere-zero $3$-flow, a contradiction. 
Thus, $Py$ has multiplicity larger than $1$ in $X/P$. This means that there exists $a \in P, a \ne 1$, 
such that $ay \in X$. If $|ay|=2$, then $ay \in S$, so $y \in F \le C_G(S)$, a contradiction. 
Thus $(ay)^{-1} \ne ay$, and $X=\{x,y,y^{-1},ay,(ay)^{-1}\}$. Finally, 
$xy \ne yx$, for otherwise, $x$ would be in $Z(G)$, which is impossible. 
\end{proof}

We show next that $F=\sg{P,S}=P \times S$. Assume the contrary and let 
$Q$ be a Sylow $q$-subgroup of $F$ for a prime $q$, such that $q \ne 2$ and $q \ne p$. 
If $X \cap P \ne \emptyset$ and $X \cap Q \ne \emptyset$, then 
$X \subset F$, by which $G=F$. This cannot occur due to Proposition~\ref{nilpotent}, 
thus $X \cap P=\emptyset$ or 
$X \cap Q=\emptyset$. We may assume without loss of generality 
that $X \cap P=\emptyset$. 
By the Claim, $X=\{x,y,y^{-1},ay,(ay)^{-1}\}$ for some $a \in P, a \ne 1$. 
As $y \notin C_G(S)$ and $Q < C_G(S)$, $X \cap Q=\emptyset$ also holds. The proof of the Claim can be applied to $Q$, and we obtain that 
\[
X/Q=\{Qx, Qy, Qy^{-1}, Qay, Q(ay)^{-1}\}
\]
is a multiset. Using that $|y|$ is odd, $Q$ is normal in $G$, and $\sg{Q,a} < F$, 
it is a routine exercise to derive from this that $y \in F$, which is a contradiction.

To sum up, we have shown that $F(G)=P \times S$, and there exists a generator $a$ of $P$ such that 
\[ 
X=\{x, a,a^{-1},y, y^{-1}\}~or~X=\{x,ay,(ay)^{-1},y,y^{-1}\},
\]
where $|x|=2$, $|y|$ is odd and $xy \ne yx$. It follows that $3$ divides both $|y|$ and 
$|G/F|$, by which $|y|=3k$ for some odd number $k$ and $p > 3$. 

Now, if $p$ does not divide $k$, then $G=F \rtimes \sg{y} \cong 
(\Z_2^2 \times \Z_p) \rtimes \Z_{3k}$, and so case (i) follows.
Finally, if $p$ divides $k$, then $ay=ya$. Then 
$y^3 \in C_G(F)=F$, implying in turn that $k=|y^3|=p$,   
$G=A_4 \times \Z_p$, and so case (ii) follows. 
\end{proof}

In order to derive Theorem~\ref{main}, it remains to show that there are 
nowhere-zero $\Z_3$-flows in the graphs $\G_i$'s defined in Lemma~\ref{reduction}. 
For the remainder of the paper $G$ is a group described in Lemma~\ref{reduction}.
We may assume without loss of generality that for the normal Sylow $2$-subgroup $S$ of $G$, 
$S=\{x_0,x_1,x_2,x_3\}$, $x_0=1$, $x_1=x$ and 
\begin{equation}\label{eq:y-on-S}
y^{-1}x_iy=x_{i+1}\quad (i \in [1,3]),
\end{equation} 
where $i+1$ is computed modulo $3$.
There exists a number $r \in [1,p-1]$ such that  $y$ acts on the subgroup $\sg{a}$ by conjugation as 
\begin{equation}\label{eq:y-on-ai}
y^{-1}a^iy=a^{ri}\quad (i \in [0,p-1]).
\end{equation}  
Define the subgroup $H \le G$ and the subsets $X_0, X_1 \subset G$ as 
\[
H=\sg{a,y^3},~X_0=\{1,x_3,x_1y,x_2y,y^2,x_3y^2\},~
X_1=\{x_1,x_2,y,x_3y,x_1y^2,x_2y^2\}.
\]
Note that $C_G(S)=S \times H$. Thus, 
$H$ is normal in $G$, $|H|=|G|/12$, and   
$X_0 \cup X_1$ is a transversal of $H$ in $G$. The following observation will be 
useful later. 

\begin{lem}\label{iso}
Let $G$ be the group and $\G$ be the graph described in (i) or (ii) of Lemma~\ref{reduction}.
Let $\H \subset H$ be a non-empty proper subset and  
$U=X_0\H \cup X_1(H \setminus \H)$. Then $\G[U] \cong \G[G \setminus U]$. 
\end{lem}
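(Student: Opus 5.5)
The plan is to exhibit an explicit automorphism of $\G$ that maps $U$ onto its complement $\vertex(\G)\setminus U=G\setminus U$. Since it is a graph automorphism, it will restrict to an isomorphism $\G[U]\to\G[G\setminus U]$. The natural candidates are left multiplications $\lambda_g\colon v\mapsto gv$ for $g\in G$. Each is an automorphism of every Cayley graph $\cay(G,X)$, because $(gu)^{-1}(gv)=u^{-1}v$. I would take $g=x_1=x$.

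First I will describe the complement. Since $X_0\cup X_1$ is a transversal of $H$ in $G$, every element of $G$ can be written uniquely as $th$ with $t\in X_0\cup X_1$ and $h\in H$. Hence
\[
G\setminus U=X_0(H\setminus\H)\cup X_1\H .
\]

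Next I will check that $x_1X_0=X_1$, which is a direct computation in $S\rtimes\sg{y}$. Using $x_1^2=1$ and $x_1x_3=x_2$, $x_1x_2=x_3$ (valid in $S\cong\Z_2^2$), we get
\[
x_1X_0=\{x_1,\,x_1x_3,\,y,\,x_1x_2y,\,x_1y^2,\,x_1x_3y^2\}=\{x_1,x_2,y,x_3y,x_1y^2,x_2y^2\}=X_1 .
\]
Because $x_1$ is an involution, it follows that $x_1X_1=X_0$ as well.

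Then, for any $K\subseteq H$ and any $t\in X_0\cup X_1$, we have $x_1(tK)=(x_1t)K$. Therefore
\[
x_1U=(x_1X_0)\H\cup(x_1X_1)(H\setminus\H)=X_1\H\cup X_0(H\setminus\H)=G\setminus U .
\]
So $\lambda_{x_1}$ is an automorphism of $\G$ carrying $U$ onto $G\setminus U$, and it induces the desired isomorphism $\G[U]\cong\G[G\setminus U]$.

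There is no real obstacle here. The only point needing care is the element bookkeeping in $x_1X_0=X_1$, which uses only the multiplication in $S$. The conjugation rule \eqref{eq:y-on-S} is not even needed, since all products have $x_1$ on the left.
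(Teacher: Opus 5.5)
Your proof is correct and takes the same route as the paper: both use the left translation $g\mapsto x_1g$, which is an automorphism of $\G$, and show that it carries $U$ onto $G\setminus U$. The only cosmetic difference is that the paper gets $x_1U=G\setminus U$ from $|U|=|G|/2$ together with $x_1U\cap U=\emptyset$, whereas you compute $x_1X_0=X_1$ and use the coset decomposition directly, which spells out the check the paper leaves implicit.
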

\begin{proof}
We have $|U|=6|H|=|G|/2$ and $x_1U \cap U = \emptyset$. 
It follows from these that $G \setminus U=x_1U$ and therefore, the automorphism of $\G$ 
sending $g$ to $x_1g$ ($g \in G$) induces an isomorphism from 
$\G[U]$ to $\G[G \setminus U]$. 
\end{proof}

We deal with the Cayley graphs $\G_1$ and $\G_2$ in next subsection. The other two Cayley graphs 
$\G_3$ and $\G_4$ will be discussed in Subsection~\ref{sec:4.2}. 
%---------------------------------------------------------------------------------------------------%
\subsection{Graphs G1 and G2}\label{sec:4.1}

\begin{lem}\label{G1}
$\G_1$ admits a nowhere-zero $3$-flow. 
\end{lem}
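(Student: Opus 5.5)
The plan is to verify the criterion of Lemma~\ref{new-condition} for $\G_1$, with the partition built from the coset structure of $H=\sg{a,y^3}$ and with Lemma~\ref{iso} used to halve the work. Every $5$-valent graph is covered by that lemma, so no parity or ladder argument is needed here.

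\textbf{Choice of the partition.} Since $H$ is normal, the $a$-edges stay inside each right coset $gH$ and split it into $p$-cycles, namely the cosets of $\sg{a}$. The $x$- and $y$-edges move between the $12$ cosets $X_0H\cup X_1H$, following the truncated-tetrahedron Cayley graph $\cay(G/H,\{Hx,Hy,Hy^{-1}\})\cong\cay(A_4,\{x,y,y^{-1}\})$.

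Taking $\H=H$ will not work. Then $U=X_0H$ is a union of full cosets, so every vertex of $U$ lies on an $a$-cycle. Any $x$- or $y$-edge of $\G[U]$ would then join two such cycles, or close a second cycle, and so break the pseudoforest property.

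So I will take $\H$ to be a proper "half-slab" of $H$:
\[
\H=\{a^iy^{3j}\mid i\in I_j\},
\]
with each $I_j$ an interval of residues modulo $p$. This cuts every $a$-cycle into two $a$-paths, one contributing to $U$ and one to $G\setminus U$. The interval endpoints will be chosen, using the multiplier $r$ from \eqref{eq:y-on-ai} and the action \eqref{eq:y-on-S}, so that the $x$- and $y$-edges inside $U$ glue these $a$-paths into trees or unicyclic pieces. The role of $X_0$ is that its image in $A_4$ induces a forest in the truncated tetrahedron: the set $\{1,x_3,x_1y,x_2y,y^2,x_3y^2\}$ has to be checked to give paths. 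Hence the only possible cycles in $\G[U]$ come from how the $a$-paths are joined.

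\textbf{Key steps.}
\begin{enumerate}[(1)]
\item Compute explicitly, for each $g\in X_0\cup X_1$, which cosets $gH$ and $g'H$ are joined by $x$- and $y$-edges, and how the joins act on the $H$-coordinate. A $y$-edge from $g h$ goes to $g y\, h^{y}$, where $h^{y}$ is given by $a^i\mapsto a^{ri}$ and fixes $y^3$.
\item Describe the components of $\G[U]$ as chains of $a$-paths, and show that each chain has at most one cycle. Here $|y|=3k$ with $k$ odd and $p\nmid k$ is used, so that the cycle structure of $y$-edges in $U$ is controlled.
\item By Lemma~\ref{iso}, $\G[G\setminus U]\cong\G[U]$, so condition (i) of Lemma~\ref{new-condition} holds for both sides at once.
\item For condition (ii), I will aim to make every component unicyclic. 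If that fails, I will pick a transversal $U'$ consisting of endpoints of the tree components, and take $W'$ as the corresponding transversal in $G\setminus U=x_1U$. The perfect matching is then sought among the $x$-edges, or the $y$-edges, leaving those endpoints.
\end{enumerate}

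\textbf{Main obstacle.} The hardest step is the explicit choice of $\H$, meaning the intervals $I_j$. It must simultaneously keep $\G[U]$ a pseudoforest and make the matching in (ii) exist, uniformly in $p$, $r$ and $k$. The $x$-matching is delicate because the isomorphism in Lemma~\ref{iso} is left multiplication by $x_1$, whereas $x$-edges are right multiplications. The edge $u\sim x_1u$ exists only when $u$ commutes with $x_1$, which happens for $u\in \sg{x_1}\times H$. I would therefore first try to place all tree-component roots in cosets $gH$ with $g\in\{1,x_1\}\cdot\{1\}$, that is, in $H\cup x_1H$. There the matching $u\mapsto ux=x_1u$ is automatic.
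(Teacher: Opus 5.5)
Your plan leaves the decisive step undone: no $\H$ is fixed, and neither the pseudoforest property nor the transversals and matching of Lemma~\ref{new-condition}(ii) are checked. More seriously, the shape you commit to cannot be completed for any choice of endpoints. Since $H\le C_G(S)$, for $h\in H$ we have $x_1yh\cdot x=x_1yx_1h=x_2yh$ and $yh\cdot x=yx_1h=x_3yh$. Here $x_1y,x_2y\in X_0$ and $y,x_3y\in X_1$. So the quotient edges $\{x_1yH,x_2yH\}$ and $\{yH,x_3yH\}$ lift to complete sets of rungs; these are exactly the $x$-edges of the matchings induced on $X_0$ and $X_1$ in the truncated tetrahedron. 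Concretely, let $P$ be an arc of an $a$-cycle of $H$. If $P\subseteq\H$, then $\G_1[U]$ contains the ladder $\{x_1y,x_2y\}P\cong P\times K_2$, with $a$-edges as rails and $x$-edges as rungs. If $P\subseteq H\setminus\H$, then $\G_1[U]$ contains $\{y,x_3y\}P\cong P\times K_2$. A ladder with $m$ rungs has $m-1$ independent cycles. Hence $\G_1[U]$ can be a pseudoforest only if every maximal arc of $\H$, and every maximal arc of $H\setminus\H$, on every $a$-cycle has at most two vertices. You cut $\sg{a}$ into one arc of $\H$ and its complementary arc. Since $p\ge 5$, one of the two has at least three vertices, so $\G_1[U]$ is never a pseudoforest.

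Repairing this forces each $a$-cycle to be split into at least $p/2$ arcs, as in the parity pattern of Lemma~\ref{G3}. You would then still have to control the $y$-edges, which run through $H,yH,y^2H$, advance the layer $y^{3j}\sg{a}$ to $y^{3j+3}\sg{a}$, and multiply $a$-exponents by $r$. You would also have to build the transversals and the matching uniformly in $p,r,k$, and your sketch gives no handle on any of this. The paper avoids the pseudoforest criterion for $\G_1$ altogether and uses Lemma~\ref{new-suffi} instead. There $\Sigma=\cay(G,\{x,a,a^{-1}\})$ is a disjoint union of $6k$ prisms $CL_p$ with $p$ odd. The subgraph $\Lambda=\cay(G,\{x,y,y^{-1}\})[L\cup aL]$, with $L=\sg{x,y}$, meets each prism in exactly two rungs and makes $\Sigma\cup\Lambda$ a parity subgraph. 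The required $\Z_3$-flow on $\Lambda$ vanishes only on some of these rungs, and never on both rungs of the same prism. It is obtained from a bipartite cubic graph, formed by deleting selected $x$-edges of $\Lambda[L]$ (resp.\ $\Lambda[aL]$) and suppressing the vertices of degree $2$.
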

\begin{proof}
Recall that $G \cong (\Z_2^2 \times \Z_p) \rtimes \Z_{3k}$ and 
$\G_1=\cay(G,\{x,a,a^{-1},y,y^{-1}\})$. Let $K=\sg{x, a}$ and $L=\sg{x,y}$. 
Then $K \cong \Z_{2p}$ and $L \cong \Z_2^2 \rtimes \Z_{3k}$.   
Define the subgraphs $\Sigma, \Lambda$ of $\G$ as
\begin{align*}
\Sigma =& \cay(G,\{x,a,a^{-1}\}), \\ 
\Lambda =& \cay(G,\{x,y,y^{-1}\})[L \cup aL].
\end{align*}
We settle the lemma by showing that the conditions in (i)--(iv) 
of Lemma~\ref{new-suffi} hold for $\Sigma$ and $\Lambda$. 
\medskip

\noindent $\bullet$ (i) \textit{$\Sigma$ is a spanning subgraph of $\G$ with components $\Sigma_1,\ldots,\Sigma_m$, and  
there exists an odd integer $t > 1$ such that $\Sigma_i \cong CL_t$ 
for each $i \in [1,m]$ (hence $|G|=2tm$).}
\medskip

It is clear that $\Sigma$ is a spanning graph and each component of $\Sigma$ is 
isomorphic to the graph $\cay(K,\{x,a,a^{-1}\})$, which is clearly isomorphic to 
$CL_p$. There are $6k$ components and we list them as $\Sigma_1,\ldots,\Sigma_{6k}$ 
(hence $m=6k$).
\medskip

\noindent $\bullet$ (ii) \textit{$\Sigma \cup \Lambda$ is a parity subgraph of $\G$.}  
\medskip

Let $u \in G$. It is obvious that as a vertex of $\Sigma \cup \Lambda$, 
the degree of $u$ is equal to $5$ if $u \in L \cup aL$ and $3$ otherwise, so 
$\G-\edge(\Sigma \cup \Lambda)$ is even.
\medskip

\noindent $\bullet$ (iii) \textit{For each $i \in [1,6k]$, 
$|\edge(\Lambda) \cap \edge(\Sigma_i)| \in \{1,2\}$ and 	
every edge in $\edge(\Lambda) \cap \edge(\Sigma_i)$ is a rung of $\Sigma_i$.}
\medskip 

The graph $\Lambda$ has two isomorphic components induced by the subgroup 
$L$ and its coset $aL$, respectively. Part of $\Lambda$ is depicted in Figure~\ref{fig1}.  
It is clear that the $x$-edges of $\Lambda$ are the common edges between 
$\Lambda$ and $\Sigma$, hence the first part of (iii) holds.

Fix a vertex $u \in L$. Then $u=x_\alpha y^j$ for some $\alpha \in [0,3]$ and 
$j \in [0,3k-1]$. Let $\Sigma_i$ be the component of $\Sigma$ containing $u$, where $i \in [1,6k]$. 
Then the edge $\{u,ux_1\}$ lies in $\edge(\Lambda) \cap \edge(\Sigma_i)$. 
If $u^\prime=x_{\alpha'}y^{j^\prime}$ is another vertex of $\Lambda[L]$, $u^\prime \ne u$, then $u^\prime \in \vertex(\Sigma_i)$ if and only if 
$uK=u^\prime K$. This reduces to $(x_\alpha x_\alpha^\prime)^{y^j}y^{j^\prime-j} \in K$, which shows that 
$j^\prime=j$ and $(x_\alpha x_\alpha^\prime)^{y^j}=x_1$, hence 
$u^\prime=ux_1$. We have shown that $\{u,ux_1\}$ is the only common edge between $\Lambda[L]$ and $\Sigma_i$. 

Since $auK=uK$, $au \in \vertex(\Sigma_i)$ also holds. The argument, used in the 
previous paragraph, can be copied to show that 
$\{au,aux_1\}$ is the only common edge between $\Lambda[aL]$ and $\Sigma_i$. 
Using also that $\Lambda$ has $12k$ $x$-edges and $\Sigma$ consists of $6k$ components, 
we conclude that the $12k$ $x$-edges are distributed equally among the 
components of $\Sigma$, implying that the condition in (iii) is satisfied.
\medskip

\noindent $\bullet$ (iv) \textit{$\Lambda$ admits a $\Z_3$-flow $(D,\varphi)$ such that 
\begin{enumerate}[(a)]
\item $\Null(\varphi) \subseteq \edge(\Lambda) \cap \edge(\Sigma)$. 
\item For every $i \in [1,6k]$, 
$\edge(\Lambda) \cap \edge(\Sigma_i) \not\subset \Null(\varphi)$.
\end{enumerate}}

The component $\Lambda[L]$ of $\Lambda$ is depicted in Figure~\ref{fig1}.  
We use $\Lambda[L]$ to define a new graph $\G'$.  For every $i \in [0,3k-1]$, delete the 
$x$-edges $\{y^{3i},x_1y^{3i}\}$ and $\{x_2y^{3i},x_3y^{3i}\}$ from $\Lambda[L]$; 
and then replace the induced $2$-path $(x_\alpha y^{3i-1},x_\alpha y^{3i},x_\alpha y^{3i+1})$ in the obtained graph with the edge $\{x_\alpha y^{3i-1},x_\alpha y^{3i+1}\}$, 
see Figure~\ref{fig1}. 
 
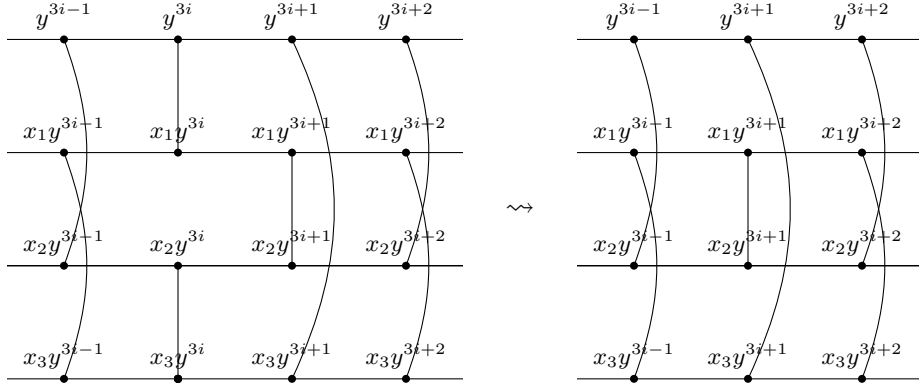
\begin{figure}[t]
\begin{center}
\begin{tikzpicture}[scale=1.5] ---
% vertices
\fill (0,0) circle (1pt);\fill (1,0) circle (1pt);
\fill (0,1) circle (1pt);
\fill (0,2) circle (1pt);
\fill (0,3) circle (1pt);
\draw (0,0) node[above]{\footnotesize $x_3y^{3i-1}$};
\draw (0,1) node[above]{\footnotesize $x_2y^{3i-1}$};
\draw (0,2) node[above]{\footnotesize $x_1y^{3i-1}$};
\draw (0,3) node[above]{\footnotesize $y^{3i-1}$};
\fill (1,0) circle (1pt);\fill (1,0) circle (1pt);
\fill (1,1) circle (1pt);
\fill (1,2) circle (1pt);
\fill (1,3) circle (1pt);
\draw (1,0) node[above]{\footnotesize $x_3y^{3i}$};
\draw (1,1) node[above]{\footnotesize $x_2y^{3i}$};
\draw (1,2) node[above]{\footnotesize $x_1y^{3i}$};
\draw (1,3) node[above]{\footnotesize $y^{3i}$};
\fill (2,0) circle (1pt);
\fill (2,1) circle (1pt);
\fill (2,2) circle (1pt);
\fill (2,3) circle (1pt);
\draw (2,0) node[above]{\footnotesize $x_3y^{3i+1}$};
\draw (2,1) node[above]{\footnotesize $x_2y^{3i+1}$};
\draw (2,2) node[above]{\footnotesize $x_1y^{3i+1}$};
\draw (2,3) node[above]{\footnotesize $y^{3i+1}$};
\fill (3,0) circle (1pt);
\fill (3,1) circle (1pt);
\fill (3,2) circle (1pt);
\fill (3,3) circle (1pt);
\draw (3,0) node[above]{\footnotesize $x_3y^{3i+2}$};
\draw (3,1) node[above]{\footnotesize $x_2y^{3i+2}$};
\draw (3,2) node[above]{\footnotesize $x_1y^{3i+2}$};
\draw (3,3) node[above]{\footnotesize $y^{3i+2}$};
\fill (5,0) circle (1pt);\fill (1,0) circle (1pt);
\fill (5,1) circle (1pt);
\fill (5,2) circle (1pt);
\fill (5,3) circle (1pt);
\draw (5,0) node[above]{\footnotesize $x_3y^{3i-1}$};
\draw (5,1) node[above]{\footnotesize $x_2y^{3i-1}$};
\draw (5,2) node[above]{\footnotesize $x_1y^{3i-1}$};
\draw (5,3) node[above]{\footnotesize $y^{3i-1}$};
\fill (6,0) circle (1pt);\fill (1,0) circle (1pt);
\fill (6,1) circle (1pt);
\fill (6,2) circle (1pt);
\fill (6,3) circle (1pt);
\draw (6,0) node[above]{\footnotesize $x_3y^{3i+1}$};
\draw (6,1) node[above]{\footnotesize $x_2y^{3i+1}$};
\draw (6,2) node[above]{\footnotesize $x_1y^{3i+1}$};
\draw (6,3) node[above]{\footnotesize $y^{3i+1}$};
\fill (7,0) circle (1pt);
\fill (7,1) circle (1pt);
\fill (7,2) circle (1pt);
\fill (7,3) circle (1pt);
\draw (7,0) node[above]{\footnotesize $x_3y^{3i+2}$};
\draw (7,1) node[above]{\footnotesize $x_2y^{3i+2}$};
\draw (7,2) node[above]{\footnotesize $x_1y^{3i+2}$};
\draw (7,3) node[above]{\footnotesize $y^{3i+2}$};
% edges 
\draw (-0.5,0) -- (3.5,0) (-0.5,1) -- (3.5,1) (-0.5,1) -- (3.5,1) (-0.5,2) -- (3.5,2) (-0.5,3) -- (3.5,3); 
\draw (4.5,0) -- (7.5,0) (4.5,1) -- (7.5,1) (4.5,1) -- (7.5,1) (4.5,2) -- (7.5,2) (4.5,3) -- (7.5,3); 
% x-
\draw[bend left=-20] (0,0) to (0,2); 
\draw[bend left=-20] (0,1) to (0,3); 
\draw (1,0) -- (1,1) (1,2) -- (1,3); 
\draw (2,1) -- (2,2);
\draw[bend left=-25] (2,0) to (2,3);
\draw[bend left=-20] (3,0) to (3,2); 
\draw[bend left=-20] (3,1) to (3,3); 
\draw[bend left=-20] (5,0) to (5,2); 
\draw[bend left=-20] (5,1) to (5,3); 
\draw (6,1) -- (6,2);
\draw[bend left=-25] (6,0) to (6,3);
\draw[bend left=-20] (7,0) to (7,2); 
\draw[bend left=-20] (7,1) to (7,3); 
%\draw[line width=2] (1,0.8) -- (1,1.6) (6,0.8) -- (6,1.6) (1,4.2) -- (1,5) (6,4.2) -- (6,5);  
\draw (4,1.5) node{$\rightsquigarrow$};
\end{tikzpicture}
\caption{The graph $\Lambda[L]$ (left) and the graph $\G'$ (right) obtained from 
$\Lambda[L]$.}
\label{fig1}
\end{center}
\end{figure}

It is clear that $\G'$ is a $3$-valent graph. Define the subsets $U, W \subseteq \vertex(\G')$ as 
\begin{align*}
U &= \{y^{3i+1}, x_1y^{3i+1}, x_2y^{3i+2}, x_3y^{3i+2} \mid i \in [0,k-1]\}, \\ 
W &=\{y^{3i+2}, x_1y^{3i+2}, x_2y^{3i+1}, x_3y^{3i+1} \mid i \in [0,k-1]\}.
\end{align*}
A direct check shows that $\G'$ is also bipartite with bipartition parts $U$ and $W$, and 
therefore, $\G'$ admits a nowhere-zero $\Z_3$-flow (see ~\cite[Proposition~6.4.2]{Dbook}).
Consequently,  there exists a $\Z_3$-flow 
$(D_1,\varphi_1)$ in $\Lambda[L]$ for which 
\[
\Null(\varphi_1)=\big\{\{y^{3i},x_1y^{3i}\}, \{x_2y^{3i},x_3y^{3i}\} \mid i \in [0,k-1]\big\}.
\]
Similarly, there exits a $\Z_3$-flow $(D_2,\varphi_2)$ in $\Lambda[aL]$ for which 
\[
\Null(\varphi_2)=\big\{\{ay^{3i+2},ax_2y^{3i+2}\}, \{ax_1y^{3i+2},ax_3y^{3i+2}\}
\mid i \in [0,k-1]\big\}.
\]
Then define the $\Z_3$-flow $(D,\varphi)$ in $\Lambda$ as 
for every edge $e \in \edge(\Lambda)$, if $e$ lies in $L$, then let 
$D(e)=D_1(e)$, $\varphi(e)=\varphi_1(e)$, and let $D(e)=D_2(e)$, 
$\varphi(e)=\varphi_2(e)$ otherwise. 

We claim that $(D,\varphi)$ is a required $\Z_3$-flow. It is obvious that the 
condition in (a) is satisfied. We have shown in the 
proof of (iii) that for every $i \in [1,6k]$, the two common edges between 
$\Lambda$ and $\Sigma_i$ are $\{u,ux\}$ and 
$\{au,aux\}$ for some $u \in L$. The construction of $\varphi$ shows that 
$\varphi$ cannot vanish at both edges, by which the condition in (b) is also met. 
\end{proof}

In the proof of the existence of a nowhere-zero $3$-flow in the graph $\G_2$ we 
use Lemma~\ref{new-condition}. 
In the following lemma we define a partition of 
$G$ into subsets $U$ and $W$ and show that the condition in (i) of 
Lemma~\ref{new-condition} 
holds for $\G[U]$ and $\G[W]$; and then in the subsequent lemma we also prove the fulfillment of 
the condition in (ii) of Lemma~\ref{new-condition}.

\begin{lem}\label{G2-1}
Let $\varepsilon \in \{0,1\}$ and  
\[
\H_\varepsilon=\{a^iy^{3j} \mid i \in [0,p-1], j 
\in [0,k-1], j  \equiv \varepsilon\!\!\!\!\!\pmod 2\}. 
\]
Define the subset $U \subseteq G$ as $U=X_0 \H_0 \cup X_1 \H_1$ and let 
$W=G \setminus U$. Then $\Gamma_2[U]$ and $\G_2[W]$ are pseudoforests. 
\end{lem}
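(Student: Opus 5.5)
By Lemma~\ref{iso}, applied with $\H=\H_0$, it suffices to treat $\G_2[U]$. Indeed, since $k$ is odd, the sets $\H_0$ and $\H_1$ partition $H=\sg{a,y^3}$, so $\H_1=H\setminus\H_0$ and $U=X_0\H_0\cup X_1(H\setminus \H_0)$. Moreover $\H_0$ is a non-empty proper subset of $H$, because $k\ge 3$: if $k=1$ then $\H_1=\emptyset$. I will assume $k\ge 3$ here, and the case $k=1$ needs a separate check. Lemma~\ref{iso} then gives $\G_2[W]\cong\G_2[U]$. By Lemma~\ref{pforest}(i), to show that $\G_2[U]$ is a pseudoforest it is enough to exhibit a $(0,1)$-orientation of it. So I will describe, for every $u\in U$, which of its five neighbours $ux,\ uy,\ uy^{-1},\ uay,\ u(ay)^{-1}$ lie in $U$, and then choose one outgoing edge per vertex so that every edge is oriented.

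\textbf{Step 1: membership in $U$.} Write every $g\in G$ uniquely as $g=s\,y^{e}h$ with $s\in S$, $e\in\{0,1,2\}$ and $h=a^iy^{3j}\in H$. The coset $gH$ corresponds to an element of the order-$12$ quotient $G/H\cong S\rtimes\Z_3\cong A_4$, and $g\in U$ if and only if either $sy^e\in X_0$ and $j$ is even, or $sy^e\in X_1$ and $j$ is odd.

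Next I will compute each neighbour in this normal form, using \eqref{eq:y-on-S} and \eqref{eq:y-on-ai} together with $y^3\in H$.
\begin{itemize}
\item Right multiplication by $x$ gives $sy^ehx=s\,x^{y^{-e}}y^eh$, since $H$ centralises $S$; only the $S$-part changes and $j$ is unchanged.
\item Right multiplication by $y$ or by $ay$ gives $s\,y^{e+1}h'$ with $h'\in H$, where the $a$-exponent changes but $j$ is unchanged when $e\in\{0,1\}$, and $j$ goes to $j+1$ when $e=2$.
\end{itemize}
Hence the exponent $j$ changes only along $y$- and $ay$-edges leaving the layer $e=2$. Its parity then flips, except at the wrap $j=k-1\to 0$, where both values are even because $k$ is odd.

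\textbf{Step 2: the local table.} Combining Step 1 with the explicit sets $X_0$ and $X_1$, I will tabulate, for each of the $12$ cosets in $G/H$ and each parity of $j$, which of the five neighbours stay in $U$. Modulo $H$ the graph is the multigraph $\cay(A_4,\{x,y,y,y^{-1},y^{-1}\})$ of Example~\ref{A4}. The choice of $X_0,X_1$ is such that $X_0$ and $X_1=x_1X_0$ are complementary, and I expect each vertex of $U$ to have very few neighbours inside $U$. The typical pattern should be: at most one $x$-neighbour, and the $y$- and $ay$-neighbours coming in pairs that leave the same $H$-coset in the same direction.

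\textbf{Step 3: the orientation.} Guided by the table, I will orient every $y$- and $ay$-edge of $\G_2[U]$ forward, in the direction of increasing $e$ (modulo $3$ together with the shift of $j$). I will orient the $x$-edges towards the endpoint that has no forward edge. Then I will verify that no vertex receives two outgoing edges. If some vertex has both $uy$ and $uay$ in $U$, I will instead use a cyclic orientation in the $a$-direction. In that case the subgraph spanned by such edges is a union of cycles $u\to uay\to\cdots$ within a fixed $j$-layer, which are traversed cyclically with trees hanging on them.

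\textbf{Main obstacle.} The delicate point is the wrap-around $j=k-1\to j=0$, where the parity of $j$ fails to alternate. There extra edges could survive inside $U$ and create a second cycle in a component. I will check these boundary cosets separately. The claim to verify is that the surviving edges there only close up the cycle already present in that component (or join trees), so that out-degree at most $1$ is preserved.
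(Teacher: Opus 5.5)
\textbf{Gap: the edges of $\G_2[U]$ are never determined.} Reducing to $\G_2[U]$ via Lemma~\ref{iso} and certifying a pseudoforest by a $(0,1)$-orientation (Lemma~\ref{pforest}(i)) are both legitimate. However, Step~2 is only announced (``I will tabulate'', ``I expect''), and your main obstacle is left as ``the claim to verify''. As written, nothing excludes a component with two cycles, so the lemma is not proved. The missing computation is the whole content of the lemma, and it is short.

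Write $u=sy^eh$ as you do. The sets $X_0,X_1$ give the parity pattern $(\H_0,\H_1,\H_0)$ on levels $e=0,1,2$ in the rows $s\in\{1,x_3\}$, and $(\H_1,\H_0,\H_1)$ in the rows $s\in\{x_1,x_2\}$. This has three consequences.
\begin{enumerate}[(a)]
\item A $y$- or $ay$-step out of level $e\in\{0,1\}$ keeps the row and $j$, so it leaves $U$.
\item A step from level $2$ to level $0$ with $j<k-1$ flips the parity of $j$, so it also leaves $U$.
\item Only the wrap $j=k-1\to 0$ stays in $U$, and only in the rows $\{1,x_3\}$.
\end{enumerate}
Hence the $y$- and $ay$-edges of $\G_2[U]$ are exactly $\{x_\alpha a^iy^{-1},x_\alpha a^i\}$ and $\{x_\alpha a^iy^{-1},x_\alpha a^{i+r}\}$ with $\alpha\in\{0,3\}$. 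These form two disjoint $2p$-cycles.

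For the $x$-edges, right multiplication by $x$ multiplies the row by $x_1,x_3,x_2$ at levels $0,1,2$. For fixed $e$ and $j$, the rows $s$ with $sy^eh\in U$ form exactly one of the classes $\{1,x_3\}$, $\{x_1,x_2\}$, and only $x_3$ preserves these classes. So $ux\in U$ if and only if $u$ is at level $1$, where no $y$- or $ay$-edge of $\G_2[U]$ has an endpoint. Thus $\G_2[U]$ is two $2p$-cycles, a matching and isolated vertices. This is exactly the paper's proof, and no orientation is needed.

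\textbf{Your Step~3 heuristics are off.} Every $y$- or $ay$-edge of $\G_2[U]$ is a wrap edge. Its level-$2$ endpoint $x_\alpha a^iy^{-1}$ has both $uy$ and $uay$ in $U$, so the ``forward'' orientation fails at every such edge, not just occasionally. The fallback cycles alternate between level $2$ with $j=k-1$ and level $0$ with $j=0$; they do not lie in a fixed $j$-layer. There is also no pre-existing cycle for the wrap edges to close up: without them, $\G_2[U]$ is just a matching plus isolated vertices.

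\textbf{The case $k=1$ needs no separate check.} The proof of Lemma~\ref{iso} only uses $x_1X_0=X_1$, so $G\setminus U=x_1U$ holds for every $\H\subseteq H$, including $\H=H$. The edge computation above also goes through verbatim when $k=1$, where every vertex has $j=0=k-1$.
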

\begin{proof}
By Lemma~\ref{iso}, $\G_2[U] \cong \G_2[W]$, hence we may focus on $\G_2[U]$. 
We show that each component of $\G_2[U]$ is a cycle or an edge or an isolated 
vertex.  

The connection set $X$ of $\G_2$ is equal to $\{x,ay,(ay)^{-1},y,y^{-1}\}$. 
By definition, $U$ decomposes as 
\begin{equation}\label{eq:U}
\begin{matrix}
U & = & \H_0  & \cup &  x_1\H_1 & \cup &  x_2\H_1  & \cup & x_3\H_0 & \cup \\
   &     & y\H_1 & \cup & x_1y\H_0 & \cup & x_2y\H_0  & \cup & x_3y\H_1 & \cup \\
   &     & y^2\H_0 & \cup & x_1y^2\H_1 & \cup & x_2y^2\H_1 & \cup & x_3y^2\H_0. 
\end{matrix}
\end{equation}

Fix a vertex $u \in U$. Then $u \in x_\alpha y^\ell \H_\varepsilon$ for 
some $\alpha \in [0,3]$, $\ell \in [0,2]$ and $\varepsilon \in [0,1]$. 
If $\ell \ne 2$, then $uy \in x_\alpha y^{\ell+1}\H_{\varepsilon}$, hence 
$uy \notin U$ (see~\eqref{eq:U}). Let $\ell=2$ and $u=x_\alpha y^{2} (a^{i} y^{3j})$, where 
$i \in [0,p-1]$ and $j \in [0,k-1]$. 
If $j < k-1$, then $uy \in x_\alpha \H_{1-\varepsilon}$, hence 
$uy$ is outside $U$ again. If $j=k-1$, then $u=a^i y^{-1}$ or 
$x_3a^i y^{-1}$ and $uy \in U$. We conclude that  
the $y$-edges of $\G[U]$ can be listed as 
$\{a^iy^{-1},a^i\}$ and $\{x_3a^iy^{-1},x_3a^i\}$, 
$i \in [0,p-1]$.  Repeating the argument gives that 
$uay \in U$ if and only $u=a^iy^{-1}$ or $x_3a^iy^{-1}$ for some $i \in [0,p-1]$, and 
using also \eqref{eq:y-on-ai}, we find that the $ay$-edges of $\G_2[U]$ are exactly 
$\{x_\alpha a^iy^{-1},x_\alpha a^{i+r}\}$, $\alpha=0$ or $3$ and $i \in [0,p-1]$; 
and therefore, the $y$- and $ay$-edges induce two disjoint $2p$-cycles in 
$\G_2[U]$. 

We turn to the $x$-edges of $\G_2[U]$. These clearly form a matching. 
It follows from \eqref{eq:y-on-S} and \eqref{eq:U} that $ux \in U$ if and only if  $\ell=1$. 
Thus no $x$-edge is incident with a $y$- or an $ay$-edge, hence  
$\G_2[U]$ is a disjoint union of cycles, isolated vertices and edges.  
\end{proof}

\begin{lem}\label{G2-2}
Assuming the notations in Lemma~\ref{G2-1}, there exist a transversal 
$U'$ of $\G_2[U]$ and a transversal $W'$ of $\G_2[W]$ such that 
$\G_2[U',W']$ has a perfect matching. 
\end{lem}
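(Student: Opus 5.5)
The plan is to read off the component structure of $\G_2[U]$ from the proof of Lemma~\ref{G2-1}, transfer it to $\G_2[W]$ through the automorphism $g\mapsto x_1g$ of Lemma~\ref{iso}, and then write down $U'$, $W'$ and a matching explicitly. By Lemma~\ref{G2-1}, the components of $\G_2[U]$ are of three kinds:
\begin{enumerate}[(1)]
\item the two $2p$-cycles formed by the $y$- and $ay$-edges on the vertices $x_\alpha a^i y^{-1}$ and $x_\alpha a^i$ with $\alpha\in\{0,3\}$;
\item the $x$-edges $\{u,ux\}$ with $u\in x_\alpha y\H_\varepsilon$ (the layer $\ell=1$ in \eqref{eq:U});
\item isolated vertices, namely all remaining vertices of the layers $\ell\in\{0,2\}$ that are not on the two cycles.
\end{enumerate}
By Lemma~\ref{pforest}(ii), a transversal $U'$ of $\G_2[U]$ must contain every isolated vertex, exactly one endpoint of each $x$-edge in (2), and no vertex of the cycles in (1). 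Since $W=x_1U$ and $g\mapsto x_1g$ is a graph automorphism, $\G_2[W]$ has the mirror structure, with cycles, $x$-edges and isolated vertices obtained by left multiplication by $x_1$. In particular $|U'|=|W'|$ for any choice of transversals, which is consistent with the existence of a perfect matching.

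I would build the perfect matching of $\G_2[U',W']$ from two families of edges.
\begin{enumerate}[(a)]
\item For an isolated vertex $u$ of $\G_2[U]$ (layer $\ell\in\{0,2\}$), I would use its $x$-edge $\{u,ux\}$. By the proof of Lemma~\ref{G2-1}, $ux\notin U$, so $ux\in W$. Using \eqref{eq:y-on-S}, I would identify which $x_\beta y^\ell\H_{\varepsilon'}$ contains $ux$, and check that $ux$ is isolated in $\G_2[W]$. Its $x$-neighbour is $u\in U$, so the question is whether $ux$ has a $y$- or $ay$-neighbour inside $W$; this amounts to checking that $x_1^{-1}ux$ is isolated in $\G_2[U]$.
\item For an $x$-edge component $\{u,ux\}$ in layer $\ell=1$, I would choose $u'\in\{u,ux\}$ into $U'$ so that $u'y$ (or $u'y^{-1}$), which lies in $W$ by the computations in Lemma~\ref{G2-1}, is the endpoint selected for $W'$ in the corresponding $x$-edge component of $\G_2[W]$. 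Then the $y$-edge $\{u',u'y\}$ is the matching edge.
\end{enumerate}
I would make the choice in (b) uniformly, for example always taking the endpoint with $\alpha\in\{0,1\}$, and then define $W'$ as the set of partners. Finally I would verify that $W'$ meets each $x$-edge component of $\G_2[W]$ exactly once, contains all isolated vertices of $\G_2[W]$, and avoids its two cycles $x_1\cdot(\text{cycles of }\G_2[U])$.

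The main obstacle is the bookkeeping near the layers touching the cycles, namely the vertices $x_\alpha a^i y^{-1}$ and $x_\alpha a^i$ with $j$ at the boundary $j=k-1$ or $j=0$. There the $x$-neighbour or $y$-neighbour of an isolated vertex could fall on a cycle of $\G_2[W]$, which is forbidden for $W'$. Here I would first try to swap, for those finitely many cosets, between matching by $x$-edges and by $y^{\pm1}$- or $(ay)^{\pm1}$-edges. I would use \eqref{eq:y-on-ai} to track the exponents of $a$, and also use that $k$ is odd, so that $\H_0$ and $\H_1$ are unbalanced by exactly one $\sg{a}$-coset. This imbalance is what produces the cycles, and it should be absorbed precisely by the vertices on them, which are excluded from both transversals.
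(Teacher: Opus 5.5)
\textbf{There is a genuine gap: the matching scheme (a)+(b) cannot work.} Your description of the components of $\G_2[U]$ is correct. Your choice of endpoints $y\H_1\cup x_1y\H_0$ (those with $\alpha\in\{0,1\}$) is exactly the paper's $U'$.

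\emph{Why (b) fails.} The $x$-edge components of $\G_2[W]=x_1\G_2[U]$, namely $\{yh,x_3yh\}$ ($h\in\H_0$) and $\{x_1yh,x_2yh\}$ ($h\in\H_1$), also lie in layer $\ell=1$. But $y^{\pm1}$- and $(ay)^{\pm1}$-edges always change the layer. Also, for $u'\in y\H_1\cup x_1y\H_0$ the $x$-neighbour $u'x=x_3u'$ lies in $U$. So a vertex on an edge component of $\G_2[U]$ has no neighbour at all on an edge component of $\G_2[W]$. Indeed, $u'y^{\pm1}$ lands in layer $2$ or $0$, where $\G_2[W]$ has only isolated vertices and cycle vertices.

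\emph{Why (a) makes it worse.} Step (a) already uses up every isolated vertex of $\G_2[W]$. On layer $0$ the map $u\mapsto ux$ is $u\mapsto x_1u$, and on layer $2$ it is $u\mapsto x_2u$. In both cases it maps the isolated vertices of $\G_2[U]$ bijectively onto those of $\G_2[W]$. So the trouble you anticipate, with $x$-neighbours falling on cycles, never occurs. Consequently the partners chosen in (b) either coincide with partners from (a) or lie on cycles of $\G_2[W]$. Your $W'$ then contains no vertex of the $|H|=pk$ edge components of $\G_2[W]$, so it is not a transversal. This is a global defect of size $|H|$; a swap on the few $P$-cosets next to the cycles ($P=\sg{a}$) cannot repair it.

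\textbf{The missing idea.} The matching must cross types. Edge components of $\G_2[U]$ go, via $y$-edges from layer $1$ to layer $2$, to isolated vertices of $\G_2[W]$. Correspondingly, $pk$ isolated vertices of $\G_2[U]$ go, via $y^{-1}$-edges from layer $2$ to layer $1$, to the edge components of $\G_2[W]$. So not all isolated vertices can use their $x$-edges; the rest must be routed along $y^{\pm1}$-edges so that the map stays injective.

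The paper sets:
\begin{itemize}
\item $\nu(u)=ux$ only on $\H_0\cup x_1\H_1$;
\item $\nu(u)=uy$ on $y\H_1\cup x_1y\H_0\cup x_2y^2\H_1\cup x_3y^2\H_0$;
\item $\nu(u)=uy^{-1}$ on $y^2\H_0\cup x_1y^2\H_1\cup x_2\H_1\cup x_3\H_0$;
\end{itemize}
in each case removing the cycle vertices.

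Only after this does the boundary bookkeeping appear, and it is a single swap:
\begin{itemize}
\item $x_1Py^{-2}$ must use $y^{-1}$, because $x_1Py^{-2}\cdot y=x_1Py^{-1}$ lies on a cycle of $\G_2[W]$;
\item $Py^{-3}$ uses $y$ instead of $x$, to cover $Py^{-2}\subset y\H_0$, whose natural preimage $Py^{-1}$ lies on a cycle of $\G_2[U]$.
\end{itemize}
With $\H_1y^3=\H_0\setminus P$ and $\H_0y^3=\H_1\cup P$ one then checks $\nu(U')=x_1U'$, which is a transversal of $\G_2[W]$.
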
 
\begin{proof}
We begin with some properties of $G$. Let $P=\sg{a}$. Then $P < H < G$ and the elements 
$x_\alpha y^i$, $\alpha \in [0,3]$ and $i\in [0,2]$, form a transversal of $H$ in $G$. 
Furthermore, $H=\H_0 \cup \H_1$ and we have the following identities: 
\begin{equation}\label{eq:H's}
\H_1y^3 =\H_0 \setminus P~\text{and}~\H_0y^3=\H_1 \cup P.
 \end{equation}

Let us turn to the graph $\G_2[U]$. We have seen in the proof of Lemma~\ref{G2-1} 
that $\G_2[U]$ consists of two components, which are cycles of length $2p$ and cover 
the vertices in the set $P \cup Py^{-1} \cup x_3P \cup x_3Py^{-1}$; components, which are edges, 
and these join the vertices in the set $y\H_1 \cup  x_1y\H_0$ with the vertices in the set $x_3y\H_1 \cup  x_2y\H_0$; and isolated vertices. This shows that the subset 
\begin{align*}
U':= & ~ \H_0~\cup~x_1\H_1~\cup~x_2\H_1~\cup~x_3\H_0~\cup~y\H_1\cup~x_1y\H_0~
\cup 
\\
& ~y^2\H_0~\cup~x_1y^2\H_1~\cup~x_2y^2\H_1~\cup~x_3y^2\H_0~  
\setminus~(P~\cup~x_3P~\cup~Py^{-1}~\cup~x_3Py^{-1})
\end{align*}
is a transversal of $\G_2[U]$. We settle the lemma by finding   
a mapping $\nu : U' \to G$ so that %$\nu$ is injective,  
$\nu(U')$ is a transversal of $\G_2[W]$ and 
for every $u \in U'$, $u$ and $\nu(u)$ are adjacent in $\G_2$.  For this purpose, 
partition $U'$ into subsets $U'_x$, $U'_y$ and $U'_{y^{-1}}$ defined as  
\begin{align*}
U'_x=&~\H_0\cup~x_1\H_1~\setminus~(P~\cup~Py^{-3}), \\
U'_y=& ~Py^{-3}~\cup~y \H_1~\cup~x_1y\H_0~\cup~x_2y^2\H_1~\cup~
x_3y^2\H_0~\setminus~(x_1Py^{-2}~\cup~x_3Py^{-1}),\\
U'_{y^{-1}}=& ~ x_1Py^{-2}~\cup~y^2\H_0~\cup~x_1y^2\H_1~\cup~x_2\H_1
~\cup~x_3\H_0~\setminus~(x_3P~\cup~Py^{-1}).
\end{align*}

Then define the function $\nu : U' \to G$ as for every $u \in U'$, 
$\nu(u)=uz$ if $u \in U'_{z}$ ($z \in \{x,y,y^{-1}\}$). We claim that $\nu$ has the required properties. 
First, by construction, $u \sim \nu(u)$ for every $u \in U'$. 
Using~\eqref{eq:H's}, we compute that  
\begin{align*}
\nu(U') = & ~ \Big( x_1\H_0~\cup~\H_1~\setminus~(x_1P~\cup~x_1Py^{-3}) \Big)~\cup \\
& ~  \Big( Py^{-2}~\cup~y^2 \H_1~\cup~x_1y^2\H_0~\cup~x_2\H_0~\cup~
x_3\H_1~\setminus~(x_2P~\cup~x_1Py^{-1}) \Big)~\cup \\
& ~ \Big( x_1Py^{-3}~\cup~y\H_0~\cup~x_1y\H_1~\cup~x_2y^2\H_0~\cup~x_3y^2\H_1~
\setminus (x_2Py^{-1} \cup Py^{-2}) \Big) \\
= & \Big( 
\H_1~\cup~x_1\H_0~\cup~x_2\H_0~\cup~ x_3\H_1~\setminus~(x_1P~\cup~x_2P) \Big)~ 
\cup~y\H_0~\cup~x_1y\H_1~\cup \\ 
& \Big( y^2\H_1~\cup~x_1y^2\H_0~\cup~x_2y^2 \H_0~\cup~ x_3y^2\H_1~\setminus~(x_1Py^{-1}~\cup~x_2Py^{-1}) \Big) \\ 
= &~ x_1U'.
\end{align*}
Since $x_1U'$ is a transversal of $\G_2[W]$, it follows that $\nu$ is indeed a desired 
mapping.
\end{proof}
%---------------------------------------------------------------------------------------------------%
\subsection{Graphs G3 and G4}\label{sec:4.2}

In this case we have that $G \cong A_4 \times \Z_p$, 
$y \in G$ and $|y|=3p$.  The action of $y$ on the Sylow $2$-subgroup $S$ of $G$ is 
defined in \eqref{eq:y-on-S}. Note that $H=\sg{a,y^3}=\sg{y^3}$ is the Sylow 
$p$-subgroup of $G$.  

Every element of $G$ is uniquely written as the product 
$x_\alpha y^i a^j$, where $\alpha \in [0,3]$, $i \in [0,2]$ and $j \in [0,p-1]$. 
If $y^3=a^s$ for some $s\in [1,p-1]$, then the edges of $\G_3 \cup \G_4$ can be listed as
\begin{equation}\label{eq:edges}
\begin{array}{clll}
\text{($x$-edges)} & 
x_\alpha a^j \sim x_1 x_\alpha a^j & x_\alpha ya^j \sim x_3 x_\alpha ya^j & 
x_\alpha y^2a^j  \sim x_2 x_\alpha y^2a^j, \\ 
\text{($y$-edges)} &
x_\alpha a^j   \sim x_\alpha ya^j & x_\alpha ya^j \sim x_\alpha y^2a^j &  
x_\alpha y^2a^j \sim x_\alpha a^{j+s},  \\ 
\text{($ay$-edges)} &
x_\alpha a^j  \sim x_\alpha ya^{j+1} & x_\alpha ya^j \sim x_\alpha y^2a^{j+1} & 
x_\alpha y^2a^j \sim x_\alpha a^{j+s+1},  \\
\text{($a$-edges)} &  x_\alpha a^j \sim x_\alpha a^{j+1} & 
x_\alpha ya^j \sim x_\alpha ya^{j+1} & 
x_\alpha y^2a^j \sim x_\alpha y^2a^{j+1}.
\end{array}
\end{equation}

We prove the existence of a nowhere-zero $3$-flow in both $\G_3$ and $\G_4$ 
using Lemma~\ref{new-condition}.  First we deal with $\G_4$ in the special case 
where $y^3=a^{-1}$. 

\begin{lem}\label{G4--1}
$\G_4$ admits a nowhere-zero $3$-flow, provided that $y^3=a^{-1}$.
\end{lem}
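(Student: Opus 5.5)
The plan is to apply Lemma~\ref{new-condition} to $\G_4$, as was done for $\G_2$. The starting observation is that the hypothesis $y^3=a^{-1}$, i.e.\ $s=p-1$ in \eqref{eq:edges}, makes the $ay$-edges degenerate in a useful way. The third $ay$-edge becomes $x_\alpha y^2a^j\sim x_\alpha a^{j}$. Since $a$ is central in $G\cong A_4\times\Z_p$, we have $(ay)^3=a^3y^3=a^2$. Thus the $y$- and $ay$-edges together join the three $H$-cosets $x_\alpha H,\ x_\alpha yH,\ x_\alpha y^2H$ in a very regular pattern, and the $a$-edges are $p$-cycles inside each $H$-coset.

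\textbf{Step 1 (the partition).} Take a non-empty proper subset $\H\subset H=\sg{a}$, put $U=X_0\H\cup X_1(H\setminus\H)$ and $W=G\setminus U$. By Lemma~\ref{iso}, $W=x_1U$ and $\G_4[U]\cong\G_4[W]$, so only $\G_4[U]$ needs to be analysed. I would first try an ``alternating'' choice such as $\H=\{a^{j}\mid j\in[0,p-1],\ j \text{ even}\}$. Because $p$ is odd, this set contains exactly one pair of $a$-adjacent elements, namely $a^{p-1},a^0$, while $H\setminus\H$ contains no $a$-edge at all.

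\textbf{Step 2 (pseudoforest check).} Using the edge list \eqref{eq:edges} together with the decomposition of $U$ into twelve pieces $x_\alpha y^\ell\H$ or $x_\alpha y^\ell(H\setminus\H)$ (as in \eqref{eq:U}), I would list the edges of $\G_4[U]$ type by type.
\begin{itemize}
\item For the $x$-edges, one must determine which coset pairs $x_\alpha y^\ell H$, $x_1x_\alpha y^\ell H$ (resp.\ $x_3$, $x_2$) both meet $U$ in the same index set.
\item For the $y$- and $ay$-edges, the shift $j\mapsto j$ or $j\mapsto j+1$ moves between $\H$ and its complement, so only few of these edges survive.
\item For the $a$-edges, only those coming from the single adjacent pair in $\H$ survive.
\end{itemize}
The goal is to show that every component of $\G_4[U]$ is a path, an edge, an isolated vertex, or a single short cycle, so that $\G_4[U]$ is a pseudoforest. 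If the alternating $\H$ creates a component with two cycles, I would modify $\H$ near the ``defect'' $a^{p-1},a^0$, for instance by shifting it by one position in one of the cosets.

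\textbf{Step 3 (transversals and matching).} Read off a transversal $U'$ of $\G_4[U]$, consisting of one vertex per tree component and none from the cyclic ones. Then construct, as in Lemma~\ref{G2-2}, a map $\nu(u)=uz$ with $z\in\{x,y,y^{-1},ay,(ay)^{-1},a,a^{-1}\}$ chosen piecewise on a partition of $U'$. The aim is to arrange $\nu(U')=x_1U'$. Since $x_1U'$ is a transversal of $\G_4[W]=x_1\G_4[U]$, this gives the perfect matching in $\G_4[U',W']$, and Lemma~\ref{new-condition} then yields the nowhere-zero $\Z_3$-flow.

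I expect Step 3 to be the main obstacle: choosing the pieces of $U'$ so that the images tile $x_1U'$ exactly, with the boundary corrections at $a^{p-1},a^0$ handled the way $P$ and $Py^{-3}$ were handled in Lemma~\ref{G2-2}. If no matching works, the fallback is to choose $\H$ so that every component of $\G_4[U]$ is unicyclic, which satisfies condition~(ii) of Lemma~\ref{new-condition} directly.
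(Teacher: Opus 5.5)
\textbf{There is a genuine gap.} The family of partitions you propose to search, $U=X_0\H\cup X_1(H\setminus\H)$, can never satisfy Lemma~\ref{new-condition} when $y^3=a^{-1}$. So neither Step~3 nor your fallback can be completed.

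There is also a misreading: $\G_4$ has no $a$-edges; the $a$-edges in \eqref{eq:edges} belong to $\G_3$. Under $y^3=a^{-1}$ you have $ay=y^{-2}$, so $\G_4=\cay(G,\{x,y^{\pm1},y^{\pm2}\})$ and $\Sigma\cong\cay(\Z_{3p},\{\pm1,\pm2\})$. Your $a$-edge bookkeeping in Step~2 and the choices $z=a^{\pm1}$ in Step~3 therefore do not give edges of $\G_4$.

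\emph{Why the family fails.} Write $\H=\{y^{3k}\mid k\in K\}$ with $\emptyset\ne K\subsetneq\Z_p$.
Then $\Y=U\cap Y$ is the union of the blocks $\{y^{3k},y^{3k+2}\}$ ($k\in K$) and $\{y^{3k+1}\}$ ($k\notin K$). Each block induces a path. Two consecutive blocks are joined by exactly one edge unless both indices lie outside $K$, and there are no further edges. The set $Y\setminus\Y$ looks the same with $K$ replaced by its complement.

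As in the proof of Lemma~\ref{sigma's-1}, $\G_4[U]$ is $\Sigma_0\cup x_3\Sigma_0$ plus the edges $\{v,x_3v\}$, $v\in\Y\cap yH$, together with the analogous graph on $x_1Y\cup x_2Y$ built from $\Sigma_1$. Hence $\G_4[U]$ is a pseudoforest only if every component of $\Sigma_0$ and $\Sigma_1$ is a path with at most two vertices in $yH$. This forces every cyclic run of $K$ and of $\Z_p\setminus K$ to have length at least $2$.

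Your alternating choice already fails at this point. There $\Z_p\setminus K$ has no two cyclically consecutive elements, so $\Sigma_0$ is a single cycle through $(p-1)/2\ge 2$ vertices of $yH$.

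Since $p$ is odd, some run has length at least $3$. Suppose $k-1,k,k+1$ lie in one run of $\Z_p\setminus K$. Then $\{y^{3k+1},x_3y^{3k+1}\}$ is a tree component of $\G_4[U]$. All its remaining neighbours $y^{3k+1+\delta},x_3y^{3k+1+\delta}$ ($\delta\in\{\pm1,\pm2\}$) lie in a component of $\G_4[W]$ coming from a path of $\Sigma_1$ with two $yH$-vertices, and that component is unicyclic. Runs of $K$ behave symmetrically.

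So no transversals $U',W'$ with a perfect matching exist. Making all components unicyclic would need all runs to have length exactly $2$, i.e.\ $p$ even. This is why the paper treats $y^3=a^{-1}$ outside the framework of Lemmas~\ref{sigma's-1} and~\ref{sigma's-2}.

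\emph{What is missing.} The missing idea is a partition that is not a union of $H$-cosets. The paper splits each $S$-coset $Sy^i$ into the pairs $\{1,x_2\}y^i$ and $\{x_1,x_3\}y^i$. It puts $\{1,x_2\}y^i$ into $U$ for $i\equiv 0,1\pmod 4$ and $\{x_1,x_3\}y^i$ for $i\equiv 2,3\pmod 4$ along most of the exponent range. It then patches the last few exponents with an explicit set $V_2$, which differs according to whether $p\equiv 1$ or $3\pmod 4$. With this choice, $W=x_1U$ still holds, and each $Sy^{2i}\cup Sy^{2i+1}$ contributes only a path of length $2$ or two independent edges. Each such component $\G'$ contains vertices $u,v$ with $u\sim x_1v$, which matches it with the component $x_1\G'$ of $\G_4[W]$. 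The correction block is verified by direct computation.
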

\begin{proof}
In this case, the connection set $X$ is equal to 
$\{x,y,y^{-1},y^2,y^{-2}\}$. 
Recall that $S=\{x_0,x_1,x_2,x_3\}$ is the Sylow $2$-subgroup of $G$, where 
$x_0=1$ and $x_1=x$. We distinguish two cases according to the residue of $p$ modulo $4$. 
\medskip

\noindent {\bf Case~1.} $p \equiv 1\!\!\pmod 4$.\ Define the subsets $V_1, V_2 \subseteq G$ as 
\begin{align*}
V_1=& \{1,x_2\}\big\{ y^i \mid i \in [0,3p-6],~
i \equiv 0,1\!\!\!\!\!\pmod 4 \big\}~\cup \\ 
& \{x_1,x_3\}\big\{ y^i \mid i \in [0,3p-6],~
i \equiv 2,3\!\!\!\!\!\pmod 4 \big\}, \\ 
V_2=& \{y^{3p-3},x_2p^{3p-3}\}~\cup~
\{ x_1,x_3\}\big\{y^i \mid i \in \{3p-5,3p-4,3p-2,3p-1\}\big\}. 
\end{align*} 
Then let $U=V_1 \cup V_2$ and $W=G \setminus U$.
In view of Lemma~\ref{new-condition}, it is sufficient to show that 
$\G_4[U]$ and $\G_4[W]$ are pseudoforests and there exist 
a transversal $U'$ of $\G_4[U]$ and a transversal $W'$ of $\G_4[W]$ such that $\G_4[U',W']$ has a perfect matching. 

Let $i \in [0,(3p-7)/2]$. It is straightforward to check 
that every edge of $\G_4[U]$ lies entirely in the set 
$Sy^{2i} \cup Sy^{2i+1}$ or its complement, and the subgraph 
$\G_4[(Sy^{2i} \cup Sy^{2i+1}) \cap U]$ is either a path of length $2$ or the union of two independent edges.  
Note that $\G_4[W]=x_1\G_4[U]$ and $x_1(Sy^{2i} \cup Sy^{2i+1})=Sy^{2i} \cup Sy^{2i+1}$. 
Thus, if $\G'$ is a component of $\G_4[(Sy^{2i} \cup Sy^{2i+1}) \cap U]$, then $x_1\G'$ is a component of $\G_4[(Sy^{2i} \cup Sy^{2i+1}) \cap W]$. Furthermore, one can find 
vertices $u, v \in \vertex(\G')$ (not necessarily distinct) such that $u \sim x_1v$ in $\G_4$.
This means that there exist a transversal $U_i'$ of $\G_4[(Sy^{2i} \cup Sy^{2i+1}) \cap U]$ and a transversal $W_i'$ of $\G_4[(Sy^{2i} \cup Sy^{2i+1}) \cap W]$
such that $\G_4[U_i',W_i']$ has a perfect matching. 

It remains to consider the subgraphs $\G_4[V_2 \cap U]$ and $\G_4[V_2 \cap W]$. 
We compute that each of these graphs has one unicyclic component on $8$ vertices and 
and two isolated vertices. The isolated vertices of $\G_4[V_2 \cap U]$ are 
$y^{3p-3}$ and $x_2y^{3p-3}$ and the isolated vertices of $\G_4[V_2 \cap W]$ are  $x_1y^{3p-3}$ and $x_3y^{3p-3}$. All these yield that $\G_4[U]$ and $\G_4[W]$ are pseudoforests and the subsets $U' \subseteq U$ and $W' \subseteq W$, defined as  
$U'=\bigcup_{i=0}^{(3p-7)/2} U_i'~\cup~
\{y^{3p-3}, x_2y^{3p-3}\}$ and 
$W'=\bigcup_{i=0}^{(3p-7)/2}W_i'~\cup~\{x_1y^{3p-3}, x_3y^{3p-3}\}$ are 
the required transversals. 
\medskip

\noindent{\bf Case~2.} $p \equiv 3\!\!\pmod 4$.\ Define the subsets $V_1, V_2 \subseteq G$ as 
\begin{align*}
V_1=& \{1,x_2\}\big\{ y^i \mid i \in [0,3p-16],~
i \equiv 0,1\!\!\!\!\!\pmod 4 \big\}~\cup \\ 
& \{x_1,x_3\}\big\{ y^i \mid i \in [0,3p-16],~
i \equiv 2,3\!\!\!\!\!\pmod 4 \big\}, \\ 
V_2=& \{1,x_2\}\{y^i \mid i \in \{3p-13,3p-10,3p-9,3p-7,3p-6,3p-3\} \}~\cup \\
& \{ x_1,x_3\}\big\{y^i \mid i \in \{3p-15,3p-14,3p-12,3p-11,3p-8,3p-5,3p-4,\\ 
&~3p-2,3p-1\}\big\}. 
\end{align*} 
Then let $U=V_1 \cup V_2$ and $W=G \setminus U$. Again, our goal is to show that 
$\G_4[U]$ and $\G_4[W]$ are pseudoforests and there exist 
a transversal $U'$ of $\G_4[U]$ and a transversal $W'$ of $\G_4[W]$ such that $\G_4[U',W']$ has a perfect 
matching. It is clear that the argument used in Case~1 can be repeated to deduce that for 
every $i \in [0,(3p-16)/2]$, the subgraphs $\G_4[(Sy^{2i} \cup Sy^{2i+1}) \cap U]$ and 
$\G_4[(Sy^{2i} \cup Sy^{2i+1}) \cap W]$ is a union of components of $\G_4[U]$ and 
$\G_4[W]$, respectively, and also that there exist a transversal $U_i'$ of $\G_4[(Sy^{2i} \cup Sy^{2i+1}) \cap U]$ and a transversal $W_i'$ of $\G_4[(Sy^{2i} \cup Sy^{2i+1}) \cap W]$
such that $\G_4[U_i',W_i']$ has a perfect matching. 

Then, we compute that $\G_4[V_2 \cap U]$ (and $\G_4[V_2 \cap W]$ as well) consists of two 
unicyclic components on $8$ vertices both, two path components 
of length $3$ both, one edge component, and four isolated vertices. 
The two path components of 
$\G_4[V_2 \cap U]$ are 
\[
(x_\alpha,y^{p-15},x_\alpha y^{p-14},x_\alpha y^{p-12},x_\alpha y^{p-11}),~\alpha \in \{x_1,x_3\},
\]
the edge component is $\{y^{p-13},x_2p^{p-13}\}$ and  
the four isolated vertices are $x_1y^{p-8}, x_3y^{p-8}, y^{p-3}$ and  $x_2y^{p-3}$. 
Eventually, we conclude that $\G_4[U]$ and $\G_4[W]$ are pseudoforests and the 
subsets $U' \subset U$ and $W' \subset W$, defined as  
\begin{align*}
U'=&\bigcup_{i=0}^{(3p-16)/2}U_i'~\cup~
\{x_3y^{p-15}, x_1y^{p-14}, y^{p-13}, x_1y^{p-8}, x_3y^{p-8}, y^{p-3},x_2y^{p-3}\}, \\
W'=&\bigcup_{i=0}^{(3p-16)/2}W_i'~\cup~
\{x_2y^{p-15}, y^{p-14}, x_1y^{p-13}, y^{p-8}, x_2y^{p-8}, x_1y^{p-3},x_3y^{p-3}\}
\end{align*}
are the required transversals. 
\end{proof}

Let $Y=\sg{y}$ and $\Sigma=\cay(Y,X \setminus \{x\})$. Clearly, $\Sigma$ is a subgraph of both $\G_3$ and $\G_4$. 
In the next two lemmas, we develop conditions on 
$\Sigma$ that ensure that $\G_3$ and $\G_4$ satisfy the conditions of Lemma~\ref{new-condition}. Although these conditions are technical to state, they are easy to 
verify for $\Sigma$, since it is a Cayley graph on the cyclic group $Y$.  
In what follows, for a subgraph $\Sigma'$ of $\Sigma$ and vertex $v \in \vertex(\Sigma')$, 
the component of $\Sigma'$ containing $v$ is denoted by $\Sigma'(v)$.  

\begin{lem}\label{sigma's-1}
Suppose that $\H \subset H$ is a subset such that for the set  
$\Y=\H \cup y(H \setminus \H) \cup y^2\H$, the graphs $\Sigma_0=\Sigma[\Y]$ and $\Sigma_1=\Sigma[Y \setminus \Y]$  satisfy the following conditions.
\begin{enumerate}[(i)]
% (i)
\item $\Sigma_0$ and $\Sigma_1$ are pseudoforests.
% (ii) 
\item 
For every $h \notin H \setminus \H$, 
$\Sigma_0(yh)$ is a tree and  
$|\vertex(\Sigma_0(yh)) \cap yH| \le 2$. 
% (iii) 
\item 
For every $h \in \H$, $\Sigma_1(yh)$ is a tree and  
$|\vertex(\Sigma_1(yh)) \cap yH| \le 2$. 
\end{enumerate}
Then $\G[U]$ and $\G[G \setminus U]$ are pseudoforests, where 
$\G=\G_3$ or $\G_4$ and $U=X_0\H \cup X_1(H \setminus \H)$. 
\end{lem}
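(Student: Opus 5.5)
By Lemma~\ref{iso}, $\G[G\setminus U]\cong \G[U]$, so the plan is to prove only that $\G[U]$ is a pseudoforest. I will use the normal form $x_\alpha y^i a^j$ and the edge list \eqref{eq:edges}.

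\emph{Step 1: the non-$x$-edges.} Every element of $X\setminus\{x\}$ lies in $Y=\sg{y}$. Hence every $y$-, $ay$- and $a$-edge joins two vertices of the same left coset $x_\alpha Y$. Left multiplication by $x_\alpha$ is an automorphism of $\G$, and it carries $\Sigma$ onto $\G[x_\alpha Y]$ minus its $x$-edges. Reading off $U\cap x_\alpha Y$ from the definitions of $X_0$ and $X_1$ gives
\[
U\cap Y=\Y,\quad U\cap x_3Y=x_3\Y,\quad U\cap x_1Y=x_1(Y\setminus\Y),\quad U\cap x_2Y=x_2(Y\setminus\Y).
\]
So, ignoring $x$-edges, $\G[U]$ is the disjoint union of the four graphs $\Sigma_0$, $x_3\Sigma_0$, $x_1\Sigma_1$ and $x_2\Sigma_1$. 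By (i), each of these is a pseudoforest.

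\emph{Step 2: the $x$-edges inside $U$.} I will check the three layers $SH$, $SyH$ and $Sy^2H$ separately, using \eqref{eq:edges}.
\begin{itemize}
\item In layer $SH$, the $x$-edges are $x_\alpha h\sim x_1x_\alpha h$. They pair $x_0$ with $x_1$ and $x_3$ with $x_2$. In each pair one coset is taken over $\H$ and the other over $H\setminus\H$, so no such edge lies inside $U$.
\item The same happens in layer $Sy^2H$, where the relevant multiplier is $x_2$.
\item In layer $SyH$, the relevant multiplier is $x_3$. It pairs $x_1yh$ with $x_2yh$ for $h\in\H$ and $yh$ with $x_3yh$ for $h\in H\setminus\H$, and both ends of each such edge lie in $U$.
\end{itemize}
Thus the $x$-edges of $\G[U]$ are exactly $\{yh,x_3yh\}$ for $h\in H\setminus\H$ and $\{x_1yh,x_2yh\}$ for $h\in\H$. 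Each such edge joins a vertex $v$ of a component $C$ of $\Sigma_0$ (or of $x_1\Sigma_1$) to the corresponding vertex of the translated copy $x_3C$ (respectively $x_2x_1^{-1}\cdot x_1C$). I read condition (ii) as quantified over $h\in H\setminus\H$, which is what makes $yh\in\Y$.

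\emph{Step 3: gluing.} A component of $\Sigma_0$ that meets $yH$ in no vertex remains a component of $\G[U]$, and so do its translates. Now let $C=\Sigma_0(yh)$ with $h\in H\setminus\H$. All $x$-edges leaving $C$ go to $x_3C$. Conversely, $x_3C$ is joined only to $C$, because its vertices in $x_3yH$ are matched only to $yH$. Hence $C\cup x_3C$, together with these edges, forms a single component of $\G[U]$. By (ii), $C$ is a tree with at most two vertices in $yH$. So the component consists of two disjoint trees joined by one or two edges. With one edge it is a tree, and with two edges it is unicyclic; in either case it is a pseudoforest component. The components $x_1\Sigma_1(yh)$ and $x_2\Sigma_1(yh)$ with $h\in\H$ are handled identically using (iii). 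Therefore $\G[U]$ is a pseudoforest.

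The only real obstacle is the bookkeeping in Step 2, namely determining exactly which $x$-edges survive in $U$. This reduces to the three-line case check from \eqref{eq:edges} done above, together with checking that the edge set is the same in $\G_3$ and $\G_4$. That holds because the two graphs differ only in their non-$x$-edges, which Step 1 treats uniformly.
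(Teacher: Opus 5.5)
Your proof is correct and follows essentially the same route as the paper: reduce to $\G[U]$ via Lemma~\ref{iso}, write $\G[U]$ minus its $x$-edges as $\Sigma_0\cup x_3\Sigma_0\cup x_1\Sigma_1\cup x_2\Sigma_1$, note that the surviving $x$-edges join $yH$ to $x_3yH$ and $x_1yH$ to $x_2yH$, and use (ii)/(iii) to see that each glued component $C\cup x_3C$ is a tree or is unicyclic. Your layer-by-layer check of which $x$-edges lie in $U$ makes explicit what the paper leaves as ``readily deduced'', and you are right that condition (ii) must be read with $h\in H\setminus\H$; as printed, with $h\notin H\setminus\H$, the vertex $yh$ would not lie in $\Y$.
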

\begin{proof}
By Lemma~\ref{iso}, it is sufficient to show that $\G[U]$ is a pseudoforest. 
Let $\H_0=\H$ and $\H_1=H \setminus \H$. For $i \in [0,1]$ and $\alpha \in [1,3]$, let $x_\alpha\Sigma_i$ denote the subgraph of $\G$ whose vertex set is $x_\alpha Y$ 
and edge set is $x_\alpha\edge(\Sigma_i)$. 

Recall that $S$ forms a transversal of $Y$ in $G$.  
Let $E$ be the set of $x$-edges of $\G[U]$. 
One can readily deduce, using \eqref{eq:edges}, that every edge in $E$ joins either a 
vertex in $Y$ with a vertex in $x_3Y$ or a vertex in $x_1Y$ with a vertex in $x_2Y$ and 
the remaining edges of $\G[U]$ lies entirely in the cosets $x_\alpha Y$, where $\alpha \in [0,3]$.
In particular, $\G[U\cap Y]=\Sigma[U \cap Y]$ and $\G[U \cap x_1Y]=x_1\Sigma[x_1U \cap Y]$. 
On the other hand,
\begin{align*}
U \cap Y =& (X_0\H_0 \cap Y) \cup (X_1\H_1 \cap Y)=\H_0 \cup y^2\H_0 \cup y\H_1=\Y \\ 
x_1U \cap Y=& (X_0\H_1 \cap Y) \cup (X_1\H_0 \cap Y)=\H_1 \cup y^2\H_1 \cup y\H_0=Y \setminus \Y. 
\end{align*}
All these together with the identity $x_3U=U$ yield that 
\begin{equation}\label{eq:-E}
\G[U]-E=\bigcup_{\alpha=0}^{3}\G[U \cap x_\alpha Y]=
\Sigma_0~\cup~x_3\Sigma_0~\cup~x_1\Sigma_1~\cup~x_2\Sigma_1.
\end{equation}

Let $\Sigma'$ be any component of $\G[U]$. We have to show that $\Sigma'$ contains at most 
one cycle.  If $\Sigma'$ has no $x$-edge, then $\Sigma'$ is a component of one of 
the four graphs in the right side of \eqref{eq:-E}. Then the condition in (i) guarantees that $\Sigma'$ contains at most one cycle.
Now, assume that $\Sigma'$ contains an $x$-edge, say $e$. 
Then $e$ is either an edge between $\Sigma_0$ and $x_3\Sigma_0$ or an 
edge between $x_1\Sigma_1$ and $x_2\Sigma_1$. If the former case occurs, 
then by the condition in (ii), $\Sigma'[Y]$ is a tree and 
$|\vertex(\Sigma') \cap yH|=1$ or $2$. 
Using also that $\Sigma'[x_3Y]=x_3\Sigma'[Y]$, we find that  
$\Sigma'$ is a tree if $|\vertex(\Sigma') \cap yH|=1$, and  
$\Sigma'-\{e\}$ is a tree if $|\vertex(\Sigma') \cap yH|=2$. 
In either case, $\Sigma'$ contains at most one cycle, as required. 
Finally, if $e$ is and edge between $x_1\Sigma_1$ and $x_2\Sigma_1$, then 
the same argument can be repeated using the condition in (iii).
\end{proof}

Assume the notation and conditions in the previous lemma. 
Let $\varepsilon \in [0,1]$ and $\Sigma'$ be a component of $x_\varepsilon\Sigma_\varepsilon$.
The proof of the lemma reveals that the subgraph $\G'$ of $\G$ ($\G=\G_3$ or $\G_4$), 
defined as 
\begin{equation}\label{eq:comp}
\G'=\begin{cases}
\Sigma'~\text{or}~x_3\Sigma' & \text{if}~
\vertex(\Sigma') \cap x_\varepsilon yH=\emptyset,  \\
\Sigma' \cup x_3\Sigma' \cup \{e\} & \text{if}~
\vertex(\Sigma') \cap x_\varepsilon yH=\{x_\varepsilon yh\}, \\ 
\Sigma' \cup x_3\Sigma' \cup \{e,e'\} & \text{if}~
\vertex(\Sigma') \cap x_\varepsilon yH=\{x_\varepsilon yh,x_\varepsilon yh'\}
\end{cases}
\end{equation}
is a component of $\G[U]$, where $e$ and $e'$ are the edges $\{x_\varepsilon yh,x_3 x_\varepsilon yh\}$ and $\{x_\varepsilon yh',x_3x_\varepsilon yh'\}$, respectively. 
Furthermore, all components of $\G[U]$ are obtained in this way. In what follows, we call a subset $T \subset \vertex(\Sigma_\varepsilon)$ a \emph{reduced transversal} if $T$ contains exactly one vertex from each component $\Sigma'$ of $\Sigma_\varepsilon$ if $\Sigma'$ is a tree and 
$|\vertex(\Sigma') \cap yH| \le 1$, and no vertex from the other components. 

\begin{lem}\label{sigma's-2}
Assuming the notation and the conditions in Lemma~\ref{sigma's-1}, 
suppose that there are reduced transversals $T_0$ of $\Sigma_0$ and 
$T_1$ of $\Sigma_1$ such that for $\varepsilon \in [0,1]$, 
\begin{enumerate}[(i)]
%(i)
\item if $u \in T_\varepsilon \cap y^2H$, then  
$\vertex(\Sigma_\varepsilon(u)) \cap yH=\emptyset$, and 
% (ii)
\item there exists disjoint subsets $T_{\varepsilon,y}, T_{\varepsilon,ay} \subseteq T_\varepsilon \cap H$ and 
disjoint subsets $T_{\varepsilon,y^{-1}}, T_{\varepsilon,(ay)^{-1}} \subseteq T_\varepsilon \cap y^2H$ for which 
\[
yT_{\varepsilon,y} \cup ayT_{\varepsilon,ay} \cup 
y^{-1}T_{\varepsilon,y^{-1}} \cup (ay)^{-1}T_{\varepsilon,(ay)^{-1}} = 
T_{1-\varepsilon} \cap yH.
\]
\end{enumerate}
Then there exist a transversal $U'$ of $\G[U]$ and a transversal 
$W'$ of $\G[G \setminus U]$ such that 
$\G[U',W']$ has a perfect matching. 
\end{lem}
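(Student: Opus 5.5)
Following the pattern used for $\G_2$ in Lemma~\ref{G2-2}, I will build an explicit transversal $U'$ of $\G[U]$ and an injective map $\nu\colon U'\to G$ with $u\sim\nu(u)$ for all $u$, and then show $\nu(U')=x_1U'$. Since $\G[G\setminus U]=x_1\G[U]$ by Lemma~\ref{iso}, $x_1U'$ is a transversal of $\G[G\setminus U]$, and the pairs $\{u,\nu(u)\}$ form the required perfect matching of $\G[U',W']$ with $W'=x_1U'$.

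\textbf{Constructing $U'$.} I use the description \eqref{eq:comp} of the components of $\G[U]$. A component $\G'$ coming from a component $\Sigma'$ of $x_\varepsilon\Sigma_\varepsilon$ is a tree exactly when $\Sigma'$ is a tree meeting $x_\varepsilon yH$ in at most one vertex. In the case $\vertex(\Sigma')\cap x_\varepsilon yH=\emptyset$, the graphs $\Sigma'$ and $x_3\Sigma'$ are two separate components; otherwise they are joined by one $x$-edge. So I define
\[
U'=\bigcup_{\varepsilon\in[0,1]}x_\varepsilon\bigl(T_\varepsilon\cup x_3 T_\varepsilon^{\circ}\bigr),
\]
where $T_\varepsilon^{\circ}$ consists of those $t\in T_\varepsilon$ with $\vertex(\Sigma_\varepsilon(t))\cap yH=\emptyset$. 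Because the reduced transversal $T_\varepsilon$ picks one vertex from exactly the tree components with at most one vertex in $yH$, this $U'$ meets every tree component of $\G[U]$ exactly once and avoids the unicyclic ones.

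\textbf{Defining $\nu$.} I choose $\nu$ according to where $u$ lies.
\begin{enumerate}[(1)]
\item For $u=x_\varepsilon t$ with $t\in T_\varepsilon\cap y^2H$ or $t\in T_\varepsilon\cap H$ not in the special subsets, and for the $x_3$-copies: use an $x$-edge. By \eqref{eq:edges} the $x$-edge at $x_\alpha y^2 a^j$ goes to $x_2x_\alpha y^2a^j$. So $x_\varepsilon t\mapsto x_2x_\varepsilon t$ lands in $x_{1-\varepsilon}$-type cosets, that is, in $x_1 U'$ up to relabeling via $x_2=x_1x_3$.
\item For $t\in T_{\varepsilon,z}$ with $z\in\{y,ay,y^{-1},(ay)^{-1}\}$: set $\nu(x_\varepsilon t)=x_\varepsilon tz'$, the neighbour corresponding to the $z$-edge. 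Hypothesis (ii) says these images are exactly $x_\varepsilon(T_{1-\varepsilon}\cap yH)$.
\end{enumerate}
This is the analogue of the sets $U'_x,U'_y,U'_{y^{-1}}$ in Lemma~\ref{G2-2}. Hypothesis (i) is what makes the $y^2H$-vertices of $T_\varepsilon$ come in pairs $t,x_3t$, so that both copies can be matched by $x$-edges, or one of them absorbed into case (2).

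\textbf{Checking $\nu(U')=x_1U'$.} I will compute $\nu(U')$ coset by coset ($H$, $yH$, $y^2H$ inside each $x_\alpha Y$), exactly as in the computation of $\nu(U')$ in Lemma~\ref{G2-2}, and compare with
\[
x_1U'=x_1T_0\cup x_2T_0^\circ\cup T_1\cup x_3T_1^\circ .
\]
The vertices of $T_{1-\varepsilon}$ in $yH$ whose components meet $yH$ are hit by case (2). The remaining ones, where only a single copy lies in $U'$, must be hit through the $x_3$-edges on $yH$ (namely $x_\alpha ya^j\sim x_3x_\alpha ya^j$).

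\textbf{Main obstacle.} The hard part is the bookkeeping needed to show that this assignment is a bijection. In particular, I must show that the $yH$-vertices are covered exactly once, neither doubly by an $x$-edge and a $y$-type edge nor missed. I expect hypothesis (ii) (the disjoint union in the displayed identity) together with (i) to settle exactly this. If the naive split fails, I would first adjust which copy ($t$ or $x_3t$) is chosen in the transversal for components with $yH$-intersection of size one.
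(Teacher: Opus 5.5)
\textbf{There is a genuine gap.} Your overall framework is the paper's: an explicit transversal $U'$ together with an edge-map $\nu$, checked coset by coset. Your $U'=T_0\cup x_3T_0^{\circ}\cup x_1T_1\cup x_2T_1^{\circ}$ is also a valid transversal of $\G[U]$. But the target $\nu(U')=x_1U'$ is the wrong goal and cannot be reached from the hypotheses, and your proposed repair does not rescue it.

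\emph{The obstruction.} Take $b\in T_{1,y^{-1}}$ and $t=by^{-1}\in T_0\cap yH$. This set is not vacuous; it is needed in Lemma~\ref{G4-s}. By (i), $b\in T_1^{\circ}$, so $t,x_1b,x_2b\in U'$ and $x_1t,b,x_3b\in x_1U'$. Using \eqref{eq:y-on-S}, the only edges between these two triples are $t\sim b$, $x_2b\sim b$, $x_1b\sim x_3b$ and $x_1b\sim x_1t$. So $t$ and $x_2b$ compete for $b$. Your adjustment of choosing $x_3t$ instead of $t$ only moves the collision: $x_3t$ and $x_1b$ then both see only $x_3b$.

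Symmetrically, take $s\in T_{0,y^{-1}}$ and $t=sy^{-1}$. Then $x_3s$ and $x_1t$ both see only $x_1s$ among the targets $x_1s,x_2s,t$, and switching to $x_2t$ just creates a collision at $x_2s$. Nothing in the hypotheses supplies alternative partners.

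\emph{Two further problems.} Your fallback of hitting leftover targets ``through the $x_3$-edges on $yH$'' is impossible. An edge $x_\alpha yh\sim x_3x_\alpha yh$ has both ends in $U$ or both in $G\setminus U$; these are exactly the edges $e,e'$ of \eqref{eq:comp}, so they never lie in $\G[U',W']$. You also leave $\nu$ undefined on $T_0\cap yH$ and on $x_1(T_1\cap yH)$. These vertices must be sent back along hypothesis (ii) read in reverse, e.g.\ $y^{-1}u\mapsto u$ for $u\in T_{1,y^{-1}}$ and $yu\mapsto u$ for $u\in T_{1,y}$.

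\emph{The missing idea.} $W'$ must differ from $x_1U'$ in exactly those components meeting $yH$ (or $x_\alpha yH$) that contain images of $T_{\varepsilon,y^{-1}}$ and $T_{\varepsilon,(ay)^{-1}}$: there the other copy must be chosen.
\begin{enumerate}[(1)]
\item In the first configuration, map $t\mapsto b$, $x_1b\mapsto x_3b$ (an $x$-edge) and $x_2b\mapsto x_2t$ (a $y^{-1}$-edge). Thus $x_2t$ replaces $x_1t$ in $W'$. This is legitimate because $x_1t$ and $x_2t$ lie in the same component $x_1\bigl(\Sigma_0(t)\cup x_3\Sigma_0(t)\cup e\bigr)$ of $\G[G\setminus U]$.
\item In the second configuration, with your $U'$, map $s\mapsto x_2s$, $x_3s\mapsto x_3t$ and $x_1t\mapsto x_1s$. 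Thus $x_3t$ replaces $t$.
\end{enumerate}
This is precisely the paper's device. It puts $x_2y^{-1}T_{0,y^{-1}}$ rather than $x_1y^{-1}T_{0,y^{-1}}$ into $U'$. It then shows that $\nu(U')$ equals $x_1$ applied to the transversal obtained from $U'$ by exchanging, in each such component, the chosen copy for the other one; it does not show $\nu(U')=x_1U'$. Once this swap is built in, your coset-by-coset bookkeeping goes through.
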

\begin{proof} 
In seeking for a suitable transversal of $\G[U]$ and a suitable transversal of $\G[G \setminus U]$, 
we introduce a couple of 
subsets of $T_0$ and $T_1$. First, let 
\[
A_1 =T_0 \cap yH~\text{and}~B_1 =T_1 \cap yH.
\]
It follows from \eqref{eq:comp} and the definition of a reduced transversal that the set 
$A_1$ covers the tree components of $\G[U]$ containing a vertex from $yH$ and  
$x_1B_1$ covers the tree components containing a vertex from $x_1yH$.
For our convenience, we introduce the following notations:
\begin{enumerate}[]
\item $A_0=T_{0,y}$, $A_0'=T_{0,ay}$, $A_2=T_{0,y^{-1}}$, $A_2'=T_{0,(ay)^{-1}}$, 
\item $B_0=T_{1,y}$, $B_0'=T_{1,ay}$, $B_2=T_{1,y^{-1}}$, $B_2'=T_{1,(ay)^{-1}}$.
\end{enumerate}
Note that for $i \in [0,1]$, 
$A_i$ and $A_i'$ are disjoint subsets of $T_0 \cap y^iH$ and 
$B_i$ and $B_i'$ are disjoint subsets of $T_1 \cap y^iH$. In this context, the condition in 
case (ii) of the lemma reads as 
\begin{align}
A_1=&yB_0~\cup~ayB_0'~\cup~y^{-1}B_2~\cup~(ay)^{-1}B_2', \label{eq:A} \\
B_1=&yA_0~\cup~ayA_0'~\cup~y^{-1}A_2~\cup (ay)^{-1}A_2'. \label{eq:B}
\end{align} 
Finally, let 
\begin{enumerate}[]
\item 
$A_0''=(T_0 \cap H) \setminus (A_0 \cup A_0')$, 
$A_2''=(T_0 \cap y^2H) \setminus (A_2 \cup A_2')$, 
\item 
$A_0'''= \{u \in T_0 \cap H \mid \vertex(\Sigma_0(u)) \cap yH=\emptyset \}$, 
\item 
$B_0''=(T_1 \cap H) \setminus (B_0 \cup B_0')$, 
$B_2''=(T_1 \cap H) \setminus (B_2 \cup B_2')$, 
\item
$B_0'''= \{u \in T_1\ \cap H \mid \vertex(\Sigma_1(u)) \cap yH=\emptyset \}$.
\end{enumerate} 
We claim that the set 
\begin{equation}\label{eq:U'}
\begin{aligned}
U':=&~ T_0~\cup~  
x_1\big[\, (T_1 \setminus yH)~\cup~yA_0~\cup~ayA_0'\, \big]~\cup \\
&~x_2\big[\, B_0'''~\cup~y^{-1}A_2~\cup~(ay)^{-1}A_2'~\cup~(T_1 \cap y^2H)\, \big]~\cup~
x_3\big[\, A_0'''~\cup~(T_0 \cap y^2H)\, \big]. 
\end{aligned}
\end{equation}
is a transversal of $\G[U]$. This follows from the description of the components of $\G[U]$ 
in \eqref{eq:comp} and the condition in (i). More precisely, the subset 
$T_0 \subset U'$ covers the components of $\G[U]$ that have a vertex in $Y$; then the subset  
\[
x_1\big[ (T_1 \setminus yH)~\cup~yA_0~\cup~ayA_0'\big]~\cup~
x_2\big[y^{-1}A_2~\cup~(ay)^{-1}A_2'\big]
\]
covers the components having a vertex in $x_1Y$; the subset 
$x_2\big[B_0''' \cup (T_1 \cap y^2H)\big]$ covers the components that fall outside $Y \cup x_1Y$ 
and have a vertex in $x_2Y$; and finally, 
$x_3\big[ A_0''' \cup (T_0 \cap y^2H)\big]$ covers the components that fall outside $Y \cup x_1Y$ and 
have a vertex in $x_3Y$.
\medskip

It remains to find a suitable transversal of $\G[G \setminus U]$.
For this purpose, rewrite $U'$ as 
\begin{align*}
U'=&~A_0 \cup A_0'~\cup~A_0'' \cup~yB_0~\cup~ayB_0'~\cup  
y^{-1}B_2~\cup~(ay)^{-1}B_2'~ \cup~A_2~\cup A_2'~\cup~A_2''~\cup \\ 
&~x_1\big[\, B_0~\cup~B_0'~\cup~B_0''~\cup~yA_0~\cup~ayA_0'~\cup~
(T_1 \cap y^2H)\, \big]~\cup \\
&~x_2\big[\, B_0'''~\cup~y^{-1}A_2~\cup~(ay)^{-1}A_2'~\cup~
B_2~\cup~B_2'~\cup~B_2''\, \big]~\cup \\
&~x_3\big[\, A'''_0~\cup~(T_0 \cap y^2H)\, \big].
\end{align*}
Then, define $\nu : U' \to G$ as for $u \in U'$, let 
\[
\nu(u)=\begin{cases}
ux & \text{if}~u \in \begin{cases}
A_0'' \cup A_2''~\cup~x_1\big[\, B_0'' \cup (T_1 \cap y^2H)\, \big]~\cup \\
x_2\big[\, B_0''' \cup B_2''\, \big]~\cup~x_3 \big[\, A'''_0 \cup 
(T_0 \cap y^2H)\, \big], \end{cases} \\ 
uy & \text{if}~u \in A_0 \cup y^{-1}B_2 \cup x_1B_0 \cup x_2y^{-1}A_2, \\
uay & \text{if}~u \in A_0' \cup (ay)^{-1}B_2' \cup x_1B_0' \cup x_2(ay)^{-1}A_2', \\
uy^{-1} & \text{if}~u \in yB_0 \cup A_2 \cup x_1yA_0 \cup x_2B_2, \\
u(ay)^{-1} & \text{if}~u \in ayB_0' \cup A_2' \cup x_1ayA_0' \cup x_2B_2'.
\end{cases}
\]
To finish the proof it is sufficient to show that $\nu(U')$ is a transversal of
$\G[G \setminus U]$. We compute that 
\begin{align*}
\nu(U')=&~
yA_0 \cup ayA_0'~\cup~x_1A_0'' \cup~B_0~\cup~B_0'~\cup  
B_2~\cup~B_2'~ \cup~y^{-1}A_2~\cup~(ay)^{-1}A_2'~\cup~x_2A_2''~\cup  \\
&~x_1\big[\, yB_0~\cup~ayB_0'~\cup~x_1B_0''~\cup~A_0~\cup~A_0'~\cup~
x_2(T_1 \cap y^2H)\, \big]~\cup  \\
&~x_2\big[\, x_1B_0'''~\cup~A_2~\cup~A_2'~\cup~
y^{-1}B_2~\cup~(ay)^{-1}B_2'~\cup~x_2B_2''\, \big]~\cup  \\
&~x_3\big[\, x_1A'''_0~\cup~x_2(T_0 \cap y^2H)\, \big]  \\
=&~T_1~\cup~x_1\big[T_0 \setminus (y^{-1}B_2~\cup~(ay)^{-1}B_2') \big]~\cup \\ 
&~x_2\big[(T_0 \cap y^2H)~\cup~y^{-1}B_2~\cup~(ay)^{-1}B_2'~\cup~A_0''' \big]~\cup \\ &~x_3\big[B_0'''~\cup~(T_1 \cap y^2H)\big].
\end{align*}
Combining this with \eqref{eq:U'} we find that 
\begin{equation}\label{eq:nuU'}
\begin{aligned}
\nu(U')=&~
\big[ x_1U'~\cup~y^{-1}A_2~\cup~(ay)^{-1}A_2'~\cup~x_2y^{-1}B_2~\cup~x_2(ay)^{-1}B_2' \big]~
\setminus~\big[ x_1y^{-1}B_2~\cup \\
&~x_1(ay)^{-1}B_2'~\cup~x_3y^{-1}A_2~\cup~x_3(ay)^{-1}A_2'\big]  \\
=&~
x_1\big[U'~\cup~x_1y^{-1}A_2~\cup~x_1(ay)^{-1}A_2'~\cup~x_3y^{-1}B_2~\cup~x_3(ay)^{-1}B_2' \big]~
\setminus~ \\
&~x_1\big[y^{-1}B_2~\cup~(ay)^{-1}B_2'~\cup~x_2y^{-1}A_2~\cup~x_2(ay)^{-1}A_2'\big].  
\end{aligned}
\end{equation}

On the other hand, it follows from \eqref{eq:comp} that the subset 
$x_2y^{-1}A_2~\cup~x_2(ay)^{-1}A_2' \subset U'$  
covers the same components of $\G[U]$ as the set $x_1y^{-1}A_2~\cup~x_1(ay)^{-1}A_2'$, and  
the subset $y^{-1}B_2~\cup~(ay)^{-1}B_2' \subset U'$  
covers the same components as the set $x_3y^{-1}B_2~\cup~x_3(ay)^{-1}A_2'$.
Thus, letting $C=x_2y^{-1}A_2 \cup x_2(ay)^{-1}A_2'\cup y^{-1}B_2~\cup~(ay)^{-1}B_2'$ and 
$C'=x_1y^{-1}A_2 \cup x_1(ay)^{-1}A_2' \cup x_3y^{-1}B_2 \cup x_3(ay)^{-1}A_2'$, 
we have that $(U' \cup C) \setminus C'$ is a is a transversal of $\G[U]$. 
As $\nu(U')=x_1((U' \cup C) \setminus C')$, see \eqref{eq:nuU'}, this proves that  
$\nu(U')$ is a transversal of $\G[G\setminus U]$, as required.
\end{proof}

In view of Lemmas~\ref{new-condition},~\ref{sigma's-1} and~\ref{sigma's-2}, 
the existence of a nowhere-zero $3$-flow in $\G_3$ follows from the following lemma.
\medskip

\begin{lem}\label{G3} 
Let $\H=\{a^i \mid i \in [0,p-1], 
i  \equiv 0\!\!\pmod 2\}$ and $\Y=\H \cup y(H \setminus \H) \cup y^2\H$.
Then the subgraphs $\Sigma_0=\Sigma [\Y]$ and $\Sigma_1=\Sigma[Y \setminus \Y]$ of $\G_3$ satisfy the conditions in Lemmas~\ref{sigma's-1} and \ref{sigma's-2}.
\end{lem}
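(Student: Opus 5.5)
The plan is to make the two graphs $\Sigma_0$ and $\Sigma_1$ completely explicit and then read off every condition of Lemmas~\ref{sigma's-1} and~\ref{sigma's-2}. Since $G\cong A_4\times\Z_p$, the element $a$ is central, $H=\sg{a}$, and $Y=H\cup yH\cup y^2H$. The graph $\Sigma=\cay(Y,\{a,a^{-1},y,y^{-1}\})$ has the $a$- and $y$-edges listed in \eqref{eq:edges}; the only $y$-edges leaving $y^2H$ are $y^2a^j\sim a^{j+s}$, where $y^3=a^s$.

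Write $E$ for the even exponents in $[0,p-1]$ and $O$ for the odd ones. Then
\[
\Y=\{a^j,\,y^2a^j \mid j\in E\}\cup\{ya^j\mid j\in O\}.
\]
Because $p-1$ is even, the only consecutive pair of exponents of equal parity is $\{p-1,0\}$. I will verify the following description of the edges.
\begin{itemize}
\item The $a$-edges of $\Sigma_0$ are exactly $\{a^{p-1},1\}$ and $\{y^2a^{p-1},y^2\}$.
\item The $y$-edges $a^j\sim ya^j$ and $ya^j\sim y^2a^j$ always join vertices of opposite parity class, so they lie in neither $\Sigma_0$ nor $\Sigma_1$.
\item The remaining $y$-edges of $\Sigma_0$ are $y^2a^j\sim a^{j+s}$ with $j$ and $j+s \bmod p$ both even.
\item Symmetrically, $\Sigma_1$ has the single $a$-edge $\{y,ya^{p-1}\}$ inside $yH$, no $a$-edges inside $H$ or $y^2H$, and $y$-edges $y^2a^j\sim a^{j+s}$ with $j$ and $j+s$ both odd.
\end{itemize}

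Every vertex therefore has degree at most $2$ in $\Sigma_0$ and at most $1$ in $\Sigma_1$ outside the edge $\{y,ya^{p-1}\}$. Consequently $\Sigma_0$ is a union of paths and cycles, and $\Sigma_1$ is a matching together with isolated vertices. This gives condition (i) of Lemma~\ref{sigma's-1}.

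For conditions (ii) and (iii), note that the vertices of $\Sigma_0$ in $yH$ are isolated. In $\Sigma_1$, the vertices in $yH$ are isolated except for the pair $y,ya^{p-1}$, whose component is a single edge with exactly two vertices in $yH$. All relevant components are therefore trees meeting $yH$ in at most two vertices. Condition (i) of Lemma~\ref{sigma's-2} is automatic, because no component containing a vertex of $y^2H$ reaches $yH$ in either graph.

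By the definition of a reduced transversal, $T_0\cap yH=\{ya^j\mid j\in O\}$ and $T_1\cap yH=\{ya^j \mid j\in E,\ 2\le j\le p-3\}$. It remains to choose $T_0$ and $T_1$ so that condition (ii) of Lemma~\ref{sigma's-2} holds.

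Since $a$ is central, the four maps act as follows: $y$ sends $a^j\mapsto ya^j$, $ay$ sends $a^j\mapsto ya^{j+1}$, $y^{-1}$ sends $y^2a^j\mapsto ya^j$, and $(ay)^{-1}$ sends $y^2a^j\mapsto ya^{j-1}$. The tree components of $\Sigma_0$ contained in $H\cup y^2H$ only use even exponents. Hence an even target $ya^j$, for $j\in[2,p-3]$, can be reached either from $a^j$ via $y$ or from $y^2a^j$ via $y^{-1}$. Likewise each odd target $ya^j$ is reached from $a^j$ or $y^2a^j$ in $\Sigma_1$.

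So the task becomes a system of distinct representatives. For $\varepsilon\in[0,1]$, we must pick one vertex from distinct tree components of $\Sigma_\varepsilon$ lying in $H\cup y^2H$, hitting each required exponent exactly once. We then put these vertices into $T_{\varepsilon,y}$ or $T_{\varepsilon,y^{-1}}$, and complete $T_\varepsilon$ arbitrarily on the remaining tree components. If convenient, $ay$ and $(ay)^{-1}$ give extra flexibility through the shifts $j\mapsto j\pm1$.

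For $\Sigma_1$ this is easy: its components in $H\cup y^2H$ are single edges $\{y^2a^j,a^{j+s}\}$ or isolated vertices. The target $ya^j$ can take $y^2a^j$ as its representative, and the edges for different $j$ are distinct, so no conflicts arise.

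The main obstacle is $\Sigma_0$, whose components are paths and possibly cycles formed by the chains $a^{j+s}-y^2a^j-\cdots$ together with the two $a$-edges at exponents $p-1,0$. Two things must be ruled out. First, two targets might be forced into the same component. Second, a target might lie only on a cycle, which a reduced transversal cannot use.

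I would split into cases according to the parity of the representative $s\in[1,p-1]$. If $s$ is odd, then $j+s \bmod p$ is even only after wrap-around, so few $y$-edges survive and components are short paths; each target $ya^j$ can use $y^2a^j$ directly. If $s$ is even, the chain $j\mapsto j+s$ on even residues is longer. Here I would verify, via Hall's condition or by an explicit choice alternating between $a^j$ and $y^2a^j$ along each path, that distinct targets receive distinct components. The key point is that the two $a$-edges at exponents $\{p-1,0\}$ break any potential cycle, and the targets exclude exactly $j=0$ and $j=p-1$. Once this selection is made, Lemmas~\ref{sigma's-1} and~\ref{sigma's-2} apply directly.
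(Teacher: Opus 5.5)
Your overall route is the paper's. You make $\Sigma_0,\Sigma_1$ explicit, read off Lemma~\ref{sigma's-1} and condition (i) of Lemma~\ref{sigma's-2}, and meet condition (ii) by representing each target $ya^j$ by $a^j$ (via $y$) or $y^2a^j$ (via $y^{-1}$). Your edge lists, the sets $T_0\cap yH$ and $T_1\cap yH$, and your treatment of $\Sigma_1$ are correct.

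\textbf{The gap.} It sits in the case you call the main obstacle. For $s$ even you only announce a Hall-type or alternating choice without carrying it out, and the picture motivating it is false: there are no chains $a^{j+s}-y^2a^j-\cdots$ in $\Sigma_0$.
\begin{itemize}
\item In $\Sigma$ the vertex $a^k$ has exactly two $y$-neighbours. One is $ya^k$, which is never in the same $\Sigma_\varepsilon$ as $a^k$; the other is $y^{-1}a^k=y^2a^{k-s}$.
\item Likewise $y^2a^j$ has only $ya^j$ (excluded) and $a^{j+s}$.
\item So for every $s$ the $y$-edges of $\Sigma_\varepsilon$ form a matching between $H$ and $y^2H$. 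Besides these, $\Sigma_0$ has only the two edges $\{1,a^{p-1}\}$ and $\{y^2,y^2a^{p-1}\}$.
\item Moreover, $1$ carries a $y$-edge of $\Sigma_0$ iff $s$ is odd, and $a^{p-1}$ iff $s$ is even. Similarly, $y^2$ does iff $s$ is even, and $y^2a^{p-1}$ iff $s$ is odd. This is exactly the remark in the paper's proof.
\end{itemize}
Hence every component of $\Sigma_0$ is a path on at most four vertices. Cycles never occur; the $a$-edges do not ``break'' cycles, since adding edges cannot, and there are none to break. For even $j\in[2,p-3]$, the component of $y^2a^j$ lies in $\{y^2a^j,a^{j+s},1,a^{p-1}\}$, and the component of $a^j$ lies in $\{a^j,y^2a^{j-s},y^2,y^2a^{p-1}\}$. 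So neither contains a second target vertex.

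\textbf{How to close it.} Take
$T_{0,y^{-1}}=\{y^2a^j\mid j \text{ even},\ 2\le j\le p-3\}$ and $T_{0,y}=\emptyset$, and complete with one vertex from each remaining tree component. Alternatively, take $T_{0,y}=\{a^j\mid j\text{ even},\ 2\le j\le p-3\}$, which is essentially the paper's choice. Either works uniformly in $s$, with no parity split and no Hall argument needed.

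\textbf{A further correction.} Drop the suggestion of using $ay$ and $(ay)^{-1}$ for extra flexibility. For $\G_3$ we have $ay\notin X$, so the maps $u\mapsto uay$ and $u\mapsto u(ay)^{-1}$ in the proof of Lemma~\ref{sigma's-2} would not follow edges of $\G_3$. For $\G_3$ one must take $T_{\varepsilon,ay}=T_{\varepsilon,(ay)^{-1}}=\emptyset$.
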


\begin{proof}
Then $y^3=a^s$ for some $s \in [1,p-1]$.  Let $\varepsilon \in \{0,1\}$. 
Using \eqref{eq:edges}, one obtains that the $y$-edges of $\Sigma_\varepsilon$
is of the form $\{y^2a^{i},a^{i+s}\}$, where $i \equiv \varepsilon\!\!\pmod 2$ and 
$i < p-s$ if $s$ is even and $i > p-s$ if $s$ is odd. It follows from this 
that exactly one of $y^i$ and $y^ia^{p-1}$ is an incident with
a $y$-edge. Also, $\Sigma_0$ has two $a$-edges, namely
$\{1,a^{p-1}\}$ and $\{y^2,y^2a^{p-1}\}$; and $\{y,ya^{p-1}\}$ is 
the only $a$-edge of $\Sigma_1$. All these yield that $\Sigma_\varepsilon$ satisfies all  
conditions in Lemma~\ref{sigma's-1} and also the condition in (i) of Lemma~\ref{sigma's-2}.    
Furthermore, it follows that there is a reduced transversal $T_\varepsilon$ of $\Sigma_\varepsilon$ such that $H \subset T_\varepsilon$, hence the condition in (ii) of Lemma~\ref{sigma's-2} is also met.  
\end{proof}

We now turn to the graph $\G_4$. 
In view of Lemma~\ref{G4--1}, we may assume that $y^3=a^s$ for some 
$s \in [1,p-2]$. We distinguish two cases according to whether $s=1$ or not.

\begin{lem}\label{G4-1} 
Assume that $y^3=a$ and let  
\[
\H=\begin{cases} 
\big\{a^{i} \mid  i\in [0,p-1],~i \equiv 0,1\!\!\!\!\!\pmod 4\big\} & \text{if}~
p \equiv 1\!\!\!\!\!\pmod 4, \\ 
\big\{a^{p-1}, a^{i} \mid  i\in [0,p-3],~i \equiv 0,1\!\!\!\!\!\pmod 4\big\} & \text{if}~
p \equiv 3\!\!\!\!\!\pmod 4
\end{cases}
\]
and $\Y=\H \cup y(Y \setminus \H)  \cup y^2\H$. 
Then the subgraphs $\Sigma_0=\Sigma [\Y]$ and $\Sigma_1=\Sigma[Y \setminus \Y]$ of $\G_4$ satisfy the 
conditions in Lemmas~\ref{sigma's-1} and \ref{sigma's-2}.
\end{lem}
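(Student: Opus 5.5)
The plan is to make $\Sigma_0$ and $\Sigma_1$ completely explicit, since with $y^3=a$ everything reduces to arithmetic modulo $p$ on exponents of $a$. Here $ay=y^4$, so $\Sigma=\cay(Y,\{y^{\pm1},y^{\pm4}\})$, and I read the definition of $\Y$ as $\H\cup y(H\setminus\H)\cup y^2\H$, as in Lemma~\ref{sigma's-1}. Write $I\subseteq\Z_p$ for the exponent set of $\H$. Then $\Y$ consists of
\[
a^j~(j\in I),\qquad ya^j~(j\notin I),\qquad y^2a^j~(j\in I).
\]
Specialising \eqref{eq:edges} to $s=1$, the edges of $\Sigma$ are
\[
a^j\sim ya^j,\ ya^j\sim y^2a^j,\ y^2a^j\sim a^{j+1}\ \ (y\text{-edges}),\qquad a^j\sim ya^{j+1},\ ya^j\sim y^2a^{j+1},\ y^2a^j\sim a^{j+2}\ \ (ay\text{-edges}).
\]

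First I would list the edges of $\Sigma_0$ by membership conditions on $I$:
\begin{itemize}
\item $a^j\sim ya^j$ and $ya^j\sim y^2a^j$ never lie in $\Y$;
\item $y^2a^j\sim a^{j+1}$ needs $j,j+1\in I$;
\item $a^j\sim ya^{j+1}$ needs $j\in I$, $j+1\notin I$;
\item $ya^j\sim y^2a^{j+1}$ needs $j\notin I$, $j+1\in I$;
\item $y^2a^j\sim a^{j+2}$ needs $j,j+2\in I$.
\end{itemize}
For $\Sigma_1$ the conditions are the same with $I$ replaced by its complement. Away from the wrap-around at $j\equiv p-1,0$, take the periodic pattern $I=\{0,1\pmod 4\}$.
\begin{itemize}
\item In $\Sigma_0$ the only edges are $ya^{4t-1}\sim y^2a^{4t}\sim a^{4t+1}\sim ya^{4t+2}$. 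So each component is a path with exactly two vertices in $yH$, or an isolated vertex such as $a^{4t}$ or $y^2a^{4t+1}$.
\item For $\Sigma_1$ the analogous computation, shifted by $2$, gives paths $ya^{4t+1}\sim y^2a^{4t+2}\sim a^{4t+3}\sim ya^{4t+4}$ and isolated vertices.
\end{itemize}
Hence, in the periodic range, $\Sigma_0$ and $\Sigma_1$ are forests of paths of length at most $3$. Every component meets $yH$ in $0$ or $2$ vertices, and every component meeting $yH$ is a tree. This yields conditions (i)--(iii) of Lemma~\ref{sigma's-1} and condition (i) of Lemma~\ref{sigma's-2} locally. For (i) of Lemma~\ref{sigma's-2}, note that the isolated vertices in $y^2H$ do not meet $yH$.

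The second step handles the finitely many exponents near $p-1$ and $0$, where the pattern breaks because $p$ is odd. This is the reason for the two definitions of $\H$:
\begin{itemize}
\item For $p\equiv1\pmod4$, $I$ ends with $p-1\equiv0\pmod 4$, followed by $0,1$.
\item For $p\equiv3\pmod4$, $I$ ends with $p-3,p-4$ removed in favour of $p-1$.
\end{itemize}
I would draw the few vertices $a^j,ya^j,y^2a^j$ with $j\in\{p-4,\dots,p-1,0,1,2\}$ and check by hand that the extra edges only merge paths into longer paths or create at most one cycle per component. Components containing a cycle are then allowed in Lemma~\ref{sigma's-1}(i). I must also verify that no component meeting $yH$ acquires a cycle or a third $yH$ vertex.

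The third step is the reduced transversals and condition (ii) of Lemma~\ref{sigma's-2}. By the description above, the components that a reduced transversal must hit are exactly the components disjoint from $yH$; all others meet $yH$ in two vertices or are excluded. So I expect $T_\varepsilon$ to consist of the isolated vertices of $\Sigma_\varepsilon$ in $H\cup y^2H$, plus possibly a boundary component with at most one vertex in $yH$. In that case $T_\varepsilon\cap yH$ is empty or tiny, and condition (ii) reduces to matching these few vertices of $T_{1-\varepsilon}\cap yH$ with distinct vertices of $T_\varepsilon\cap H$ (via $y$ or $ay$) or of $T_\varepsilon\cap y^2H$ (via $y^{-1}$ or $(ay)^{-1}$); empty sets are allowed for the rest.

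I expect the main obstacle to be this boundary bookkeeping. Getting the local picture near $j=p-1,0$ exactly right for both residues of $p$ modulo $4$ is delicate, and so is finding the explicit $y$/$ay$ neighbours realising \eqref{eq:A}--\eqref{eq:B} for any $yH$-vertices in the reduced transversals there. I would first do the cases $p=5$ and $p=7$ by hand to fix the pattern, then argue that for larger $p$ only the periodic part grows.
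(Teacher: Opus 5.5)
Your framework is the paper's: compute $\Sigma_0$ and $\Sigma_1$ explicitly, take reduced transversals made essentially of isolated vertices, and check (ii) of Lemma~\ref{sigma's-2}. Your periodic computation is correct. However, the proposal stops exactly where the content of the lemma lies. The exponents near $p-1,0,1$ are only promised a check, and no $T_\varepsilon$ or $T_{\varepsilon,z}$ is ever exhibited.

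Your description of $\H$ for $p\equiv 3\pmod 4$ is also wrong. Since $p-3\equiv 0\pmod 4$, the exponent $p-3$ lies in $I$, and $p-4\equiv 3$ was never in the pattern. The only deviation from the periodic pattern is that $p-2$ is replaced by $p-1$. A hand check based on your description would examine the wrong graph.

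The missing point is that condition (ii) of Lemma~\ref{sigma's-2} is not vacuous. In both cases $0,1,p-1\in I$, so $y\in Y\setminus\Y$. All four $\Sigma$-neighbours $1$, $a^{p-1}$, $y^2$, $y^2a$ of $y$ lie in $\Y$. Hence $y$ is an isolated vertex of $\Sigma_1$ inside $yH$, so it is forced into $T_1\cap yH$ and must be produced from $T_0$. The paper does this with $T_{0,ay}=\{a^{p-1}\}$, using $ay\cdot a^{p-1}=y$. Alternatively, $y^2a\in T_0$ with $(ay)^{-1}$ also works. The paper's route needs $a^{p-1}\in T_0$, and this is where the two residues differ:
\begin{itemize}
\item For $p\equiv 1\pmod 4$, $a^{p-1}$ is isolated in $\Sigma_0$. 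The only boundary effect is that the edges $y^2a^{p-1}\sim 1$ and $y^2a^{p-1}\sim a$ create a tree: the path $(ya^2,a,y^2a^{p-1},ya^{p-2})$ with pendant edges $\{a,y^2\}$ and $\{y^2a^{p-1},1\}$. This tree meets $yH$ twice, and no cycles arise anywhere.
\item For $p\equiv 3\pmod 4$, the component $(ya^{p-4},y^2a^{p-3},a^{p-1})$ of $\Sigma_0$ and the component $(y^2a^{p-4},a^{p-2},ya^{p-1})$ of $\Sigma_1$ each meet $yH$ exactly once, so they need representatives. Condition (i) of Lemma~\ref{sigma's-2} excludes their $y^2H$-vertices. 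One valid choice is $a^{p-1}$ and $a^{p-2}$. It keeps $T_0\cap yH=\emptyset$ and $T_1\cap yH=\{y\}$, so the same covering works.
\end{itemize}
This boundary verification is what the lemma actually asserts. Without it, the proposal is a plan rather than a proof.
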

\begin{proof}
We distinguish two cases according to the reside of $p$ modulo $4$. 
\medskip

\noindent{\bf Case~1.} $p \equiv 1\!\!\pmod 4$.\ Using~\eqref{eq:edges}, we compute 
that the components of $\Sigma_0$ are the paths:  
\[
(ya^{4i-1},y^2a^{4i},a^{4i+1},ya^{4i+2})\quad (i \in [1,(p-5)/4]),
\]  
and the tree made of the path $(ya^2,a,y^2a^{p-1},ya^{p-2})$ and the edges 
$\{a,y^2\}$ and $\{y^2a^{p-1},1\}$.
Clearly, there exists a reduced transversal $T_0$ of $\Sigma_0$ consisting of 
the isolated vertices of $\Sigma_0$. In particular, $T_0 \cap yH=\emptyset$ and 
$a^{p-1} \in T_0$. 

Then, for the components of $\Sigma_1$, we find that these are the paths: 
\[
(ya^{4i+1},y^2a^{4i+2},a^{4i+3},ya^{4i+4})\quad (i \in [0,(p-5)/4]).
\]  
There exists a reduced transversal $T_1$ of $\Sigma_1$ 
consisting of the isolated vertices of $\Sigma_1$, in particular, $T_0 \cap yH=\{y\}$. 
It follows that $\Sigma_0$ and $\Sigma_1$ satisfy the conditions in Lemma~\ref{sigma's-1} and also that the conditions in Lemma~\ref{sigma's-2} hold 
for $T_0$ and $T_1$.
\medskip

\noindent{\bf Case~2.} $p \equiv 3\!\!\pmod 4$.\ In this case, the components of $\Sigma_0$ are the paths:  
\[
(ya^{4i-1},y^2a^{4i},a^{4i+1},ya^{4i+2})\quad (i \in [1,(p-7)/4]),
\]  
the path $(ya^{p-4},y^2a^{p-3},a^{p-1})$, and the tree 
made of the path $(ya^2,a,y^2a^{p-1},ya^{p-2},a^{p-3})$ 
and the edges $\{a,y^2\}$ and $\{y^2a^{p-1},1\}$.
Again, there exists a reduced transversal $T_0$ of $\Sigma_0$ consisting of 
the isolated vertices of $\Sigma_0$. In particular, $T_0 \cap yH=\emptyset$ and 
$a^{p-1} \in T_0$.  

The components of $\Sigma_1$ are the paths:
\[
(ya^{4i+1},y^2a^{4i+2},a^{4i+3},ya^{4i+4})\quad (i \in [0,(p-7)/4]),
\]  
and the path $(y^2a^{p-4},a^{p-2},ya^{p-1})$. 
There exists a reduced transversal $T_1$ of $\Sigma_1$ 
consisting of the isolated vertices of $\Sigma_1$ plus $a^{p-1}$, in particular, 
$T_0 \cap yH=\{y\}$.  It follows that $\Sigma_0$ and $\Sigma_1$ satisfy the conditions in Lemma~\ref{sigma's-1} and also that the conditions in Lemma~\ref{sigma's-2} hold 
for $T_0$ and $T_1$.
\end{proof}

\begin{table}[t!]
\begin{center}
\renewcommand{\arraystretch}{1.25}
{\small
\begin{tabular}{|c|c|c|} \hline
$r$   & $\varepsilon$  &  The edges of $\Sigma_\varepsilon$  \\ \hline 
even &  $0$   &  
\begin{tabular}{ll}
$a^{(2i+1)s-1} \sim ya^{(2i+1)s}$ & ($i \in [0,(r-2)/2]$)\\ 
$ya^{2is-1} \sim y^2a^{2is}$ & ($i \in [0,(r-2)/2]$) \\
$y^2a^{(2i+1)s-1} \sim a^{(2i+2)s}$ & ($i \in [0,(r-4)/2]$) \\
\end{tabular} \\    \hline  
even &  $1$   &  
\begin{tabular}{ll}  
$a^{2is-1} \sim ya^{2is}$ &  ($i \in [0,(r-2)/2]$) \\ 
$ya^{(2i+1)s-1} \sim y^2a^{(2i+1)s}$ &  ($i \in [0,(r-2)/2]$) \\
$y^2a^{2is-1} \sim a^{(2i+1)s}$ &  ($i \in [0,(r-2)/2]$) \\
$y^2a^{(r-1)s+i} \sim a^{rs+i+1}$ & ($i \in [0,t-2]$) \\
$y^2a^{(r-1)s+i} \sim a^{rs+i}$ & ($i \in [0,t-1]$) \\ 
\end{tabular} \\ \hline 
odd &  $0$  &  
\begin{tabular}{ll}
$a^{(2i+1)s-1} \sim ya^{(2i+1)s}$ & ($i \in [0,(r-5)/2]$)\\ 
$a^{(r-1)s-1} \sim ya^{(r-1)s}$ & \\ 
$ya^{2is-1} \sim y^2a^{2is}$ & ($i \in [0,(r-3)/2]$) \\
$y^2a^{(2i+1)s-1} \sim a^{(2i+2)s}$ & ($i \in [0,(r-5)/2]$) \\
$y^2a^{(r-3)s+i} \sim a^{(r-2)s+i}$ & ($i \in [0,s-1]$) \\
$y^2a^{(r-3)s+i} \sim a^{(r-2)s+i+1}$ & ($i \in [0,s-2]$) \\
\end{tabular} \\  \hline  
odd &  $1$     &  
\begin{tabular}{ll}
$a^{2is-1} \sim ya^{2is}$ & ($i \in [0,(r-3)/2]$)\\ 
$ya^{(2i+1)s-1} \sim y^2a^{(2i+1)s}$ & ($i \in [0,(r-5)/2]$) \\
$ya^{(r-1)s-1} \sim y^2a^{(r-1)s}$ & \\
$y^2a^{2is-1} \sim a^{(2i+1)s}$ & ($i \in [0,(r-5)/2]$) \\
$y^2a^{(r-1)s+i} \sim a^{rs+i}$ & ($i \in [0,t-1]$) \\
$y^2a^{(r-1)s+i} \sim a^{rs+i+1}$ & ($i \in [0,t-2]$) \\
\end{tabular}
\\ \hline 
\end{tabular}}
\end{center} 
\caption{The edges of $\Sigma_0$ and $\Sigma_1$, where $y^3=a^s$ and 
$2 \le s \le (p-1)/2$.}
\label{tab1}
\end{table}

\begin{lem}\label{G4-s} 
Assume that $y^3=a^s$ or $a^{-s}$ for some $s \in [2,(p-1)/2]$. 
Let $r, s$ be the unique positive integers such that $p=rs+t$ and $t < s$, and 
let  
\[
\H=\begin{cases} 
\big\{a^{2is}, \ldots, a^{2is+s-1} \mid i \in [0,(r-2)/2]\big\} & \text{if}~
r \equiv 0\!\!\!\!\!\pmod 2, \\ 
\big\{a^{2is}, \ldots, a^{2is+s-1} \mid i \in [0,(r-3)/2]\big\}~\cup~ 
\{ a^{(r-2)s},\ldots,a^{(r-1)s-1}\}  & \text{if}~r \equiv 1\!\!\!\!\!\pmod 2
\end{cases}
\]
and $\Y=\H \cup y(Y \setminus \H)  \cup y^2\H$. 
Then the subgraphs $\Sigma [\Y]$ and $\Sigma[Y \setminus \Y]$ of $\G_4$ satisfy the 
conditions in Lemmas~\ref{sigma's-1} and \ref{sigma's-2}.
\end{lem}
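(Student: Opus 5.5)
The plan follows the pattern of Lemmas~\ref{G3} and~\ref{G4-1}. First compute the edge sets of $\Sigma_0=\Sigma[\Y]$ and $\Sigma_1=\Sigma[Y\setminus\Y]$ explicitly, then read off the component structure, and finally exhibit reduced transversals $T_0,T_1$ meeting the requirements of Lemma~\ref{sigma's-2}.

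\textbf{Step 1: symmetry and edge lists.} The inversion automorphism $g\mapsto g^{-1}$ of the cyclic group $Y$ preserves $\Sigma$. Replacing $y$ by $y^{-1}$ (equivalently, relabelling $a\mapsto a^{-1}$) should therefore reduce the case $y^3=a^{-s}$ to $y^3=a^{s}$, provided $\H$ is transformed appropriately. I expect this reduction to need a small check, since $\H$ is not symmetric under $a\mapsto a^{-1}$. If the check fails, I would treat both signs directly; the computations are analogous.

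Assuming $y^3=a^s$, use \eqref{eq:edges} restricted to $Y$ (with $\alpha=0$, dropping $x$- and $a$-edges, since $\Sigma=\cay(Y,\{y^{\pm1},(ay)^{\pm1}\})$). The three layers $H, yH, y^2H$ are joined by:
\begin{itemize}
\item $a^j\sim ya^j$ and $a^j\sim ya^{j+1}$;
\item $ya^j\sim y^2a^j$ and $ya^j\sim y^2a^{j+1}$;
\item $y^2a^j\sim a^{j+s}$ and $y^2a^j\sim a^{j+s+1}$.
\end{itemize}
$\H$ is a union of blocks $B_i=\{a^{2is},\dots,a^{2is+s-1}\}$ of $s$ consecutive exponents, alternating with non-blocks. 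Keeping only the edges with both ends in $\Y$ (respectively in $Y\setminus\Y$), the edges inside $H\cup yH$ and inside $yH\cup y^2H$ survive only at block boundaries, since they shift the exponent by $0$ or $1$. The $y^2H\to H$ edges shift by $s$ or $s+1$, so they connect a block to a non-block except at the boundaries. This should give exactly the edge lists of Table~\ref{tab1}, including the wrap-around terms involving the remainder $t$ (for the last partial block), and the special block $\{a^{(r-2)s},\dots,a^{(r-1)s-1}\}$ when $r$ is odd.

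\textbf{Step 2: components and the conditions of Lemma~\ref{sigma's-1}.} From Table~\ref{tab1}, most edges are isolated edges or short paths near block boundaries. The only potentially larger component is the ``zig-zag'' formed by the $y^2a^{\cdot}\sim a^{\cdot}$ edges in the last segment (the $t$ or $s$ family), which forms a path alternating between $y^2H$ and $H$. So every component is a tree, giving condition~(i). For (ii) and (iii), I will check that each component through a vertex of $yH$ contains at most two vertices of $yH$. Each $yH$ vertex has degree at most $2$ in $\Sigma_\varepsilon$, and the $yH$-vertices appear only in the first two edge families, which pair up disjointly. The zig-zag path avoids $yH$ entirely, except possibly at one endpoint.

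\textbf{Step 3: reduced transversals, the main obstacle.} Choose $T_\varepsilon$ to consist of one vertex from each tree component with at most one $yH$-vertex, taking that vertex in $H$ whenever possible. Then condition~(i) of Lemma~\ref{sigma's-2} is automatic for the $y^2H$-choices, which occur only in components avoiding $yH$, such as the zig-zag. The real work is condition~(ii). Each vertex of $T_{1-\varepsilon}\cap yH$ must be hit bijectively as $yh$ or $ayh$ with $h\in T_\varepsilon\cap H$, or as $y^{-1}h$ or $(ay)^{-1}h$ with $h\in T_\varepsilon\cap y^2H$.

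Ideally, as in Lemma~\ref{G4-1}, $T_{1-\varepsilon}\cap yH$ is very small, ideally empty or a single vertex like $y a^{0}$, and $T_\varepsilon\cap H$ contains the needed preimage. For each $ya^j\in T_{1-\varepsilon}$, I would match it with $a^j$ or $a^{j-1}$ in $H\cap T_\varepsilon$, using that exponents isolated in $\Sigma_{1-\varepsilon}$ lie strictly inside blocks, where their $H$-neighbours are isolated in $\Sigma_\varepsilon$. I expect the parity cases ($r$ even or odd) and the boundary block involving $t$ to need separate bookkeeping to guarantee that the assignment is injective.
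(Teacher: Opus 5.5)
Steps~1--2 are the routine computation behind Table~\ref{tab1}. Step~3 is the whole content of the lemma, and your plan for condition~(ii) of Lemma~\ref{sigma's-2} fails there. Two of your expectations are false.

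First, $T_0\cap yH$ is not small. A vertex $ya^j$ with $a^j\notin\H$ is isolated in $\Sigma_0$ unless $a^{j-1}\in\H$ or $a^{j+1}\in\H$, and an isolated vertex is forced into $T_0$.

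Second, the zig-zag does not avoid $yH$. In Case~1 its component carries two pendant $yH$-vertices. For $r$ even these are $ya^{(r-1)s-1}$, attached at $y^2a^{(r-1)s}$, and $ya^0$, attached at $a^{p-1}=a^{rs+t-1}$. So the zig-zag component is excluded from every reduced transversal and supplies no representative.

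Now take $r$ even and the last run $[(r-1)s,p-1]$ of exponents outside $\H$, which has length $s+t$.
\begin{itemize}
\item The vertices $ya^{(r-1)s+1},\dots,ya^{p-2}$ are isolated in $\Sigma_0$, so they must lie in $T_0$.
\item The vertex $ya^{p-1}$ is forced into $T_0$ by condition~(i), since its component is the edge $\{ya^{p-1},y^2a^0\}$.
\item On the other side, $a^{rs},\dots,a^{p-1}$ lie on the excluded zig-zag of $\Sigma_1$.
\end{itemize}
If you match each target $ya^j$ to $a^j$ or $a^{j-1}$ in $T_1\cap H$, only the $s$ sources $a^{(r-1)s},\dots,a^{rs-1}$ are available for $s+t-1$ targets. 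This is impossible once $t\ge2$. For example, with $p=11$ and $s=4$ the targets are $ya^5,\dots,ya^{10}$ and the sources are at most $a^4,\dots,a^7$.

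Your rule ``take the $H$-vertex whenever possible'' makes this worse. It picks $a^s$ from the component $\{y^2a^{p-1},a^s\}$ of $\Sigma_1$, after which $ya^{p-1}$ has no admissible source at all. Its only candidates are $a^{p-1}$ and $a^{p-2}$ (both on the zig-zag), $y^2a^{p-1}$ (not chosen), and $y^2a^0\notin\vertex(\Sigma_1)$.

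The missing idea is to cover this tail from $y^2H$.
\begin{itemize}
\item The vertices $y^2a^j$ with $j\in[(r-1)s+t,p-2]$ are isolated in $\Sigma_1$.
\item Together with $y^2a^{p-1}$, chosen instead of $a^s$, they give $s$ elements of $T_1\cap y^2H$. Under $y^{-1}$ these hit $ya^{(r-1)s+t},\dots,ya^{p-1}$.
\item The remaining tail vertices $ya^{(r-1)s+1},\dots,ya^{(r-1)s+t-1}$ are covered by $a^{(r-1)s+1},\dots,a^{(r-1)s+t-1}$ under $y$.
\end{itemize}
This is exactly the inclusion $T_0\cap yH\subseteq y(T_1\cap H)\cup y^{-1}(T_1\cap y^2H)$ that the paper checks by taking complements in $y\H_1$. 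The same kind of tail bookkeeping is needed for $r$ odd and for $y^3=a^{-s}$.

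The reduction you hope for in Step~1 is also not available.
\begin{itemize}
\item The map $g\mapsto g^{-1}$ exchanges $yH$ and $y^2H$, so it does not preserve the layering that $\Y$ depends on.
\item The relabellings of $(a,y)$ that keep the connection set in the form $\{y^{\pm1},(ay)^{\pm1}\}$ send $y^3=a^{\sigma}$ only to $y'^3=a'^{\sigma}$ or $y'^3=a'^{-(\sigma+3)}$. Neither takes $-s$ to $s$.
\end{itemize}
So Case~2, where the $y^2H\to H$ edges shift exponents by $-s$ and $-s+1$, has to be handled separately, as the paper does.
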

\begin{proof}
Let $\H_0=\H$ and $\H_1=H \setminus \H$. 
\medskip

\noindent{\bf Case~1.} $y^3=a^s$.\ We determined the edges of $\Sigma_0$ and $\Sigma_1$ by a straightforward computation;  the results are recorded in Table~\ref{tab1}.  One can quickly verify that $\Sigma_0$ and $\Sigma_1$ 
are pseudoforests which satisfy the conditions in Lemma~\ref{sigma's-1}. It remains to find suitable reduced transversals of $\Sigma_0$ and $\Sigma_1$, respectively. If $r$ is even, then define the subsets $T_0 \subset \Y$ and $T_1 \subset Y \setminus \Y$ as 
\begin{align*}
T_0=& 
\H_0~\cup~
y \big[ \H_1 \setminus (\{a^{(2i+1)s} \mid i \in [0,(r-2)/2]\} \big]~\cup \\&~
y^2\big[ (\H_0 \setminus \{a^{2si}, a^{(2i+1)s-1} \mid  i \in [0,(r-2)/2]\})~\cup~ 
\{a^{(r-1)s-1}\}\big], \\
T_1=& 
(\H_1 \setminus \{a^s, a^{rs+i} \mid i \in [0,t-1]\})~\cup~
y\big[\H_0 \setminus \{a^{(2is} \mid i \in [0,(r-2)/2]\} \big]~\cup \\ &~
y^2\big[\H_1 \setminus \big(\, \{a^{(2i+1)s}, a^{(2i+2)s-1} \mid i \in [0,(r-4)/2]\}~\cup~
\{ a^{(r-1)s+i}\mid i \in [1,t-1]\}\, \big) \big]; 
\end{align*}
and if $r$ is odd, then as 
\begin{align*}
T_0=&  
(\H_0 \setminus \{ a^{(r-2)s+i} \mid i \in [0,s-1]\})~\cup~
y\big[\H_1 \setminus  \{a^{(r-1)s}, a^{(2i+1)s} \mid i  \in [0,(r-5)/2]\} \big]~\cup \\ &~
y^2\big[ \H_0 \setminus (\{a^{2is}, a^{(2i+1)s-1} \mid i \in [0,(r-5)/2])\}~\cup~ 
\{a^{(r-3)s+i} \mid i\in [0,s-1]\}) \big],  \\
T_1=& 
(\H_1 \setminus \{a^s, a^{rs+i} \mid i \in [0,t-1]\})~\cup~
y\big[\H_0 \setminus \{a^{(2is} \mid i \in [0,(r-3)/2]\} \big]~\cup \\ &~
y^2\big[ \H_1 \setminus (\{a^{(r-4)s}, a^{(2i+1)s}, a^{(2i+2)s-1} \mid i \in [0,(r-7)/2])\}~\cup~ 
\{a^{(r-1)s+i} \mid i\in [0,t-1]\}) \big]. 
\end{align*}

We claim that $T_0$ is a reduced transversal of $\Sigma_0$ and $T_1$ is a reduced transversal of 
$\Sigma_1$ and that the conditions in Lemma~\ref{sigma's-2} hold for $T_0$ and $T_1$. 
We give a proof only in the case where $r$ is even, as the argument in case where $r$ is odd goes in the same way. 

It is easy to see that $T_0$ is a reduced transversal of $\Sigma_0$ and $T_1$ is a transversal of 
$\Sigma_1$ and the condition in (i) of Lemma~\ref{sigma's-2} holds. 
As $T_0 \cap H=\H_0$, the condition in (ii) also holds if $\varepsilon=1$.
To derive that the condition also holds for $\varepsilon=0$, it is sufficient to show that
\[
T_0 \cap yH \subseteq y(T_1 \cap H) \cup  y^{-1}(T_1 \cap y^2H). 
\]
Take the complement of both sides in $\H_1$. Then in the left side we get 
$\{ya^{(2i+1)s} \mid i \in [0,(r-2)/2]\}$, while in the right side we get 
\[
\{ya^s, ya^{rs+i} \mid i \in [0,t-1]\}~\cap~\{ya^{(2i+1)s}, ya^{(2i+2)s-1}, 
ya^{(r-1)s+j} \mid i \in [0,(r-4)/2], j \in [1,t-1]\}.
\]
Now, as $t < s$, the above intersection is equal to $\{ya^s\}$, and this shows that 
the condition in (ii) of Lemma~\ref{sigma's-2} also holds. 
\medskip

\noindent{\bf Case~2.} $y^3=a^{-s}$.\ The edges of $\Sigma_0$ and $\Sigma_1$ having an end-vertex in $yH$ are the same as in the previous case, hence these are listed in Table~1. 

Let $r$ be even. Then the remaining edges of $\Sigma_0$ are 
$y^2a^{(2i+1)s-1} \sim a^{2is}$ ($i \in [0,(r-2)/2]$), and the remaining edges of 
$\Sigma_1$ are 
\[
\begin{tabular}{ll}
$y^2a^{(2i+2)s-1} \sim a^{(2i+1)s}$ & ($i \in [0,(r-4)/2]$), \\
$y^2a^{rs+i-1} \sim a^{(r-1)s+i}$ & ($i \in [0,t]$), \\
$y^2a^{rs+i} \sim a^{(r-1)s+i}$ & ($i \in [0,t-1]$). 
\end{tabular}
\]
We leave to the reader to check that the subset $T_0 \subset \Y$ defined  as in 
Case~1 for $r$ even and the subset $T_1 \subset Y \setminus \Y$ defined as  
\begin{align*}
T_1=& 
(\H_1 \setminus \{a^{(r-1)s+i} \mid i \in [0,t]\})~\cup~
y\big[\H_0 \setminus \{a^{(2is} \mid i \in [0,(r-2)/2]\} \big]~\cup \\ &~
y^2\big[ \H_1 \setminus \big(\, \{a^{(2i+1)s}, a^{(2i+2)s-1} \mid i \in [0,(r-4)/2]\}~\cup~
\{ a^{(r-1)s}, a^{rs+i}\mid i \in [1,t-1]\}\, \big) \big]; 
\end{align*}
are reduced transversals of $\Sigma_0$ and $\Sigma_1$, respectively, and that the conditions in Lemma~\ref{sigma's-2} hold for $T_0$ and $T_1$. 

Let $r$ be odd. Then the remaining edges of $\Sigma_0$ are 
\[
\begin{tabular}{ll}
$y^2a^{(2i+1)s-1} \sim a^{2is}$ & ($i \in [0,(r-5)/2]$), \\
$y^2a^{(r-2)s+i-1} \sim a^{(r-1)s+i}$ & ($i \in [0,s]$), \\
$y^2a^{(r-2)s+i} \sim a^{(r-1)s+i}$ & ($i \in [0,s-1]$),
\end{tabular}
\]
and the remaining edges of 
$\Sigma_1$ are 
\[
\begin{tabular}{ll}
$y^2a^{(2i+2)s-1} \sim a^{(2i+1)s}$ & ($i \in [0,(r-5)/2]$), \\
$y^2a^{rs+i-1} \sim a^{(r-1)s+i}$ & ($i \in [0,t]$), \\
$y^2a^{rs+i} \sim a^{(r-1)s+i}$ & ($i \in [0,t-1]$). 
\end{tabular}
\]
We leave to the reader to check that the subsets $T_0 \subset \Y$ 
and $T_1 \subset Y \setminus \Y$ defined  as 
\begin{align*}
T_0=&  
(\H_0 \setminus \{ a^{(r-3)s+i} \mid i \in [0,s]\})~\cup~
y\big[\H_1 \setminus  \{a^{(r-1)s}, a^{(2i+1)s} \mid i  \in [0,(r-5)/2]\} \big]~\cup \\ &~
y^2\big[ \H_0 \setminus (\{a^{2is}, a^{(2i+1)s-1} \mid i \in [0,(r-5)/2])\}~\cup~ 
\{a^{(r-2)s+i} \mid i\in [0,s-1]\}) \big],  \\
T_1=& 
(\H_1 \setminus \{a^{(r-1)s+i} \mid i \in [0,t]\})~\cup~
y\big[\H_0 \setminus \{a^{(2is} \mid i \in [0,(r-3)/2]\} \big]~\cup \\ &~
y^2\big[ \H_1 \setminus (\{a^{(r-1)s}, a^{(2i+1)s}, a^{(2i+2)s-1} \mid i \in [0,(r-5)/2])\}~\cup~ 
\{a^{rs+i} \mid i\in [0,t-1]\}) \big]
\end{align*}
are reduced transversals of $\Sigma_0$ and $\Sigma_1$, respectively, and that the conditions in Lemma~\ref{sigma's-2} hold for $T_0$ and $T_1$. 
\end{proof}
%---------------------------------------------------------------------------------------------------%

\end{document}